\documentclass[11pt]{amsart}

\usepackage{amsmath,amsfonts,amssymb,latexsym,url,graphicx}
\usepackage{tikzsymbols}
\usepackage[dvipsnames]{xcolor}
\usepackage{hyperref}
\usepackage{txfonts,pifont,bbding,pxfonts}
\usepackage{manfnt}
\usepackage[active]{srcltx}
\usepackage{cases}
\usepackage{wasysym,pstricks,enumerate,subfigure}
\usepackage{coffeestains}
\usepackage{lscape,color}
\usepackage{soul}
\usepackage[none]{hyphenat}
\usepackage[mathscr]{euscript}
\usepackage{wrapfig}
\usepackage{enumitem}
\usepackage{tikz}
\usepackage{pgfplots}
\usepackage[numbers]{natbib}

\pgfplotsset{compat=1.18}

\usepackage[T1]{fontenc}
\usepackage{xcolor} 

\usepackage{booktabs}

\newcommand{\R}{{\mathbb R}} 

\newtheorem{theorem}{Theorem}[section]
\newtheorem{corollary}[theorem]{Corollary}
\newtheorem{lemma}[theorem]{Lemma}
\newtheorem{proposition}[theorem]{Proposition}
\newtheorem{definition}[theorem]{Definition}
\newtheorem{remark}[theorem]{Remark}

\usepackage[x11names,svgnames]{xcolor}

\newcommand{\edits}[1]{\textcolor{Blue}{#1}}

\usepackage{lineno}
\usepackage{mathtools}

\usepackage{tikz}
\usepackage{circuitikz}
\usetikzlibrary{3d}
\usepackage{float}
\title{Functional Ordinary Differential Equations and the Design of Dichromatic Lenses}
\author[]{Mohammad Tarek Al Masri, St\'ephane Najjar, Ahmad Sabra*, Maya Said,\\ Muna Sattouf }
\thanks{*\, Corresponding Author}
\address{Department of Mathematics \\ American University of Beirut\\ Beirut, Lebanon}

\thanks{2020 AMS Math Subject Classification: 78A05, 78A46, 93C23.}
\email{maa369@mail.aub.edu, sfn15@mail.aub.edu, asabra@aub.edu.lb, mks52@mail.aub.edu, munasattouf@gmail.com}
\thanks{Keywords: Functional Differential Equations, Chromatic Aberration, Inverse Problem, Geometric Optics.}
\date{\today}

\begin{document}
\maketitle

\definecolor{newpurple}{RGB}{158, 84,197}

\begin{abstract}
This paper establishes an existence and uniqueness theorem for a class of functional ordinary differential equations under assumptions weaker than those found in the literature. We then consider the problem of designing a lens that refracts rays of two distinct frequencies emitted from a point source into a common collimated beam. We derive a system of functional ordinary differential equations whose solvability characterizes the existence of such lenses. Using the existence theorem, we prove that such a lens exists in two dimensions. We then extend the corresponding result to three dimensions.

\end{abstract}
    

\tableofcontents

\section{Introduction}

Inverse problems in Geometric Optics aim to construct optical systems with prescribed reflective or refractive properties. Such systems are either designed for  imaging purposes, where the goal is to create a certain image \cite{SIAMGUTSAB}, \cite{Verma2026}, or for non-imaging aims, which are more concerned on the direction of the light leaving the optical system \cite{Gutierrez:14} 
and on the illumination distribution received at the target \cite{Oliker2019}, \cite{GutierrezHuang2009}, \cite{GutierrezHuang2014}. Such inverse problems have attracted considerable mathematical research and have led to the study of nonlinear partial differential equations  including Monge–Amp\`ere type equations \cite{MerigotThibert2021}, \cite{guan1998monge}, \cite{CaffarelliOliker2008}, and in certain settings systems of functional differential equations \cite{FriedmanMcLeod1987}, \cite{rogers1988existence}, \cite{van1992lens}, \cite{van1994mathematical}, \cite{GutierrezSabra2020}.

One of the fundamental difficulties in  optical design is that the refractive index of a material depends on the  frequency of light \cite{JenkinsWhite2001}. Consequently, rays of different colors  refract at different angles when passing from one medium to another, causing chromatic aberration. In practice, this effect is usually corrected by the use of non-conventional materials or metasurfaces specifically engineered to control the frequency-dependent refraction \cite{Gutierrez:18}, \cite{Chen2018}, \cite{GutierrezSabra2021}, or by combining several lens elements made from different materials, such as achromatic doublets \cite{SunChuTien2009}, \cite{Hua2017}. 
This motivates the question we study in this paper which is whether a homogeneous single-element lens can be designed to refract rays emitted from a point source and composed of two frequencies into a common collimated beam.

The design of such a lens can be formulated as an inverse problem. Under suitable assumptions, in two dimensions, the unknown refracting faces of the lens satisfy a system of nonlinear functional differential equations (FDEs) of the form
$$F(t,Z(t),Z(Z_1(t)), Z'(t),Z'(Z_1(t)))={\bf 0}.$$
Unlike the classical delay or neutral FDEs \cite{HaleLunel1993}, the systems of interest in this paper belong to a class of equations in which the unknown functions are arguments of other unknown functions. This 
prevents the direct application of standard existence and uniqueness results from the theory of FDEs.

  FDEs for inverse problems in Geometric Optics first appeared in \cite{FriedmanMcLeod1987}, then in   \cite{rogers1988existence}, \cite{van1992lens}, and more recently in \cite{GutierrezSabra2020}. These papers aim to design two dimensional lenses that perfectly focus rays composed of multiple wavelengths into one  point in the near field case or into a common direction in the far field case. 

The existence theory in \cite{rogers1988existence} and \cite{GutierrezSabra2020} relies on the construction of a norm under which the Lipschitz constants associated with two derivative variables satisfy a contraction condition. Controlling both constants is a challenge in this case and is not always possible \cite[Corollary 4.6]{GutierrezSabra2020}. In this paper, we prove an existence and uniqueness result that is coordinate free and does not depend on the choice of such a norm, Theorem \ref{thm:Main Theorem}. We then revisit the dichromatic lens problem in \cite{GutierrezSabra2020} and show that it can be formulated as a system of three FDEs, as opposed to five equations in \cite{GutierrezSabra2020}. This reduction, together with Theorem \ref{thm:Main Theorem}, yields a stronger existence result for dichromatic lenses in two dimensions by enlarging the class of admissible configurations for which local solutions exist, see Remark \ref{rmk:admissibility of p} and \cite[Remark 5.14]{GutierrezSabra2020}.

The paper is organized as follows. In Section \ref{sec: system of functional ODEs}, we establish an existence and uniqueness result for a system of FDEs of the form \eqref{eq:system} using a fixed-point argument and the implicit function theorem. This result is of independent mathematical interest and could be applicable beyond the dichromatic problem considered here. After a preliminary review of Snell's law in Section \ref{subsub:Snell} and of the monochromatic problem \cite{Gutierrez2013} in Section \ref{prelim:monochromatic}, we introduce the dichromatic problem in Section \ref{sec:Setup}. We show in Theorems \ref{eq:dichromatic theorem} and \ref{thm:converse} that the existence of solutions to the 2D-dichromatic problem is equivalent to the existence of solutions to an FDE of the form studied in Section \ref{sec: system of functional ODEs}, see Definition \ref{def:F_i}. We then apply Theorem \ref{thm:Main Theorem} in Section \ref{sec:existence} and show the existence of a solution to the two dimensional dichromatic problem. We in fact show that the obtained lens is symmetric which enables us to conclude, in Section \ref{subsec:3D}, the existence of a lens solving the dichromatic problem in three dimensions. Finally, in Section \ref{subsec:numerical}, we implement a numerical construction of the solution lens.

\section{System of Functional Differential Equations}\label{sec: system of functional ODEs}
In this section, we will use the following notation.
\begin{itemize}
    \item For $x\in \mathbb R^n$, we write $x=(x_1,\cdots,x_n)$ with $x_i\in \mathbb R$ the $i-$th coordinate of $x$.
    \item For $U$ open in $\mathbb R^{m+n}$, and $H:U\mapsto \mathbb R^n$ a $C^1$ map with $H(x,y)=(H_1(x,y),\cdots, H_n(x,y))$, $x=(x_1,\cdots,x_m)\in \mathbb R^m$ and $y=(y_1,\cdots,y_n)\in \mathbb R^n$; $\nabla_{y}H=\left( \frac{\partial H_j}{\partial y_k}\right)_{1\leq j,k\leq n}$ denotes the $n\times n$ matrix derivative of $H$ with respect to $y$, and $DH=\begin{pmatrix}\nabla_xH|\nabla_yH\end{pmatrix}$ denotes the total derivative of $H$.
    \item Let $\mathcal{M} \in M_{n \times n}(\mathbb R)$. The spectral radius $\; R_{\mathcal{M}} = \max\{ \, |\lambda| : \lambda$ is an eigenvalue of $\mathcal{M}\, \}$.
    \item Let $\|\cdot\|$ be a norm in $\mathbb R^n$, the induced matrix norm on $M_{n\times n}(\mathbb R)$ is defined as follows $$\left|\|A\|\right|=\max\{\|Av\|: v\in \mathbb R^n, \|v\|=1\},\qquad \qquad A\in M_{n\times n}(\mathbb R).$$
   \end{itemize}

Let $F$ be a map defined in an open domain $U \subseteq \mathbb{R}^{4n+1}$ with values in $\mathbb{R}^n$. We write $$F:=F(X) = (F_1(X), F_2(X), \cdots , F_n(X))$$
with $X = (t, \alpha,\beta,\zeta,\xi) \in U,$ $t\in \mathbb{R}$, and $\alpha,\beta,\zeta,\xi\in \R^n$. 
The goal of this section is to study the existence and uniqueness of $C^1$ solutions $Z(t)=(Z_1(t),Z_2(t),\cdots,Z_n(t))$ defined in a neighborhood of $t=0$ to the following system of FDEs
\begin{equation}\label{eq:system}
    \begin{cases}
    F(t, Z(t), Z(Z_1(t)),Z'(t),Z'(Z_1(t))) = {\bf 0}\\
    Z(0) = {\bf 0}
    \end{cases},
\end{equation}
where ${\bf 0}$ denotes the zero vector in $\R^n$.
\begin{remark}\label{rmk:necessary condition}
  Notice that if $Z$ solves \eqref{eq:system} in $[-t_0,t_0]$, then $(t,Z(t),Z(Z_1(t)),Z'(t),Z'(Z_1(t)))$ must belong to $U$ for every $t$ and $F(0,{\bf 0}, {\bf 0}, Z'(0),Z'(0))={\bf 0}$. Therefore, a necessary condition for the solvability of \eqref{eq:system} is the existence of a vector ${\bf p}\in \R^n$ such that $(0,{\bf 0,0,p,p})\in U$ and
$F(0,{\bf 0,0},{\bf p},{\bf p})={\bf 0}.$   
\end{remark}
With the above setting, we show the main result of this section.
\begin{theorem}\label{thm:Main Theorem}
Let ${\bf p}=(p_1,p_2,\cdots,p_n)\in \mathbb R^n$ be such that $P:=(0,{\bf 0}, {\bf 0}, {\bf p}, {\bf p})\in U$ and $F(P)={\bf 0}$. Assume $F$ is $C^1$ in a neighborhood of $P$ contained in $U$. If
\begin{enumerate}
\item $|p_1|<1$,
\item  $\nabla_{\zeta}F(P)$ is invertible,
\item The spectral radius $R_{-[\nabla_{\zeta}F(P)]^{-1}\nabla_{\xi}F(P)}<1$,
\end{enumerate}
then the system \eqref{eq:system} has a unique $C^1$ solution $Z$ in a neighborhood of $t=0$ with $Z'(0)={\bf p}$.
\end{theorem}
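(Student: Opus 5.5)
Following the abstract's description, the proof will combine the implicit function theorem with a fixed-point argument. First I solve the equation for the derivative variable $\zeta$. Since $F(P)=\mathbf 0$ and $\nabla_\zeta F(P)$ is invertible, the implicit function theorem gives a neighborhood $V$ of $(0,\mathbf 0,\mathbf 0,\mathbf p)$ in $\R^{3n+1}$, a neighborhood $W$ of $\mathbf p$, and a $C^1$ map $G:V\to W$ with $G(0,\mathbf 0,\mathbf 0,\mathbf p)=\mathbf p$. On $V\times W$, the relation $F(t,\alpha,\beta,\zeta,\xi)=\mathbf 0$ holds if and only if $\zeta=G(t,\alpha,\beta,\xi)$. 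Moreover,
$$\nabla_\xi G(0,\mathbf 0,\mathbf 0,\mathbf p)=-[\nabla_\zeta F(P)]^{-1}\nabla_\xi F(P)=:M.$$
Consequently, for $C^1$ functions $Z$ with $Z(0)=\mathbf 0$ and $Z'$ close to $\mathbf p$, system \eqref{eq:system} is equivalent to the functional ODE $Z'(t)=G(t,Z(t),Z(Z_1(t)),Z'(Z_1(t)))$. Setting $w=Z'$, this is in turn equivalent to a fixed-point equation $w=\mathcal T w$, where
$$(\mathcal T w)(t)=G\Big(t,\textstyle\int_0^t w,\ \int_0^{Z_1(t)} w,\ w(Z_1(t))\Big),\qquad Z_1(t)=\int_0^t w_1 .$$

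Next I choose a norm adapted to $M$, which is where condition (3) enters. Since $R_M<1$, there is a norm $\|\cdot\|$ on $\R^n$ whose induced matrix norm satisfies $\left|\|M\|\right|\le\rho<1$; this is a standard construction, for instance via a Jordan form with small off-diagonal entries. By continuity of $\nabla_\xi G$, after shrinking $V\times W$ we get $\left|\|\nabla_\xi G\|\right|\le\rho'<1$ there, along with bounds $L$ on the $t,\alpha,\beta$ derivatives. Because the statement only involves the spectral radius, the result does not depend on the choice of norm.

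Then I set up the space. Let $X_\delta$ be the set of continuous $w:[-\delta,\delta]\to\R^n$ with $w(0)=\mathbf p$ and $\sup\|w-\mathbf p\|\le\varepsilon$, equipped with the sup norm built from $\|\cdot\|$; this is a complete metric space. Condition (1), $|p_1|<1$, is used here. Shrinking $\varepsilon$ so that $|p_1|+\varepsilon<1$ gives $|Z_1(t)|\le(|p_1|+\varepsilon)|t|<|t|\le\delta$, so $Z_1$ maps $[-\delta,\delta]$ into itself and $w(Z_1(t))$ is defined. The bound $|Z_1(t)|<|t|$ also keeps all arguments inside $V\times W$ for $\delta$ small. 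Self-mapping follows from $\mathcal T w(0)=G(0,\mathbf 0,\mathbf 0,\mathbf p)=\mathbf p$ together with continuity.

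The main obstacle is the contraction estimate, because of the term $w(Z_1(t))$: the composition $w\circ Z_1$ depends on $w$ also through $Z_1$. Comparing $w$ and $\tilde w$ gives
$$\|w(Z_1)-\tilde w(\tilde Z_1)\|\le\|w-\tilde w\|_\infty+\|w(Z_1)-w(\tilde Z_1)\|,$$
and the second term requires a modulus of continuity for $w$. I would therefore restrict $X_\delta$ to functions that are Lipschitz with a fixed constant $K$. This set is closed under uniform convergence, hence complete. The $K$-Lipschitz property must be preserved by $\mathcal T$: differentiating formally, the Lipschitz constant of $\mathcal T w$ is at most $L(1+\cdots)+\rho'K|Z_1'|\le C+\rho'K$, which is $\le K$ once $K\ge C/(1-\rho')$. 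With this in hand, $\|w(Z_1)-w(\tilde Z_1)\|\le K\delta\|w-\tilde w\|_\infty$, and the integral terms are $O(\delta)\|w-\tilde w\|_\infty$. Altogether, $\|\mathcal Tw-\mathcal T\tilde w\|_\infty\le(\rho'+C'\delta)\|w-\tilde w\|_\infty$, which is a contraction for small $\delta$. Banach's fixed point theorem then yields a unique $w$, and $Z=\int_0^t w$ is $C^1$, solves \eqref{eq:system}, and has $Z'(0)=\mathbf p$.

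For uniqueness among all $C^1$ solutions with $Z'(0)=\mathbf p$: any such solution has $Z'$ near $\mathbf p$ on a small interval, so it satisfies the $G$-equation there. Its Lipschitz regularity is not a priori given, so I would run the contraction instead in the space of continuous functions, using a uniform modulus of continuity of the given solution on small intervals. Alternatively, I would show directly that $\sup_{|t|\le s}\|w-\tilde w\|$ satisfies a self-bounding inequality with factor less than $1$, which forces the two solutions to coincide near $0$.
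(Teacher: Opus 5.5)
Your existence argument is essentially the paper's proof. It uses the same implicit-function reduction to $Z'=G(t,Z,Z\circ Z_1,Z'\circ Z_1)$. It uses the same Householder-type norm, which makes $G$ a contraction in $\xi$ near the base point. It uses the same device of imposing a uniform Lipschitz bound $K\ge L(1+2M)/(1-\rho')$ on $Z'$, with $M$ a bound for $\|G\|$, so that the composition term is controlled. And it arrives at the same contraction factor $\rho'+O(\delta)$. Working with $w=Z'$ in the sup norm is equivalent to the paper's $C^1$ norm on $Z$, since $Z(0)=\mathbf 0$.

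The one step that is not yet a proof is uniqueness. Your first suggestion, a contraction among merely continuous functions using a modulus of continuity of the given solution, does not close. The term $\|w(Z_1(t))-w(\tilde Z_1(t))\|$ is then bounded only by $\sigma_w(|Z_1(t)-\tilde Z_1(t)|)$, which is not linear in $\|w-\tilde w\|_\infty$. Either of the following two fixes works.

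(a) The route implicit in the paper's Step 4. On a small interval $[-s,s]$, a $C^1$ solution $W$ with $W'(0)=\mathbf p$ satisfies $W'=G(t,W,W\circ W_1,W'\circ W_1)$, $|W_1'|\le 1$ and $\|W'\|\le M$. Hence the modulus of continuity $\sigma(h)$ of $W'$ on $[-s,s]$ satisfies $\sigma(h)\le L(1+2M)h+\rho'\sigma(h)$, so $\sigma(h)\le Kh$ because $\sigma(h)$ is finite. Thus $W'$ is automatically $K$-Lipschitz, $W$ belongs to the fixed-point space, and uniqueness of the fixed point applies.

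(b) Your self-bounding inequality, provided you split the composition term through the constructed solution $\tilde w$, which is $K$-Lipschitz:
$\|w(Z_1)-\tilde w(\tilde Z_1)\|\le\|w(Z_1)-\tilde w(Z_1)\|+\|\tilde w(Z_1)-\tilde w(\tilde Z_1)\|\le\|w-\tilde w\|_\infty+K|Z_1-\tilde Z_1|$.
This yields $\sup_{[-s,s]}\|w-\tilde w\|\le(\rho'+C's)\sup_{[-s,s]}\|w-\tilde w\|$, and only one of the two functions needs to be Lipschitz.

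In either version you also use $|Z_1(t)|\le|t|$. This ensures the equation on $[-s,s]$ involves only values on $[-s,s]$, so the constructed solution restricts to a solution on the smaller interval.
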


We proceed through several steps. 

\paragraph{\bf Step 1.} 

Since $\nabla_{\zeta}F(P)$ is invertible, by the Implicit Function Theorem, 
there exists a neighborhood $V\subseteq U$ of 
$P$, a 
neighborhood $\mathcal O$ of ${\bf p}$ in $\mathbb R^n$, a neighborhood  $\mathcal W$ of $(0,{\bf 0},
{\bf 0}, {\bf p})$ in $\mathbb R^{3n+1}$, and 
a $C^1$ map $H: \mathcal W\mapsto \mathcal O$ such that
for every $(t,\alpha,\beta,\xi)\in \mathcal W$, 
\begin{equation}\label{eq:Implict}
F(t,\alpha,\beta,H(t,\alpha,\beta,\xi),\xi)={\bf 0}.
\end{equation}
Moreover, for every $(t,\alpha,\beta,\zeta,\xi) \in V$ satisfying $F(t,\alpha,\beta,\zeta,\xi)={\bf 0}$, $(t,\alpha,\beta,\xi)\in \mathcal W$ and $\zeta=H(t,\alpha,\beta,\xi)\in \mathcal O.$ 

We show in this step that, locally, \eqref{eq:system} is equivalent to the system 
\begin{equation}\label{eq:simplified system}
\begin{cases}
Z'(t)=H(t,Z(t),Z(Z_1(t)),Z'(Z_1(t)))\\
Z(0)={\bf 0}
\end{cases}
\end{equation}

In fact, if $Z$ is a $C^1$ solution to \eqref{eq:simplified  system} in a neighborhood of $t=0$ with $Z'(0)={\bf p}$, then from \eqref{eq:Implict}, $Z$ solves \eqref{eq:system}.

Conversely, assume $Z$ solves \eqref{eq:system} with $Z'(0)={\bf p}$, then there exists $t_0>0$ such that
$(t,Z(t),Z(Z_1(t)),Z'(t),Z'(Z_1(t)))\in V$ for $t\in [-t_0,t_0]$. From the uniqueness of the implicit function satisfying \eqref{eq:Implict}, we deduce that $Z$ solves \eqref{eq:simplified system}.\\
\paragraph{\bf Step 2.} In this step, we introduce a particular norm on $\mathbb R^n$ and derive some Lipschitz properties for the map $H$. We use the notation $\hat {P}=(0,{\bf 0, 0, p})\in \mathcal W$. 

Differentiating \eqref{eq:Implict} with respect to $\xi$, we get that
    $$\nabla_{\xi}H(\hat P)=-[\nabla_{\zeta}F(P)]^{-1}\nabla_{\xi}F(P).$$

From assumption $(3)$ of the Theorem, we obtain that $R_{\nabla_{\xi}H(\hat P)}<1$, then by Householder's theorem \cite[Chapter 7]{Serre}
, there exists a norm $\|\cdot \|_0$ on $\mathbb R^n$ such that 

\begin{equation}\label{eq:norm<1}
\left|\left\|\nabla_{\xi}H(\hat {P})\right\|\right|_0<1. 
\end{equation}

For $\varepsilon>0$, define the closed ball
$$B_{\varepsilon}(\hat {P})=\left\{\hat X=(t,\alpha,\beta,\xi):|t|+\|\alpha\|_0+\|\beta\|_0+\|\xi-{\bf p}\|_0\leq \varepsilon\right\}.$$
We choose $\varepsilon$ small enough so that $B_{\varepsilon}(\hat{P})\subseteq \mathcal W$.

Since $H\in C^1(B_{\varepsilon}(\hat {P}))$, there exists a constant $L>0$ such that
\begin{equation} \label{Lipschitz H}
\left\|H(\bar{t},\bar{\alpha},\bar{\beta},\xi)-H(t,\alpha,\beta,\xi)\right\|_0\leq L \left(|\bar t-t|+\left\|\bar{\alpha}-\alpha\right\|_0+\left\|\bar {\beta}-\beta\right\|_0\right)\end{equation}
for every $ (\bar{t},\bar{\alpha},\bar{\beta},\xi),(t,\alpha,\beta,\xi) \in B_{\varepsilon}(\hat{P}).$ Moreover we have the following Proposition.

\begin{proposition}\label{prop:Contraction}
For all $(t,\alpha,\beta,\bar\xi)$ and $(t,\alpha,\beta,\xi)$ in $B_{\varepsilon}(\hat {P})$

$$
\left\|H\left(t,\alpha,\beta,\bar{\xi}\right)-H\left(t,\alpha,\beta,\xi\right)\right\|_0\leq \left(\max_{\hat X\in B_{\varepsilon}(\hat {P})} \left|\left\|\nabla_{\xi}H({\hat X})\right\|\right|_0\right)
\left\| \bar{\xi}-\xi\right\|_0.
$$
\end{proposition}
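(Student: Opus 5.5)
This is a mean value inequality in the $\xi$-variable, measured in the norm $\|\cdot\|_0$ and its induced matrix norm. The plan is to integrate the derivative along the segment joining $\xi$ and $\bar\xi$.

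\textbf{Step (a): the segment stays in the ball.} Fix $(t,\alpha,\beta)$ and set $\xi(s)=\xi+s(\bar\xi-\xi)$ for $s\in[0,1]$. The set $B_{\varepsilon}(\hat P)$ is a closed ball for the norm $|t|+\|\alpha\|_0+\|\beta\|_0+\|\xi-{\bf p}\|_0$ on $\mathbb R^{3n+1}$, so it is convex. Since both endpoints $(t,\alpha,\beta,\xi)$ and $(t,\alpha,\beta,\bar\xi)$ lie in it, so does every point $(t,\alpha,\beta,\xi(s))$. In particular the whole segment lies in $\mathcal W$, where $H$ is $C^1$.

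\textbf{Step (b): fundamental theorem of calculus.} Define $g(s)=H(t,\alpha,\beta,\xi(s))$. This is a $C^1$ map $[0,1]\to\mathbb R^n$, and by the chain rule $g'(s)=\nabla_\xi H(t,\alpha,\beta,\xi(s))\,(\bar\xi-\xi)$. Hence
\[
H(t,\alpha,\beta,\bar\xi)-H(t,\alpha,\beta,\xi)=\int_0^1 \nabla_\xi H(t,\alpha,\beta,\xi(s))\,(\bar\xi-\xi)\,ds .
\]

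\textbf{Step (c): estimate the integral.} For a continuous vector-valued integrand we have $\|\int_0^1 v(s)\,ds\|_0\le\int_0^1\|v(s)\|_0\,ds$. One way to see this is to approximate by Riemann sums and apply the triangle inequality; another is to use a norming functional of $\|\cdot\|_0$. By the definition of the induced norm, $\|\nabla_\xi H(\hat X)\,w\|_0\le \left|\left\|\nabla_\xi H(\hat X)\right\|\right|_0\|w\|_0$. Bounding the integrand pointwise by the supremum over $B_\varepsilon(\hat P)$ then gives the claimed inequality.

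\textbf{Step (d): the maximum is attained.} The supremum is indeed a maximum. The map $\hat X\mapsto\left|\left\|\nabla_\xi H(\hat X)\right\|\right|_0$ is continuous, because $H$ is $C^1$ and any matrix norm is continuous. The ball $B_\varepsilon(\hat P)$ is compact, being closed and bounded in finite dimensions, so the maximum exists.

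There is no real obstacle here. The only points needing care are the convexity of the ball, so that the segment stays where $H$ is defined, and the integral triangle inequality for a general norm. Both are standard.
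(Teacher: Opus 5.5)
Your proof is correct and follows the paper's argument: you apply the fundamental theorem of calculus along the segment from $\xi$ to $\bar\xi$, use the integral triangle inequality and the induced-norm bound on $\nabla_\xi H$, and then bound by the maximum over $B_{\varepsilon}(\hat P)$. Your explicit checks that the ball is convex (so the segment stays in the domain) and that the maximum is attained are details the paper leaves implicit.
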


\begin{proof}

Let $(t,\alpha,\beta,\bar{\xi}),(t,\alpha,\beta,\xi)$ in $ B_\varepsilon(\hat P)$, by the Fundamental Theorem of Calculus 
    \begin{align*}
        H(t,\alpha,\beta,\bar\xi) - H(t,\alpha,\beta,\xi)  & =\int_0^1DH\left((1-s)(t,\alpha,\beta,\xi) +s(t,\alpha,\beta,\bar{\xi})\right)(0,\mathbf{0},\mathbf{0},\bar\xi-\xi)^tds\\
        & = \int_0^1\nabla_{\xi}H\left((1-s)(t,\alpha,\beta,\xi) +s(t,\alpha,\beta,\bar{\xi})\right)(\bar{\xi}-\xi)^t\, ds.
    \end{align*}
Therefore,
    \begin{align*}
        \|H(t,\alpha,\beta,\bar{\xi}) - H(t,\alpha,\beta,\xi)\|_0&\leq \int_0^1\left\|\nabla_{\xi}H\left((1-s)(t,\alpha,\beta,\xi) +s(t,\alpha,\beta,\bar{\xi})\right)(\bar{\xi}-\xi)^t\right\|_0\, ds
        \\ &\leq \int_0^1\left|\left\|\nabla_{\xi}H\left((1-s)(t,\alpha,\beta,\xi) +s(t,\alpha,\beta,\bar{\xi}\right)\right\|\right|_0\, \|\bar \xi-\xi\|_0 \, ds\\
        & \leq \left(\max_{\hat{X}\in B_\varepsilon(\hat P)}\left\|\left|\nabla_{\xi} H(\hat X)\right\|\right|_0\right)\|{\bar \xi} - \xi\|_0.
    \end{align*}
\end{proof}

\begin{corollary}\label{cor:Lip}
There exists $\varepsilon_0>0$ for which the following hold
\begin{itemize}
\item For every $\hat X\in B_{\varepsilon_0}(\hat{P})$, 
\begin{equation}\label{eq:bound on h1}
|H_1(\hat X)|\leq 1.
\end{equation}
\item There is $C<1$ such that for every $\left(t,\alpha,\beta,\bar{\xi}\right), \left(t,\alpha,\beta,\xi\right) \in B_{\varepsilon_0}(\hat P)$ 
\begin{equation} \label{Lipschitz xi1}
\left\|H\left(t,\alpha,\beta,\bar{\xi}\right)-H\left(t,\alpha,\beta,\xi\right)\right\|_0\leq C \left\|\bar{\xi}-\xi\right\|_0.
\end{equation}
\end{itemize}
\end{corollary}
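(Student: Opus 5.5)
Both items follow from continuity of $H$ and of $\nabla_{\xi}H$ at $\hat P$, combined with Proposition \ref{prop:Contraction}. The idea is to shrink the ball until each estimate holds, then take the smaller of the two radii.

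\emph{First item.} At the base point, $H(\hat P)=\mathbf p$. This holds because $F(0,\mathbf 0,\mathbf 0,\mathbf p,\mathbf p)=\mathbf 0$, and the uniqueness part of the Implicit Function Theorem in Step 1 then forces $H(0,\mathbf 0,\mathbf 0,\mathbf p)=\mathbf p$. Hence $H_1(\hat P)=p_1$, and assumption (1) gives $|H_1(\hat P)|<1$. Since $H$ is $C^1$, hence continuous, on $\mathcal W$, there is $\varepsilon_1\le\varepsilon$ such that $|H_1(\hat X)|\le 1$ for all $\hat X\in B_{\varepsilon_1}(\hat P)$. Here we use that $B_{\varepsilon_1}(\hat P)$ is a genuine neighborhood of $\hat P$: the quantity $|t|+\|\alpha\|_0+\|\beta\|_0+\|\xi-\mathbf p\|_0$ is a norm on $\mathbb R^{3n+1}$, and all norms are equivalent. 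In fact the argument yields the strict bound $|H_1|<1$.

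\emph{Second item.} By \eqref{eq:norm<1}, $c_0:=\left|\left\|\nabla_{\xi}H(\hat P)\right\|\right|_0<1$. Fix $C$ with $c_0<C<1$, for instance $C=(1+c_0)/2$. The map $\hat X\mapsto \nabla_\xi H(\hat X)$ is continuous, because $H\in C^1$. The induced matrix norm $\left|\|\cdot\|\right|_0$ is continuous on $M_{n\times n}(\mathbb R)$. So their composition is continuous, and there is $\varepsilon_2\le\varepsilon$ with
\[
\left|\left\|\nabla_{\xi}H(\hat X)\right\|\right|_0\le C \quad\text{for all } \hat X\in B_{\varepsilon_2}(\hat P).
\]
Set $\varepsilon_0=\min(\varepsilon_1,\varepsilon_2)$. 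The ball $B_{\varepsilon_0}(\hat P)$ is compact, so the maximum appearing in Proposition \ref{prop:Contraction} is attained, and it is at most $C$.

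To apply Proposition \ref{prop:Contraction} on the smaller ball $B_{\varepsilon_0}(\hat P)$ instead of $B_\varepsilon(\hat P)$, I check the one geometric fact its proof uses. The ball is convex, being a closed ball for a norm. So for $(t,\alpha,\beta,\xi)$ and $(t,\alpha,\beta,\bar\xi)$ in $B_{\varepsilon_0}(\hat P)$, the whole segment between them lies in $B_{\varepsilon_0}(\hat P)$. The integral estimate in the proof of Proposition \ref{prop:Contraction} then gives \eqref{Lipschitz xi1} with this $C<1$.

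The only point needing care is making sure the ball is a convex neighborhood on which continuity applies; the rest is routine.
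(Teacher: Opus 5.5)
Your proposal is correct and follows the same approach as the paper: continuity of $H$ at $\hat P$ with $H_1(\hat P)=p_1$ gives the first item, and continuity of $\nabla_\xi H$ together with \eqref{eq:norm<1} and the integral estimate of Proposition \ref{prop:Contraction} gives the second. You simply supply details the paper leaves implicit, namely the explicit choice of $C<1$ and the convexity of the smaller ball $B_{\varepsilon_0}(\hat P)$.
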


\edits{}

\begin{proof} 
We have that $|H_1(\hat{P})|=|p_1|<1,$ so by the continuity of $H$, \eqref{eq:bound on h1} follows in a neighborhood of $\hat{P}$.

The existence of $C<1$ satisfying \eqref{Lipschitz xi1} follows from the fact that $H$ is $C^1$, Proposition \ref{prop:Contraction}, and inequality \eqref{eq:norm<1}.
\end{proof}

Notice that the choice of the norm $\|\cdot\|_0$ guarantees that the map $H$ is a contraction in the variable $\xi$ in a neighborhood $B_{\varepsilon_0}(\hat{ P}).$
\hfill\\


\paragraph{\bf Step 3.} For $t_0>0$, we denote by $C^1([-t_0,t_0])$ the space of $C^1$ functions on $[-t_0,t_0]$ with the following norm
 $$
\|Z\|_{C^1([-t_0,t_0])}=\max_{t\in [-t_0,t_0]} \|Z(t)\|_0+\max_{t\in [-t_0,t_0]} \|Z'(t)\|_0.$$
In this step, we define a contraction map $T$ on a subset of $C^1([-t_0,t_0])$.

 Let $M=\max_{\hat X \in B_{\varepsilon_{0}}(\hat {P})} \|H(\hat X)\|_0$ and $K=\dfrac{L(1+2M)}{1-C}$ where $L$ and $C$ are the Lipschitz constants in \eqref{Lipschitz H} and \eqref{Lipschitz xi1}. We define the set ${\mathbb X_{t_0}}$ as follows: $Z\in \mathbb X_{t_0}$ if and only if
 
\begin{enumerate}[label=\roman*)]
    \item $Z\in C^1([-t_0,t_0])$ with $Z(0)={\bf 0}$ and $Z'(0)={\bf p}$,
    \item $\|Z(\bar t)-Z(t)\|_0\leq M |\bar t-t|$ for all $\bar t,t\in [-t_0,t_0]$,
    \item $|Z_1(\bar t)-Z_1(t)|\leq |\bar t-t|$ for all $\bar t,t\in [-t_0,t_0]$,
    \item $\|Z'(\bar t)-Z'(t)\|_0\leq K|\bar t-t|$ for all $\bar t,t\in [-t_0,t_0]$.
\end{enumerate}

Notice that $\mathbb X_{t_0}$ is not empty. In fact, since $H(\hat P)={\bf p}$ we have $\|{\bf p}\|_0\leq M$.
Together with $|p_1|<1$, this shows that $Z(t)=t{\bf p}\in \mathbb X_{t_0}$. Moreover, being  a closed subset of $C^1([-t_0,t_0])$, $\mathbb X_{t_0}$  is complete. 

Also, notice that  i) and iii) imply that 
 \begin{equation}\label{eq:z1 contract}
 |Z_1(t)|\leq |t|
 \end{equation}
 for every $t\in [-t_0,t_0]$ and $Z\in \mathbb X_{t_0}$.

We denote $V_{Z}(t)=(t,Z(t),Z(Z_1(t)),Z'(Z_1(t)))$.

\begin{proposition} \label{Vz in Ball}
Given $t_0>0$ such that $t_0\leq \dfrac{\varepsilon_0}{1+2M+K}$ and $Z\in \mathbb X_{t_0}$, then $V_Z(t)\in B_{\varepsilon_0}(\hat {P})$ for every $t\in [-t_0,t_0]$.
\end{proposition}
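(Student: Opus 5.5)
We need to show that $|t|+\|Z(t)\|_0+\|Z(Z_1(t))\|_0+\|Z'(Z_1(t))-{\bf p}\|_0\leq \varepsilon_0$ for every $t\in[-t_0,t_0]$. Before estimating, we check that $V_Z(t)$ is well defined. By \eqref{eq:z1 contract}, $|Z_1(t)|\leq |t|\leq t_0$, so $Z_1(t)\in[-t_0,t_0]$. Hence $Z(Z_1(t))$ and $Z'(Z_1(t))$ make sense for $Z\in\mathbb X_{t_0}$.

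We bound each of the four terms using properties i)--iv) of $\mathbb X_{t_0}$.
\begin{itemize}
\item First, $|t|\leq t_0$.
\item Second, by ii) with $\bar t=t$ and $Z(0)={\bf 0}$, we get $\|Z(t)\|_0\leq M|t|\leq Mt_0$.
\item Third, ii) applied at the point $Z_1(t)$ gives $\|Z(Z_1(t))\|_0\leq M|Z_1(t)|$. Combined with \eqref{eq:z1 contract}, this is at most $Mt_0$.
\item Finally, iv) with $Z'(0)={\bf p}$ gives $\|Z'(Z_1(t))-{\bf p}\|_0=\|Z'(Z_1(t))-Z'(0)\|_0\leq K|Z_1(t)|\leq Kt_0$.
\end{itemize}

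Summing the four bounds, the quantity defining membership in $B_{\varepsilon_0}(\hat P)$ is at most $(1+2M+K)t_0$. By the hypothesis $t_0\leq \varepsilon_0/(1+2M+K)$, this is at most $\varepsilon_0$, so $V_Z(t)\in B_{\varepsilon_0}(\hat P)$.

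There is no real obstacle here. The only point requiring care is that $Z_1(t)$ stays inside the domain $[-t_0,t_0]$, so that the composed terms are defined and properties ii) and iv) can be applied at $Z_1(t)$. This is exactly what iii), through \eqref{eq:z1 contract}, provides.
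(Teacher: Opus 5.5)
Your proposal is correct and follows the paper's proof essentially line by line: \eqref{eq:z1 contract} keeps $Z_1(t)$ in $[-t_0,t_0]$, i)--ii) give $\|Z(t)\|_0,\ \|Z(Z_1(t))\|_0\leq M|t|$, i) and iv) give $\|Z'(Z_1(t))-{\bf p}\|_0\leq K|t|$, and summing yields $(1+2M+K)t_0\leq\varepsilon_0$. The only blemish is a wording slip in your second bullet: ii) should be applied to the pair $(t,0)$, not with ``$\bar t=t$''.
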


\begin{proof}
From $\mathrm i)$ and $\mathrm{ii})$, we have that for $t\in [-t_0,t_0]$
$$\|Z(t)\|_0\leq M |t|,$$ and so from \eqref{eq:z1 contract}, $Z_1(t)\in [-t_0,t_0]$ and $$\|Z(Z_1(t))\|_0\leq M |Z_1(t)|\leq M |t|.$$ Similarly from i), iv), and \eqref{eq:z1 contract}
$$\|Z'(Z_1(t))-{\bf p}\|_0=\|Z'(Z_1(t))-Z'(0)\|_0\leq K |Z_1(t)|\leq K |t|.$$
Therefore, for every $t\in [-t_0,t_0]$
$$
|t|+\|Z(t)\|_0+\|Z(Z_1(t))\|_0+\|Z'(Z_1(t))-{\bf p}\|\leq |t|+M |t|+M|t|+ K |t|\leq t_0(1+2M+K)\leq \varepsilon_0.
$$
\end{proof}

For $t_0\leq \dfrac{\varepsilon_0}{1+2M+K}$ and $Z\in \mathbb X_{t_0}$, we define the map $T:\mathbb X_{t_0}\mapsto C^1([-t_0,t_0])$
$$(TZ)(t)=\int_{0}^t H(V_Z(s))\, ds.$$
This map well defined since $H\in C^1(B_{\varepsilon_0}(\hat{P}))$, and 
 $V_Z(t)\in B_{\varepsilon_0}(\hat P)$ for $t\in [-t_0,t_0]$.

\begin{proposition}\label{contraction}
There exists $t_0\leq\dfrac{\varepsilon_0}{1+2M+K}$ positive such that $T$ is a contraction on $\mathbb X_{t_0},$ that is, $T(\mathbb X_{t_0})\subseteq \mathbb X_{t_0}$ and for some $q<1$ 
\begin{equation}\label{eq:contraction}
\left\|(TZ^1)-(TZ^2)\right\|_{C^1([-t_0,t_0])}\leq q \left\|Z^1-Z^2\right\|_{C^1([-t_0,t_0])},
\end{equation}
for every $Z^1,Z^2\in \mathbb X_{t_0}$.
\end{proposition}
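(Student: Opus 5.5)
The plan is to check the two parts of the statement directly. First I show that $T$ maps $\mathbb X_{t_0}$ into itself for every admissible $t_0\le \varepsilon_0/(1+2M+K)$, using only Proposition \ref{Vz in Ball}, \eqref{eq:bound on h1}, \eqref{Lipschitz H} and \eqref{Lipschitz xi1}. Then I shrink $t_0$ further to get the contraction estimate \eqref{eq:contraction}. Throughout, write $W=TZ$, so that $W'(t)=H(V_Z(t))$ by the fundamental theorem of calculus, since $H\circ V_Z$ is continuous.

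\emph{Invariance.} Property i): $W(0)={\bf 0}$, and $W'(0)=H(0,Z(0),Z(Z_1(0)),Z'(Z_1(0)))=H(0,{\bf 0},{\bf 0},{\bf p})={\bf p}$, because $Z_1(0)=0$ and $Z'(0)={\bf p}$. Property ii) follows from $\|W'(t)\|_0\le M$, which holds because $V_Z(t)\in B_{\varepsilon_0}(\hat P)$ by Proposition \ref{Vz in Ball}. Property iii) follows in the same way from $|W_1'(t)|=|H_1(V_Z(t))|\le 1$, using \eqref{eq:bound on h1}. For iv), I split $H(V_Z(\bar t))-H(V_Z(t))$ into two pieces. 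The first changes $(t,\alpha,\beta)$ with $\xi$ fixed and is bounded by \eqref{Lipschitz H}. The second changes only $\xi$ and is bounded by \eqref{Lipschitz xi1}. This gives
\[
\|W'(\bar t)-W'(t)\|_0\le L\big(|\bar t-t|+M|\bar t-t|+M|Z_1(\bar t)-Z_1(t)|\big)+CK|Z_1(\bar t)-Z_1(t)|,
\]
where ii) and iv) for $Z$ were used. Applying iii) for $Z$, the right-hand side is at most $(L(1+2M)+CK)|\bar t-t|=K|\bar t-t|$, precisely by the choice $K(1-C)=L(1+2M)$. So the constant $K$ is designed to close this step.

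\emph{Contraction.} Take $Z^1,Z^2\in\mathbb X_{t_0}$, set $W^i=TZ^i$, and let $\delta=\max_{[-t_0,t_0]}\|(Z^1)'-(Z^2)'\|_0$. Since $Z^1(0)=Z^2(0)$, we have $\max\|Z^1-Z^2\|_0\le t_0\delta$. By equivalence of norms there is $c>0$ with $|x_1|\le c\|x\|_0$, hence $|Z^1_1(t)-Z^2_1(t)|\le c\,t_0\delta$. For the composed terms I insert the intermediate points $Z^1(Z^2_1(t))$ and $(Z^1)'(Z^2_1(t))$, and use the Lipschitz bounds ii) and iv) for $Z^1$:
\[
\|Z^1(Z^1_1(t))-Z^2(Z^2_1(t))\|_0\le (Mc+1)t_0\delta,\qquad \|(Z^1)'(Z^1_1(t))-(Z^2)'(Z^2_1(t))\|_0\le Kc\,t_0\delta+\delta .
\]
Splitting $H(V_{Z^1})-H(V_{Z^2})$ as before then yields
\[
\max\|(W^1)'-(W^2)'\|_0\le \big(C+A t_0\big)\delta,\qquad A=L(2+Mc)+CKc .
\]
Integrating from $0$ gives $\max\|W^1-W^2\|_0\le t_0(C+At_0)\delta$. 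Therefore
\[
\|W^1-W^2\|_{C^1}\le (1+t_0)(C+At_0)\,\|Z^1-Z^2\|_{C^1}.
\]
Since $C<1$, I choose $t_0$ small enough that $q:=(1+t_0)(C+At_0)<1$.

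The main obstacle is the composed-argument terms $Z(Z_1(t))$ and $Z'(Z_1(t))$. A naive estimate produces a factor like $K\|Z^1_1-Z^2_1\|_\infty$ measured against the full $C^1$ norm, which is not small. This is handled by two facts: $\mathbb X_{t_0}$ carries uniform Lipschitz bounds (ii and iv) on $Z$ and $Z'$, and all functions vanish at $0$, so the $C^0$ differences are $O(t_0)$ times the derivative difference. The only term of order one is then $C\delta$, and there the strict contraction $C<1$ in $\xi$ from Corollary \ref{cor:Lip} is used.
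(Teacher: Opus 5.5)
Your proposal is correct and follows the paper's proof essentially step for step: invariance comes from $\|H\|_0\le M$, from $|H_1|\le 1$, and from splitting $H(V_Z(\bar t))-H(V_Z(t))$ with the choice of $K$ closing the estimate; the contraction then comes from inserting an intermediate point in the composed terms, equivalence of norms, and $\max\|Z^1-Z^2\|_0\le t_0\max\|(Z^1)'-(Z^2)'\|_0$, giving a factor $(1+t_0)(C+At_0)\to C<1$. The only differences are cosmetic: you insert $Z^1(Z_1^2(t))$ rather than the paper's $Z^2(Z_1^1(t))$, and you measure against the derivative difference $\delta$ rather than the full $C^1$ norm, which yields the same constant $A=L(2+Mc)+CKc$ as the paper.
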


\begin{proof}
Let $W=TZ$, with $Z\in \mathbb X_{t_0},$ we show that $W\in \mathbb X_{t_0}$.

From the definition of $T$, $W\in C^1([-t_0,t_0])$,  $W(0)={\bf 0}$, and $W'(0)=H(V_Z(0))=H(0,{\bf 0,0,p} )={\bf p}$.

For $ \bar t,t\in   [-t_0,t_0]$, say $t\leq \bar t$,  
\begin{align*}
    \|W(\bar t)-W(t)\|_0\leq \int_{ t}^{\bar t}\|H(V_Z(s))\|_0\, ds\leq M|\bar t-t|.
\end{align*}
Also, from inequality \eqref{eq:bound on h1}
$$|W_1(\bar t)-W_1(t)|\leq \int_t^{\bar t}|H_1(V_Z(s))|\, ds\leq |\bar t-t|.$$

It remains to check that $W$ satisfies iv).  From \eqref{Lipschitz H}, \eqref{Lipschitz xi1}, \eqref{eq:z1 contract},  the expression of $K$, and the fact that $Z\in \mathbb X_{t_0}$, we get that
\begin{align*}
    \left\|W'(\bar t)-W'(t)\right\|_0&=\|H(V_Z(\bar t))-H(V_Z(t))\|\\
    &\leq L (|\bar t-t|+\|Z(\bar t)-Z(t)\|_0+\|Z(Z_1(\bar t))-Z(Z_1(t))\|_0)+C\|Z'(Z_1(\bar t))-Z'(Z_1(t))\|_0\\
    &\leq L(|\bar t-t|+M |\bar t-t|+M|Z_1(\bar t)-Z_1( t)|) + CK |Z_1(t)-Z_1(\bar t)|\\
    &\leq \left(L(1+2M)+CK\right)|\bar t-t|
    =K|\bar t-t|.
\end{align*}
Therefore, $W\in \mathbb X_{t_0}.$

Next, we prove \eqref{eq:contraction}. Let $W^1=TZ^1$, and $W^2=TZ^2$ with $Z^1,Z^2\in \mathbb X_{t_0}.$ By the Fundamental Theorem of Calculus, for every $Z\in C^1([-t_0,t_0])$ satisfying $Z(0)={\bf 0}$
$$\|Z(t)\|_0=\left\|\int_0^tZ'(s)\, ds\right\|_0\leq \left(\max_{t\in [-t_0,t_0]}\|Z'(t)\|_0\right) t_0.$$
Then, 
\begin{equation}\label{eq:relation function derivative}
    \max_{t\in [-t_0,t_0]} \|Z(t)\|_0\leq t_0 \max_{t\in [-t_0,t_0]}\|Z'(t)\|_0\leq  t_0\|Z\|_{C^1([-t_0,t_0])}.
\end{equation}
Therefore, 
\begin{equation}\label{eq:W Lip}
\|W^1-W^2\|_{C^1([-t_0,t_0])}\leq (1+t_0) \max_{t\in [-t_0,t_0]}\|(W^1)'(t)-(W^2)'(t)\|_0.
\end{equation}

From the definition of $W^i$ and the Lipschitz properties in \eqref{Lipschitz H} and \eqref{Lipschitz xi1},
\begin{align*}
\|(W^1)'(t)-(W^2)'(t)\|_0&=\|H(V_{Z^1}(t))-H(V_{Z^2}(t))\|_0\\
&\leq L\left(\|Z^1(t)-Z^2(t)\|_0+\|Z^1(Z_1^1(t))-Z^2(Z_1^2(t))\|_0\right)+C\|(Z^1)'(Z_1^1(t))-(Z^2)'(Z_1^2(t))\|_0.
\end{align*}
We now estimate the terms in the above inequality. From \eqref{eq:relation function derivative},
$$\|Z^1(t)-Z^2(t)\|_0\leq t_0 \|Z^1-Z^2\|_{C^1([-t_0,t_0])}.$$
Since all norms in 
$\mathbb R^n$ are 
equivalent, there 
exists a constant 
$C_{\|\cdot\|_0}$ such 
that 
\begin{equation}\label{eq:norms}
|x_1|\leq 
C_{\|\cdot\|_0}\|x\|_0
\end{equation}
for every $x=
(x_1,x_2,\cdots,x_n)\in
\mathbb R^n$. Hence, 
using ii), \eqref{eq:z1 contract}, \eqref{eq:relation function derivative}, and \eqref{eq:norms}
\begin{align*}
    \|Z^1(Z_1^1(t))-Z^2(Z_1^2(t))\|_0&\leq \|Z^1(Z_1^1(t))-Z^2(Z_1^1(t))\|_0+\|Z^2(Z_1^1(t))-Z^2(Z_1^2(t))\|_0\\
    &\leq \max_{t\in [-t_0,t_0]}\|Z^1(t)-Z^2(t)\|_0 + M|Z^1_1(t) - Z^2_1(t)|\\
    &\leq \max_{t\in [-t_0,t_0]}\|Z^1(t)-Z^2(t)\|_0 + M C_{\|\cdot\|_0}\|Z^1(t)-Z^2(t)\|_0\\
    &\leq t_0(1+M C_{\|\cdot\|_0})\|Z^1-Z^2\|_{C^1([-t_0,t_0])}.
\end{align*}
Similarly, from iv), \eqref{eq:z1 contract}, \eqref{eq:relation function derivative}, and \eqref{eq:norms}
\begin{align*}
\|(Z^1)'(Z_1^1(t))-(Z^2)'(Z_1^2(t))\|_0&\leq \|(Z^1)'(Z_1^1(t))-(Z^2)'(Z_1^1(t))\|_0+\|(Z^2)'(Z_1^1(t))-(Z^2)'(Z_1^2(t))\|_0\\
&\leq  \max_{t\in [-t_0,t_0]}\|(Z^1)'(t)-(Z^2)'(t)\|_0+K|Z_1^1(t)-Z_1^2(t)|\\
&\leq \max_{t\in [-t_0,t_0]}\|(Z^1)'(t)-(Z^2)'(t)\|_0+K C_{\|\cdot\|_0}\|Z^1(t)-Z^2(t)\|_0\\
&\leq (1+K C_{\|\cdot\|_0}t_0)\|Z^1-Z^2\|_{C^1([-t_0,t_0])}.
\end{align*}
Combining the above inequalities, we conclude that
$$
\|(W^1)'(t)-(W^2)'(t)\|_0\leq
\left( L t_0(2+M C_{\|\cdot\|_0})+C(1+K C_{\|\cdot\|_0}t_0)\right)\|Z^1-Z^2\|_{C^1([-t_0,t_0])}.
$$
Hence, replacing in \eqref{eq:W Lip}
$$
\|W^1-W^2\|_{C^1([-t_0,t_0])}\leq (1+t_0) \left(Lt_0(2+M C_{\|\cdot\|_0})+C(1+K C_{\|\cdot\|_0}t_0)\right)\|Z^1-Z^2\|_{C^1([-t_0,t_0])}
$$
Notice that, as $t_0\to 0$, the term multiplying $\|Z^1-Z^2\|_{C^1([-t_0,t_0])}$ converges to $C<1$, so the Proposition follows for $t_0$ small enough.
\end{proof}

{\bf Step 4.} From Step 3, we choose $t_0$ small enough so that $T$ is a contraction on the complete metric space $\mathbb X_{t_0}$. By the Banach Fixed-Point Theorem, $T$ has a unique fixed point $Z^0\in \mathbb X_{t_0}$.

From the definition of $T$, $Z^0$ solves the system \eqref{eq:simplified system} on $[-t_0,t_0]$, and by Step 1, $Z^0$ solves \eqref{eq:system}. To complete the proof of the Theorem, we show that $Z^0$ is the unique solution to \eqref{eq:system} on $[-t_0,t_0]$ satisfying $(Z^0)'(0)={\bf p}$ (by making $t_0$ smaller if needed). Let $W^0\in C^1([-t_0,t_0])$ be a solution to \eqref{eq:system} with $(W^0)'(0)={\bf p}$. From Step $1$, $W^0$ is a local solution to \eqref{eq:simplified system} with $(W^0)'(0)={\bf p}$. From the properties of $H$, and proceeding as in the beginning of the proof of Proposition \ref{contraction}, we get that $W^0\in \mathbb X_{t_0}$ and $TW^0=W^0$. Therefore, by the uniqueness of the fixed point, we obtain that $W^0=Z^0$ in $[-t_0,t_0]$. 
\qed
\section{The dichromatic problem}\label{sec: Dichromatic Lens}

\subsection{Preliminary} \label{prelim}
\subsubsection{Snell's law}\label{subsub:Snell}
Given two homogeneous media $I$ and $II$ separated by a surface $\Gamma$. A monochromatic ray propagating in medium $I$ in the unit direction $\bf x$ is refracted by $\Gamma$ into medium $II$ with unit direction ${\bf m}$ according to Snell's law \cite{BornWolf1999}
\begin{equation}\label{eq:Snell Law}
    n_1({\bf x}\times \boldsymbol \nu)=n_2({\bf m}\times \boldsymbol \nu).
\end{equation}
$\boldsymbol \nu$ is the unit normal to $\Gamma$ at the point of incidence directed toward medium $II$, while $n_1$ and $n_2$ are the refractive indices of media $I$ and $II$ respectively. Moreover, the indices depend on the frequency of the propagating ray \cite{JenkinsWhite2001}.

 From \cite{Gutierrez2013},  \eqref{eq:Snell Law} yields the following formula for the unit direction of the refracted ray
\begin{equation}\label{eq:refracted ray}
{\bf m}=\dfrac{n_1}{n_2} \left({\bf x}-\lambda \boldsymbol \nu\right) 
\end{equation}
where
\begin{equation}\label{eq:mu}
\lambda=\dfrac{1-\left(\dfrac{n_2}{n_1}\right)^2}{{\bf x}\cdot \boldsymbol \nu+\sqrt{\left(\dfrac{n_2}{n_1}\right)^2-1+({\bf x}\cdot \boldsymbol \nu)^2}}.
\end{equation}
For $n_1<n_2$, refraction always occurs and $\lambda<0$. When $n_1>n_2$, refraction occurs only when 
\begin{equation}\label{eq:internal reflection}
{\bf x}\cdot {\boldsymbol \nu}\geq \sqrt{1-\left(\dfrac{n_2}{n_1}\right)^2}
\end{equation}
and, for such directions, $\lambda>0$.

From \eqref{eq:refracted ray}, we get that the vectors ${\bf x, m,}$ and ${\bf \boldsymbol \nu}$ belong to the same plane, the plane of incidence. We then recover the scalar Snell's law
\begin{equation}\label{eq:scalarSnell}
    n_1\sin \theta_1=n_2\sin \theta_2,
\end{equation}
where $\theta_1$ is the angle of incidence between ${\bf x}$ and ${\boldsymbol \nu}$, and $\theta_2$ is the angle of refraction between ${\bf m}$ and ${\boldsymbol \nu}.$

\subsubsection{The Monochromatic problem} \label{prelim:monochromatic}
Let $\Omega$  be a compact subset of the upper unit sphere $S^2$ given by $\Omega={\bf x}(D)$, where $D$ is a convex and compact domain in $\mathbb R^2$. Suppose monochromatic rays are emitted from the origin with unit directions ${\bf x}(t)$  for $t\in D$. Given a surface $L$, the monochromatic problem is concerned with constructing a surface $S$
 so that the lens with lower face $L$ and upper face $S$ refracts all the incident rays into a collimated beam with a given unit direction ${\bf w}$.
 
 For the purpose of this paper, the lens is surrounded by vacuum and the medium inside has refractive index $n>1$. 
Assume that the lower face $L$ is parametrized by $\rho(t){\bf x}(t)$ with $\rho$ a given positive $C^2(D)$ function. For every $t\in D$, the incident ray with direction ${\bf x}(t)$ is refracted by $L$ into the direction ${\bf m}(t)$, given by \eqref{eq:refracted ray}.
  From \cite{Gutierrez2013}, if \begin{equation}\label{eq:cond}
 {\bf m}(t)\cdot {\bf w}\geq \dfrac{1}{n}
 \end{equation}
 for every $t\in D$ then there exists a surface $S$, such that the lens $(L,S)$ refracts all emitted rays into the direction ${\bf w}$. $S$ is parametrized by ${\bf f}(t)=\rho(t){\bf x}(t)+d(t){\bf m}(t)$ with 
 \begin{equation}\label{eq:d}
 d(t)=\dfrac{C-\rho(t)(1-{\bf w}\cdot {\bf x}(t))}{n-{\bf w}\cdot {\bf m}(t)},
 \end{equation}
 where $C$ is a constant chosen so that $ d(t)>0$. Notice that different $C$'s define distinct upper faces, obtaining for each given lower face a family of lenses that solve the monochromatic problem.

\subsection{Setup of the dichromatic problem.} \label{sec:Setup}

In this paper, we are interested in the case when the light emitted from the origin is dichromatic, that is, composed of two frequencies. The goal is to construct a lens (both lower and upper faces are unknown), so that rays of both colors emerge from the lens in the same chosen unit direction $\textbf{w}$, see Figure \ref{fig:Scaled2D}. 
\begin{figure}[ht]
     \centering
     \includegraphics[width=0.5\linewidth]{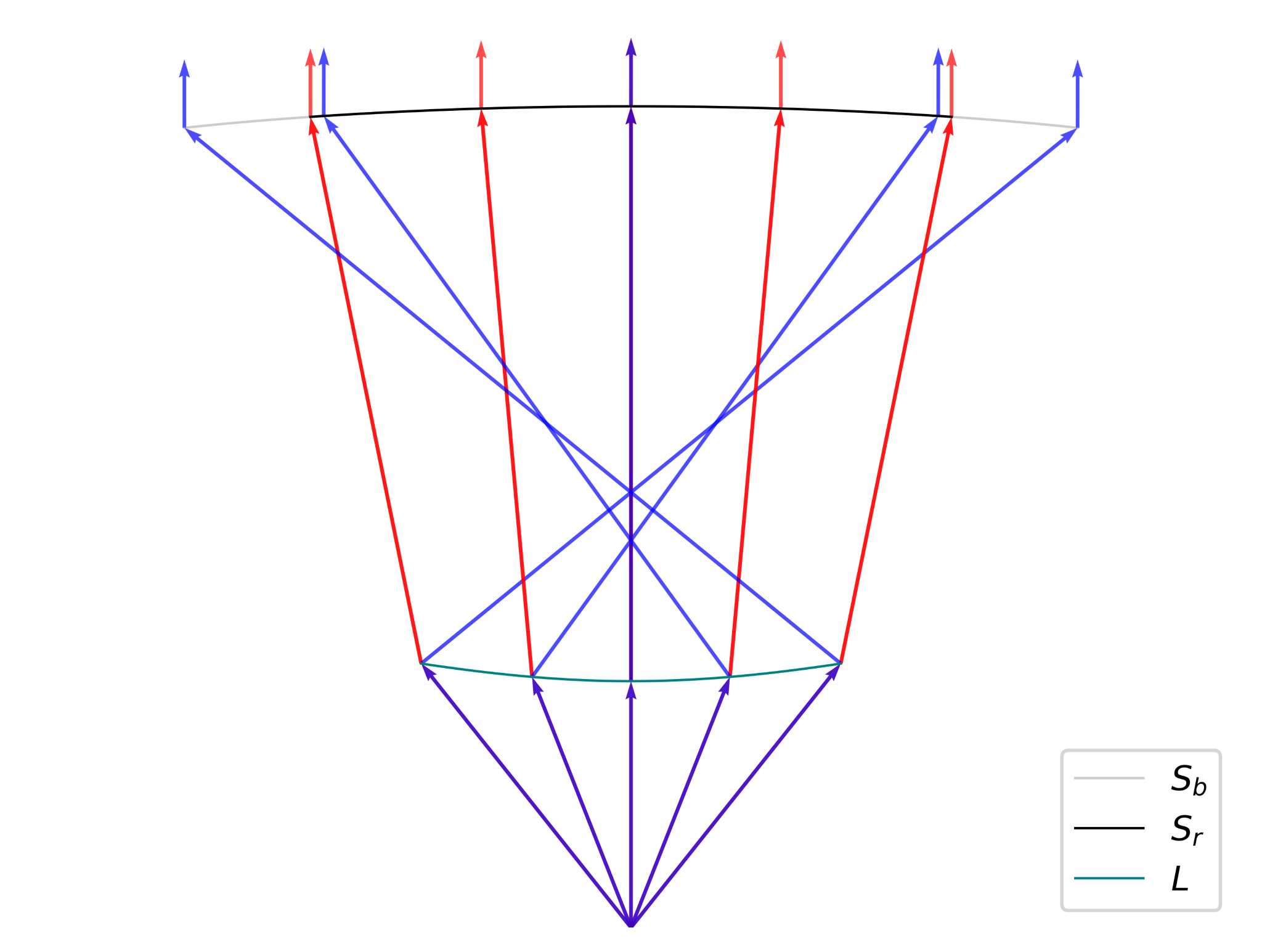}
     \caption{Dichromatic Lens}
     \label{fig:Scaled2D}
 \end{figure}

\noindent Note that waves of different frequencies
composing the incident ray refract at different angles, resulting in the dispersion of light inside the lens.

More precisely, given  $\Omega$ and $D$ as in Section \ref{prelim:monochromatic}, suppose that dichromatic rays composed of two  frequencies $\omega_r$ and $\omega_b$ are emitted from the origin $O$ with unit directions ${\bf x}(t)$, $t\in D$. We define the lenses 
$(L,S_r)$ and $(L,S_b)$, surrounded by vacuum and composed of a homogeneous material with refractive indices $n_r$ and $n_b$ corresponding to the frequencies $\omega_r$ and $\omega_b$. 
$L$ is parametrized by the vector $\rho(t)
{\bf x}(t)$, $t\in D$, with 
$\rho\in C^2(D)$ positive. $S_{r}$ (respectively $S_b$)
is parameterized by ${\bf f}_r(t)=\rho(t){\bf x}
(t)+d_r(t){\bf m}_r(t)$ (${\bf f}_b(t)=\rho(t){\bf x}
(t)+d_b(t){\bf m}_b(t)$). Here, ${\bf 
m}_r$ (${\bf m}_b$) is given by \eqref{eq:refracted 
ray} with $\boldsymbol \nu=\boldsymbol{\nu}_{L}$ the normal to $L$ at $\rho(t){\bf x}(t)$, $n_1=1$ and $n_2=n_r$ $(n_2=n_b)$,  
and $d_r$ ($d_b$) is given by \eqref{eq:d} with 
$n=n_r$ $(n=n_b)$ and $C=C_r$ $(C=C_b)$ chosen such that 
$d_r>0$ $(d_b>0)$. We assume that $n_b>n_r>1$. From Section \ref{prelim:monochromatic}, if the domain $D$ and the direction ${\bf w}$ are  such that ${\bf m}_r(t)\cdot {\bf w}\geq \dfrac{1}{n_r}$ for every $t\in D$, then the lens $(L,S_r)$ refracts the rays with frequency $\omega_r$ into ${\bf w}$.  Similarly, if ${\bf m}_b(t)\cdot {\bf w}\geq \dfrac{1}{n_b}$ then the lens $(L,S_b)$ refracts the rays with frequency $\omega_b$ into ${\bf w}$.

 Our objective is to determine whether there exists a lens $(L,S)$ that solves the monochromatic problem for both frequencies $\omega_r$ and $\omega_b$ simultaneously, which we refer to as the dichromatic problem. In other words, we study the existence of a positive function $\rho\in C^2(D)$, real numbers $C_r$ and $C_b$, a $C^1$ map $\varphi:D\mapsto D$ such that ${\bf m}_r$ and ${\bf m}_b$ satisfy \eqref{eq:cond}, $d_r$ and $d_b$ are positive, ${\bf f}_r$ and ${\bf f}_b$ have a normal at each point, and 
\begin{equation}\label{phi}
{\bf f}_r(t)={\bf f}_b(\varphi(t))  \qquad t\in D,
\end{equation}
see Figure \ref{fig: diagram}.
\begin{figure}[ht] 
\centering
\resizebox{0.4\textwidth}{!}{%
\begin{circuitikz}


  \draw [
    line width=0.5pt,
    dashed, draw opacity = 0.75
  ] (12.5,2.875) -- (12.5,17.8);

\node[black, font=\fontsize{5pt}{5pt}\selectfont] at (12.35, 5.48) {$\varphi(t)$}; 
\draw[color = Blue, line width = 1pt](12.5,4.125)--(11.9, 6.55);
\draw[color=Blue, line width = 1pt] (11.9, 6.55) -- (12.6,15.05);

  \draw [
    color=Crimson,
    draw opacity=1,
    line width=1pt
  ] (12.5,4.125) -- (14,6.7);

  \draw [
    color=Crimson,
    draw opacity=1,
    line width=1pt
  ] (14,6.7)--(12.6,15.05);

\draw[color=black, draw opacity=0.5,-{Stealth[scale=0.7pt]}] (11.9, 6.55)--(12.07,7.6);

\draw[color=black, draw opacity=0.5,-{Stealth[scale=0.7pt]}](14,6.7)--(13.7, 7.6);

\draw[black] (12.5,5.7) arc (90:139:0.45);


  
  \draw [
    color=Crimson,
    draw opacity=1,
    line width=1pt] (12.6,15.05) -- (12.6,17.25);

\draw[color = Blue, line width = 1pt, dashed](12.6,15.05) -- (12.6,17.25);

 \draw[color=red, -{Stealth[scale=1.4, newpurple]}] (12.6,15.05)--(12.6,16.25) node[
      pos=1, right,
      inner xsep=0.080cm, inner ysep=0.085cm,
      rounded corners=0.020cm, font =\small
    ]{\color{newpurple}\textbf{w}};

  \draw [line width=1pt, short]
    (8.93,14.725) .. controls (11.465,15.175) and (13.64,15.175) .. (16.13,14.725)
    node[pos=1, right,]{S};

  \draw [line width=1pt, short]
    (8.522,7.915) .. controls (11.003,6.07) and (13.997,6.07) .. (16.478,7.915)
    node[ pos=1, right]{L};

\draw[black] (12.5,5.5) arc (90:45:0.9);
\node[black, font=\tiny] at (12.7, 5) {$t$};

  \draw [ color=teal,
    line width=1pt,
    {Stealth[scale=0.75]}-{Stealth[scale=0.75]}
  ] (14.416,6.8) -- (13.0,14.95)
    node[
      pos=0.5,
      fill=white, fill opacity=1,
      text opacity=1,
      inner xsep=0.080cm, inner ysep=0.085cm,
      rounded corners=0.020cm,
      text opacity=1, font = \tiny
    ]{{\color{teal}$d_r(t)$}};

  \draw [ color=teal,
    line width=1pt,
    {Stealth[scale=0.75]}-{Stealth[scale=0.75]}
  ] (11.421, 6.6) -- (12.2,14.95)
    node[
      pos=0.5,
      fill=white, fill opacity=1,
      text opacity=1,
      inner xsep=0.01cm, inner ysep=0.085cm,
      rounded corners=0.020cm,
      text opacity=1, font =\tiny
    ]{{\color{teal}$d_b(\varphi(t))$}};
 
  \draw [color=Crimson, line width=1pt,
    -{Stealth[scale=0.75pt]}] 
    (14,6.7) -- (13.826,7.7)
    node[xshift=3.1mm, yshift=1mm, fill=white, fill opacity=1, text opacity=1, inner xsep=0mm, inner ysep=0mm, rounded corners=0.02cm, font =\tiny]{$\textbf{m}_r(t)$};

  \draw [color=Blue, line width=1pt,-{Stealth[scale=0.75pt]}]
   (11.9, 6.55) -- (12,7.7)
    node[xshift=-5.8mm,yshift=0.8mm, fill=white, fill opacity=1, text opacity=1, inner xsep=0.05cm, inner ysep=0.05cm, rounded corners=0.02cm, font =\tiny]{$\textbf{m}_b(\varphi(t))$};

  \draw [
    color=teal,
    draw opacity=1,
    line width=1pt,
    {Stealth[scale=0.75]}-{Stealth[scale=0.75]}
  ] (12.877,4) -- (14.5,6.85)
    node[ xshift = -4mm, yshift=-15mm,
      fill=white, fill opacity=1,
      text opacity=1,
      inner xsep=0.080cm, inner ysep=0.085cm,
      rounded corners=0.020cm,
      text opacity=1, font=\tiny
    ]{{\color{teal}$\rho(t)$}};

\draw[color=teal,
    draw opacity=1,
    line width=1pt,
    {Stealth[scale=0.75]}-{Stealth[scale=0.75]}
  ] (12.031,4)--(11.375,6.65)
    node[ xshift=-2.5mm, yshift=-12mm,
      fill=white, fill opacity=1,
      text opacity=1,
      inner xsep=0.080cm, inner ysep=0.085cm,
      rounded corners=0.020cm,
      text opacity=1, font=\tiny 
    ]{{\color{teal}$\rho(\varphi(t))$}};


    \draw[ color=Crimson, line width = 1pt, -{Stealth[scale=0.75pt]}] (12.5,4.125) -- (13.14,5.2) node[ xshift =3.5mm, yshift=0.4mm , fill=white, fill opacity=1, text opacity=1, inner xsep=0.05cm, inner ysep=0.05cm, rounded corners=0.02cm, font=\tiny] {$\textbf{x}(t)$};

\draw[color = Blue, line width = 1pt, -{Stealth[scale=0.75pt]}](12.5,4.125)--(12.2, 5.3) node[xshift=-5mm, yshift=-1.5mm,fill=white, fill opacity=1, text opacity=1, inner xsep=0.05cm, inner ysep=0.05cm, rounded corners=0.02cm, font=\tiny] {$\textbf{x}(\varphi(t))$};

 \node [circ, color=teal,label={[label distance=1mm]30:\color{teal}$f_r(t) = f_b(\varphi(t))$}] at (12.6,15.05){};

\end{circuitikz}
}%
\caption{Diagram of the dichromatic problem with the corresponding functions}\label{fig: diagram}
\end{figure}

Notice that if a solution exists, then ${\bf f}_r(D)\subseteq {\bf f}_b(D)$, that is, $S_r\subseteq S_b$ and the lens sandwiched between $L$ and $S_b$ refracts rays of both frequencies into ${\bf w}$. In other words, there could be points on ${\bf f}_b(D)$ that are not reached by the rays with frequency $\omega_r$, see Figure \ref{fig:Scaled2D}. 

Now, we provide a necessary condition for the solvability of the dichromatic problem.

\begin{proposition}\label{prop:straight ray}
If the dichromatic problem is solvable, then there exists $s_0\in D$ such that ${\bf x}(s_0)={\bf w}$, that is, ${\bf w}\in {\bf x}(D)$. In addition, we obtain that $$d_r(s_0)=d_b(s_0) \qquad and
\qquad {\bf x}(s_0)={\bf m}_r(s_0)={\bf m}_b(s_0)=\boldsymbol{\nu}_{L} (s_0)={\bf w},$$
where $\boldsymbol \nu_L(s_0)$ is the unit normal to $L$ at $\rho(s_0){\bf x}(s_0)$ toward the medium of the lens.
\end{proposition}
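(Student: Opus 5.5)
The plan is to locate a parameter value $s_0$ at which the red and the blue ray coincide all the way through the lens. The map $\varphi$ in \eqref{phi} is a continuous map of $D$ into itself, and $D$ is convex and compact. By Brouwer's fixed point theorem there is therefore $s_0\in D$ with $\varphi(s_0)=s_0$.

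First I use the fixed point on the lower face. At $s_0$, \eqref{phi} gives
$$\rho(s_0){\bf x}(s_0)+d_r(s_0){\bf m}_r(s_0)=\rho(s_0){\bf x}(s_0)+d_b(s_0){\bf m}_b(s_0),$$
so $d_r(s_0){\bf m}_r(s_0)=d_b(s_0){\bf m}_b(s_0)$. Both $d$'s are positive and both ${\bf m}$'s are unit vectors. Taking norms gives $d_r(s_0)=d_b(s_0)$, and then ${\bf m}_r(s_0)={\bf m}_b(s_0)=:{\bf m}$.

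Next I apply Snell's law \eqref{eq:Snell Law} on $L$ at $\rho(s_0){\bf x}(s_0)$, with $\boldsymbol\nu=\boldsymbol\nu_L(s_0)$, for both frequencies:
$${\bf x}(s_0)\times\boldsymbol\nu=n_r({\bf m}\times\boldsymbol\nu)=n_b({\bf m}\times\boldsymbol\nu).$$
Since $n_r\neq n_b$, this forces ${\bf m}\times\boldsymbol\nu=0$, and hence ${\bf x}(s_0)\times\boldsymbol\nu=0$. So ${\bf x}(s_0)=\pm\boldsymbol\nu$. The normal points into the lens, which is the side the incident ray enters, so ${\bf x}(s_0)\cdot\boldsymbol\nu>0$ and ${\bf x}(s_0)=\boldsymbol\nu$. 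Then \eqref{eq:refracted ray} gives ${\bf m}=\frac{1}{n_r}(1-\lambda_r)\boldsymbol\nu$. Because ${\bf m}$ is a unit vector and $1-\lambda_r>0$ (recall $\lambda_r<0$), we get ${\bf m}=\boldsymbol\nu={\bf x}(s_0)$.

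Then I do the same on the upper face at ${\bf f}_r(s_0)={\bf f}_b(s_0)$. Both exiting rays go in direction ${\bf w}$. Snell's law with $n_1=n_r$ (respectively $n_b$) and $n_2=1$ gives
$$n_r({\bf m}\times\boldsymbol\nu_S)={\bf w}\times\boldsymbol\nu_S=n_b({\bf m}\times\boldsymbol\nu_S).$$
As before, ${\bf m}\times\boldsymbol\nu_S=0$ and ${\bf w}\times\boldsymbol\nu_S=0$, so ${\bf m}$ and ${\bf w}$ are both $\pm\boldsymbol\nu_S$. Condition \eqref{eq:cond} gives ${\bf m}\cdot{\bf w}\ge 1/n_r>0$, which forces ${\bf w}={\bf m}$. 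This yields ${\bf x}(s_0)={\bf m}_r(s_0)={\bf m}_b(s_0)=\boldsymbol\nu_L(s_0)={\bf w}$, and in particular ${\bf w}\in{\bf x}(D)$.

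The main obstacle is this last step. It requires a single normal $\boldsymbol\nu_S$ shared by both refractions at the common point, i.e.\ that $S_r$ and $S_b$ have the same tangent plane at ${\bf f}_r(s_0)$. I would obtain this from $S_r={\bf f}_b(\varphi(D))\subseteq S_b$ near that point. Differentiating ${\bf f}_r={\bf f}_b\circ\varphi$ at $s_0$ shows that the tangent vectors of $S_r$ lie in the tangent plane of $S_b$. Since both surfaces are assumed to have normals, the tangent spaces coincide and the normals agree up to sign, which is harmless for the cross-product argument.
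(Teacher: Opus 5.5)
Your proposal is correct and follows essentially the paper's route. Brouwer's theorem gives a fixed point $s_0$ of $\varphi$, comparing norms in $d_r(s_0){\bf m}_r(s_0)=d_b(s_0){\bf m}_b(s_0)$ gives $d_r(s_0)=d_b(s_0)$ and ${\bf m}_r(s_0)={\bf m}_b(s_0)$, and Snell's law with $n_r\neq n_b$, first on $L$ and then on the upper face, forces normal incidence and ${\bf x}(s_0)={\bf m}_r(s_0)={\bf m}_b(s_0)=\boldsymbol\nu_L(s_0)={\bf w}$.

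The differences are that you use the vector form of Snell's law instead of the scalar angle form, and that you make explicit points the paper leaves implicit: the sign choices; the use of \eqref{eq:cond} to get ${\bf m}={\bf w}$ rather than $-{\bf w}$; and the fact that $S_r$ and $S_b$ share a normal line at ${\bf f}_r(s_0)$ because ${\bf f}_r={\bf f}_b\circ\varphi$. Making these explicit tightens the argument.
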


\begin{proof}
     Suppose $\varphi:D\mapsto D$ exists. Then, by the Brouwer Fixed Point Theorem, there exists $s_0\in D$ such that $\varphi(s_0)=s_0$. This implies, from \eqref{phi}, that ${\bf f}_r(s_0)={\bf f}_b(s_0)$ and so $d_r(s_0){\bf m}_r(s_0)=d_b(s_0){\bf m}_b(s_0)$. Since $d_r,d_b>0$, and $|{\bf m}_r|=|{\bf m}_b|=1$, we conclude that $d_r(s_0)=d_b(s_0)$ and ${\bf m}_r(s_0)={\bf m}_b(s_0)$.

     From Section \ref{subsub:Snell}, for every $t\in D$, the vectors ${\bf m}_r(t)$, ${\bf m}_b(t)$, ${\bf x}(t)$, and $\boldsymbol{\nu}_{L}(t)$ belong to the same plane of incidence. Let $\theta(t)$ be the angle of incidence between ${\bf x}(t)$ and $\boldsymbol{\nu}_{L}(t)$, and let $\theta_r(t)$ (respectively $\theta_b(t)$) be the angle of refraction between ${\bf m}_r(t)$ (${\bf m}_b(t)$) and $\boldsymbol{\nu}_{L}(t)$. Since ${\bf m}_r(s_0)={\bf m}_b(s_0)$, $\theta_r(s_0)=\theta_b(s_0)$. From \eqref{eq:scalarSnell} and having that $n_b\neq n_r$, we obtain that $\theta(s_0)=\theta_r(s_0)=\theta_b(s_0)=0$ which yields ${\bf x}(s_0)={\bf m}_b(s_0)={\bf m}_r(s_0)$. With the same argument at the point ${\bf f}_r(s_0)={\bf f}_b(s_0)$, we get that ${\bf m}_r(s_0)={\bf m}_b(s_0)={\bf w}.$
\end{proof}

\subsection{The two dimensional setup}
{}
We consider the dichromatic problem in $\mathbb R^2$. By rotating the plane, we may assume that ${\bf w}=(0,1)$. Suppose rays are emitted from the origin with unit directions ${\bf x}(t)=(\sin t,\cos t)$, for $t$ in a closed interval $D\subseteq (-\pi/2,\pi/2)$ in $\mathbb R$. 
Assume that there exists a solution to the dichromatic problem.
From Proposition \ref{prop:straight ray}, $s_0=0$ belongs to $D$ and is a fixed point of $\varphi.$ As a consequence, 
\begin{equation}\label{eq:phi at 0}
\varphi(0)=0,
\end{equation}
and
\begin{equation}\label{eq:ray at 0}
  {\bf x}(0)={\bf m}_r(0)={\bf m}_b(0)=\boldsymbol{\nu}_{L}(0)={\bf w}=(0,1).  
\end{equation}
Hence, replacing in \eqref{eq:d} and using Proposition \ref{prop:straight ray}, we get
\begin{equation}\label{ref:relation C}
d_r(0)=d_b(0)=\dfrac{C_r}{n_r-1}=\dfrac{C_b}{n_b-1}.
\end{equation}

Moreover, from \cite[Lemma 5.3]{SIAMGUTSAB}, the unit normal $\boldsymbol{\nu}_{L}(t)$ to $L$ at $\rho(t){\bf x}(t)$ is given by 
\begin{equation}\label{eq:formula for normal}
\boldsymbol{\nu}_{L}(t)=\dfrac{1}{\sqrt{\rho(t)^2+\rho'(t)^2}}\left(\rho(t)\sin t-\rho'(t)\cos t, \rho'(t)\sin t+\rho(t)\cos (t)\right),
\end{equation}
and so at $t=0$, \eqref{eq:ray at 0} gives that
\begin{equation}\label{eq: rho' at zero}
    \rho'(0)=0.
\end{equation}

\subsection{Derivation of the System of FDEs} \label{DerivOFsys}

Given $D=[-t_0,t_0]\subseteq (-\pi/2,\pi/2)$, suppose that we have a lens $(L,S)$ that solves the dichromatic problem in two dimensions.
We prove the following Theorem.

\begin{theorem}\label{eq:dichromatic theorem}
  Define $Z_1(t)=\varphi(t)$, $Z_2(t)=\rho(t)-\rho(0)$, and $Z_3(t)=\rho'(t).$ Then $Z(t)=(Z_1(t),Z_2(t),Z_3(t))$ solves a system of FDEs of the form \eqref{eq:system}, with $F=(F_1, F_2, F_3)$ given in \eqref{eq:F1}, \eqref{eq:F2}, \eqref{eq:F3} with the following choice of parameters $\rho_0=\rho(0)$, and $d_0=d_b(0)=d_r(0).$
\end{theorem}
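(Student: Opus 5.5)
The plan is to rewrite the geometric coincidence condition \eqref{phi}, ${\bf f}_r(t)={\bf f}_b(\varphi(t))$, entirely in terms of $\varphi$, $\rho$ and $\rho'$ and their values at $\varphi(t)$. The outcome should be a system $F(t,Z(t),Z(Z_1(t)),Z'(t),Z'(Z_1(t)))={\bf 0}$ with $Z(0)={\bf 0}$, for a map $F$ that depends only on the parameters $n_r,n_b,\rho_0,d_0$.

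\textbf{Step 1: express ${\bf f}_r$ and ${\bf f}_b$ through $(\rho,\rho')$.}
\begin{itemize}
\item By \eqref{eq:formula for normal}, $\boldsymbol\nu_L(t)$ is an explicit function $N(t,\rho(t),\rho'(t))$. It is smooth wherever $\rho>0$.
\item Substituting $N$ into \eqref{eq:refracted ray}--\eqref{eq:mu} with $n_1=1$ and $n_2=n_r$ (resp. $n_b$) gives ${\bf m}_r(t)=M_r(t,\rho(t),\rho'(t))$ and ${\bf m}_b(t)=M_b(t,\rho(t),\rho'(t))$. These are smooth, since $n_r,n_b>1$ makes the square root in \eqref{eq:mu} strictly positive.
\item Next, \eqref{ref:relation C} gives $C_r=d_0(n_r-1)$ and $C_b=d_0(n_b-1)$. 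Then \eqref{eq:d} yields $d_r(t)=\delta_r(t,\rho(t),\rho'(t))$, and similarly for $d_b$. The denominators $n-{\bf w}\cdot{\bf m}\ge n-1>0$, so these are smooth as well.
\item Writing $\rho(t)=\rho_0+Z_2(t)$ and $\rho'(t)=Z_3(t)$, and using $Z_1=\varphi$, we obtain ${\bf f}_r(t)=G_r(t,Z_2(t),Z_3(t))$ and ${\bf f}_b(\varphi(t))=G_b(Z_1(t),Z_2(Z_1(t)),Z_3(Z_1(t)))$, with $G_r,G_b$ smooth.
\end{itemize}

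\textbf{Step 2: turn the algebraic condition into a differential one.} Condition \eqref{phi} by itself contains no derivatives of $Z$, so I would differentiate it:
$$\frac{d}{dt}G_r(t,Z_2,Z_3)=Z_1'(t)\,\big[\partial_sG_b+\partial_{Z_2}G_b\,Z_3(Z_1(t))+\partial_{Z_3}G_b\,Z_3'(Z_1(t))\big].$$
In the bracket I have already substituted $Z_2'(Z_1(t))=Z_3(Z_1(t))$. The two components of this vector identity define $F_1$ and $F_2$. They involve $Z_1'(t)$ and $Z_3'(t)$ in the $\zeta$-slot and $Z_3'(Z_1(t))$ in the $\xi$-slot, together with the undifferentiated values of $Z$. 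The third equation is $F_3:=\zeta_2-\alpha_3$, which encodes $Z_2'=Z_3$.

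\textbf{Step 3: initial condition and equivalence.}
\begin{itemize}
\item $Z(0)={\bf 0}$ holds because $\varphi(0)=0$ by \eqref{eq:phi at 0}, $Z_2(0)=0$ by definition, and $\rho'(0)=0$ by \eqref{eq: rho' at zero}.
\item Conversely, if the derivative identity holds and ${\bf f}_r(0)={\bf f}_b(0)$, then \eqref{phi} follows on $D$ by integration. The value at $0$ is guaranteed by $d_r(0)=d_b(0)=d_0$ and ${\bf m}_r(0)={\bf m}_b(0)$. This equivalence is what makes the system a faithful reformulation.
\item $F$ must be defined and $C^1$ on an open set $U$ containing the relevant points. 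This holds on $\{\alpha_2+\rho_0>0,\ \beta_2+\rho_0>0\}$ intersected with $|t|,|\alpha_1|<\pi/2$. If the solution is $C^2$ (since $\rho\in C^2$), then $(t,Z,Z(Z_1),Z',Z'(Z_1))\in U$ along it.
\end{itemize}

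\textbf{Main obstacle.} The delicate part is the bookkeeping in Step 2. The chain rule on $G_b\circ(Z_1,Z_2\circ Z_1,Z_3\circ Z_1)$ must be written so that every occurrence of a derivative is literally one of $Z'(t)$ or $Z'(Z_1(t))$, never something like $(Z\circ Z_1)'$. I would handle this by consistently replacing $Z_2'(Z_1)$ with $Z_3(Z_1)$ and keeping the factor $Z_1'(t)$ explicit. The resulting $F$ matches the form \eqref{eq:system}. This choice should also make $\nabla_\zeta F$ tractable later, when Theorem \ref{thm:Main Theorem} is applied.
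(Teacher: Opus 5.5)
Your derivation of $F_1$ and $F_3$ matches the paper's: differentiate \eqref{phi}, use \eqref{eq:der aux}, take the first component, and let $F_3$ encode $Z_2'=Z_3$. To obtain \eqref{eq:F1} literally, keep $Z_2'(t)$ and $Z_2'(Z_1(t))$ in the $\zeta_2,\xi_2$ slots rather than substituting $Z_3$.

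The gap is $F_2$. The statement fixes $F$ to be the map of Definition \ref{def:F_i}. Its second component \eqref{eq:F2} is not the second component of ${\bf f}_r'(t)=\varphi'(t){\bf f}_b'(\varphi(t))$. It is the $t$-derivative of the normal-matching identity
\[
M_{r1}(t,u(t))\,\Lambda_b(\varphi(t),u(\varphi(t)))=M_{b1}(\varphi(t),u(\varphi(t)))\,\Lambda_r(t,u(t)),\qquad u=(\rho,\rho').
\]
Nothing in your plan produces this identity, because you never use refraction at the upper face at the shared point. The missing step is as follows. At ${\bf f}_r(t)={\bf f}_b(\varphi(t))$ both colors leave in direction ${\bf w}$. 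So by \eqref{eq:refracted ray} and \eqref{eq: mu 2r}, ${\bf m}_r(t)-\frac{1}{n_r}{\bf w}=\Lambda_r(t,u(t))\boldsymbol\nu$ and ${\bf m}_b(\varphi(t))-\frac{1}{n_b}{\bf w}=\Lambda_b(\varphi(t),u(\varphi(t)))\boldsymbol\nu$. Here $\boldsymbol\nu$ is one and the same unit normal, since the two curves share the point and, by your tangent identity, the tangent line. Comparing first components gives the displayed identity, and differentiating it with \eqref{eq:der aux} gives \eqref{eq:F2explicit}.

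This is not cosmetic, and your closing remark that your choice keeps $\nabla_\zeta F$ tractable is wrong. At $t=0$, $u=(\rho_0,0)$ we have ${\bf M}_r=(0,1)\perp\widetilde{\bf M}_r$, so $\widetilde M_{r2}=0$. Hence $\widetilde D_r=0$ and $\widetilde F_{r2}(0,\rho_0,0,v_1,v_2)=v_1$ for every $v$, and the same holds for $b$.

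Consequently the $\zeta$-gradient at $P$ of your second equation is $(0,1,0)$, the same row as $F_3$. If you also replace $Z_2'(t)$ by $Z_3(t)$, as your description suggests, it is the zero row. Either way $\nabla_\zeta F(P)$ is singular, and Theorem \ref{thm:Main Theorem} cannot be applied. Geometrically, $S$ has a horizontal tangent at the vertex, so the vertical component of the tangent relation carries no first-order information beyond $Z_2'=Z_3$. The Snell-based $F_2$ is exactly what makes \eqref{eq:nabla zeta F} invertible.

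Your system does buy one thing: its converse is a direct integration of both components. With the paper's $F_2$, the second component \eqref{eq:Colinearity 2} has to be recovered in Theorem \ref{thm:converse} through Proposition \ref{prop:quasicolinearity} and the collinearity Claim.
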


\subsubsection{Auxiliary Functions} Here, we define auxiliary functions and prove some properties that will be used in the proof of Theorem \ref{eq:dichromatic theorem} and in later sections.

We are given $n_r>1$, ${\bf w}=(0,1)$, and the parameters $\rho_0$, $d_0>0$. For $t\in \mathbb R$ and $u=(u_1,u_2)\in \mathbb R^2$, we define the following maps
{\small
\begin{equation}\label{eq:auxiliary}
\begin{cases}
         A_r(u) = \dfrac{1-n_r^2}{u_1+\sqrt{(u_1^2+u_2^2)(n_r^2-1)+u_1^2}}\\
         {\bf M}_r(t,u)=(M_{r1},M_{r2}) = \dfrac{1}{n_r}\left[(\sin t,\cos t)-A_r(u)\left(u_1\sin t-u_2\cos t, u_1\cos t+u_2\sin t\right)\right]\\
        D_r(t,u) = \dfrac{(n_r-1)d_0-u_1(1-\cos t)}{n_r - {\bf w}\cdot {\bf M}_r(t,u)}\\
        {\bf F}_r(t,u)=(F_{r1},F_{r2})=u_1(\sin t,\cos t)+D_r(t,u){\bf M}_r(t,u)\\
        \Lambda_r(t,u)=\sqrt{1+\dfrac{1}{n_r^2}-\dfrac{2}{n_r}{\bf w}\cdot {\bf M}_r(t,u)}\\ \end{cases}
\end{equation}
}
\begin{remark}\label{rmk:Aux obs}  
We make the following observations.

First, notice that $A_r$ and ${\bf M}_r$ are analytic for $t\in \mathbb R$, $u_1>0$, $u_2\in \mathbb R$. The same holds for the remaining functions since 
\begin{equation}\label{eq:unit}
    |{\bf M}_r(t,u)|=1.
\end{equation}
To prove \eqref{eq:unit}, notice that
\begin{align*}
    {\bf M}_r(t,u)\cdot {\bf M}_r(t,u)&=\dfrac{1}{n_r^2}\left[1-2A_r(u)u_1+A_r(u)^2(u_1^2+u_2^2)\right]\\
\end{align*}
From \eqref{eq:auxiliary}, 
$
    A_r(u)=\dfrac{u_1-\sqrt{(u_1^2+u_2^2)(n_r^2-1)+u_1^2}}{u_1^2+u_2^2}.
$
Then,
$$
A_r(u)^2(u_1^2+u_2^2)
=\dfrac{2u_1^2+(n_r^2-1)(u_1^2+u_2^2)-2u_1\sqrt{\left(u_1^2+u_2^2\right)(n_r^2-1)+u_1^2}}{u_1^2+u_2^2}=n_r^2-1+2u_1A_r(u),
$$
and \eqref{eq:unit} follows.
    
Second, we calculate the auxiliary functions in \eqref{eq:auxiliary} at $t=0$ and $ u=(\rho_0,0)$ and obtain
{\small
\begin{align}\label{eqs:identity at 0}
A_r(\rho_0,0)=\dfrac{1-n_r}{\rho_0},\quad {\bf M}_r(0,\rho_0,0&)=(0,1), \quad D_r(0,\rho_0,0)=d_0,\\ {\bf F}_r(0,\rho_0,0)=(0,\rho_0+d_0)&, 
\quad \Lambda_r(0,\rho_0,0)=\dfrac{n_r-1}{n_r}.\notag
\end{align}
}
Moreover, the auxiliary functions satisfy the following symmetry properties
\begin{equation}\label{eq:symmetry nontilde}
    \begin{cases}
       A_r(u_1, -u_2)= A_r(u_1,u_2)\\
       {\bf M}_r(-t,u_1,-u_2) = \left(-M_{r1} (t,u_1,u_2), M_{r2}(t,u_1,u_2)\right) \\
       D_r(-t, u_1, -u_2) = D_r(t, u_1, u_2) \\
       {\bf F}_r (-t, u_1, -u_2) = \left(-F_{r1}(t,u_1,u_2) , F_{r2} (t,u_1,u_2)\right)\\
       \Lambda_r (-t, u_1, -u_2) = \Lambda_r (t,u_1, u_2)
 \end{cases}.
\end{equation}

Finally, we observe that $u_1+n_rD_r-{\bf w}\cdot { \bf F}_r$ is a constant independent of $t$ and $u$. In fact,
\begin{align*}
    u_1+n_rD_r(t,u)-{\bf w}\cdot {\bf F}_r(t,u)&=u_1+n_rD_r(t,u)-u_1{\bf w}\cdot (\sin t,\cos t)+D_r(t,u){\bf w}\cdot {\bf M}_r(t,u)\\
    &=u_1(1-\cos  t)+D_r(t,u)(n_r-{\bf w}\cdot {\bf M}_r(t,u)),
\end{align*}
concluding that
\begin{equation}\label{eq:symbolic optical path}
u_1  +n_rD_r(t,u)-{\bf w}\cdot {\bf F}_r(t,u)=(n_r-1)d_0.
\end{equation}
\end{remark}

Next, we introduce additional auxiliary functions with $n_r>1$, $t\in \mathbb R$, ${\bf w}=(0,1)$, $\rho_0,d_0>0$, and $u=(u_1,u_2)$, $v=(v_1,v_2)\in \mathbb R^2$  

{\footnotesize
\begin{equation}
\begin{cases} \label{eq:aux with tilde}
    \widetilde{A}_r(u,v) = \dfrac{[A_r(u)]^2}{n_r^2-1} \left(v_1+\dfrac{\left(u_1v_1+u_2v_2\right)\left(n_r^2-1\right)+u_1v_1}{\sqrt{(u_2^2 + u_1^2)(n_r^2-1)+u_1^2}}\right)\\
   \widetilde{{\bf M}}_r(t,u,v)=(\widetilde{M}_{r1},\widetilde{M}_{r2})= \frac{1}{n_r}\big((\cos{t}, -\sin{t})- \widetilde {A}_r(u,v) (u_1\sin{t} -u_2\cos{t}, u_1 \cos t+u_2\sin {t}) \\ 
     \qquad\qquad\qquad\qquad\qquad \qquad -A_r(u)((u_1-v_2) \cos{t}+(u_2+v_1) \sin{t},(v_2-u_1)\sin{t}+(v_1+u_2)\cos{t})\big) \\
   \widetilde{D}_r(t,u,v) = \dfrac{ (v_1(\cos{t}-1) - u_1 \sin{t})  + D_r(t,u){\bf w}\cdot \widetilde{{\bf M}}_{r}(t,u, v) } {n_r- {\bf w}\cdot {\bf M}_{r} (t,u) } \\
    \widetilde{{\bf F}}_r(t,u,v)=(\widetilde{F}_{r1},\widetilde{F}_{r2})=
   u_1 (\cos{t},-\sin{t})+v_1(\sin{t}, \cos{t})+ \widetilde{{\bf M}}_r(t,u,v)D_r (t,u)+ {\bf M}_r(t,u) \widetilde{D}_r(t,u,v)\\
   \widetilde{\Lambda}_{r}(t,u,v)=-\dfrac{1}{n_r}\dfrac{{\bf w}\cdot \widetilde{{\bf M}}_r(t,u,v)}{\Lambda_r(t,u)} 
\end{cases}
\end{equation}
}

\begin{remark}\label{rmk:Aux Additional obs}
The functions in \eqref{eq:aux with tilde} are analytic for $t\in \mathbb R$, $u_1>0$, and $u_2,v_1,v_2\in \mathbb R$.

For $t=0$, $u=(\rho_0,0),$ and $ v=(0,v_2)$, we have from \eqref{eqs:identity at 0}
\begin{align}\label{eqs:identities for tilde}
\widetilde{A}_r(\rho_0,&0,0,v_2)=0,\quad \widetilde{{\bf M}}_r (0,\rho_0,0,0,v_2)=\left(1-\frac{n_r-1}{n_r}  \frac{v_2}{\rho_0},0\right),\quad  \widetilde{D}_r(0,\rho_0,0,0,v_2)=0,\\ 
 &\widetilde{{\bf F}}_r(0,\rho_0,0,0,v_2)=\left(\rho_0+d_0\left(1-\frac{n_r-1}{n_r} \frac{v_2}{\rho_0}\right),0\right),
\quad \widetilde{\Lambda}_{r}(0,\rho_0,0,0,v_2)=0.\notag
\end{align}
Moreover, from \eqref{eq:symmetry nontilde}, the functions in \eqref{eq:aux with tilde}  satisfy the following symmetry properties
\begin{equation}\label{eq:symmetry tilde}
\begin{cases}
     \widetilde{A}_r(u_1, -u_2,-v_1, v_2)= -\widetilde{A}_r(u_1,u_2,v_1,v_2)\\
       \widetilde{{\bf M}}_r(-t,u_1,-u_2,-v_1, v_2) = \left(\widetilde{M}_{r1} (t,u_1,u_2,v_1,v_2),-\widetilde{M}_{r2}(t,u_1,u_2,v_1,v_2)\right) \\
       \widetilde{D}_r(-t, u_1, -u_2,-v_1, v_2) = -\widetilde{D}_r(t, u_1, u_2,v_1, v_2) \\
       \widetilde{{\bf F}}_r (-t, u_1, -u_2,-v_1, v_2) = \left(\widetilde{F}_{r1}(t,u_1,u_2, v_1, v_2) , -\widetilde{F}_{r2} (t,u_1,u_2,v_1,v_2)\right)\\
      \widetilde{\Lambda}_{r} (-t, u_1, -u_2,-v_1, v_2) = - \widetilde{\Lambda}_{r} (t,u_1, u_2, v_1, v_2)
       \end{cases}.
\end{equation}

Finally, notice that for a differentiable map $u(t)=(u_1(t), u_2(t))$, we have
\begin{align}\label{eq:der aux}
\dfrac{d}{dt}\left[A_r(u(t))\right]&=\widetilde{A}_r(u(t),u'(t)),\qquad \dfrac{d}{dt}\left[{\bf M}_r(t,u(t))\right]=\widetilde{{\bf M}}_r(t,u(t),u'(t)),\qquad \dfrac{d}{dt}\left[D_r(t,u(t))\right]=\widetilde{D}_r(t,u(t),u'(t))\\
 &\dfrac{d}{dt} \left[{\bf F}_r(t,u(t))\right]=\widetilde {\bf F}_r(t,u(t),u'(t)),\qquad \dfrac{d}{dt}\left[\Lambda_r(t,u(t))\right]=\widetilde{\Lambda}_{r}(t,u(t),u'(t))\notag.
\end{align}
\end{remark}

We prove the following formula which will be used in Section \ref{sec: converse of theorem}.

\begin{proposition}\label{prop:quasicolinearity}
    Given a differentiable function $u(t)=(u_1(t),u_2(t))$ with $t\in \mathbb R$ and $u_1(t)>0$, we have
$$
\left({\bf M}_r(t,u(t))-\dfrac{1}{n_r}{\bf w}\right)\cdot {\widetilde{\bf F}}_r(t,u(t),u'(t))=\dfrac{1}{n_r}A_r(u(t))(u_2(t)-u_1'(t))u_1(t)
$$
\end{proposition}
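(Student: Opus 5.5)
The plan is to use the derivative identities \eqref{eq:der aux} to write $\widetilde{\bf F}_r(t,u(t),u'(t))=\frac{d}{dt}{\bf F}_r(t,u(t))$. We then split ${\bf F}_r$ into its two pieces and differentiate each:
$$\widetilde{\bf F}_r=u_1'(\sin t,\cos t)+u_1(\cos t,-\sin t)+\widetilde D_r{\bf M}_r+D_r\widetilde{\bf M}_r .$$
The idea is to dot this with ${\bf M}_r-\frac1{n_r}{\bf w}$ and show that every term except one collapses.

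Differentiating $|{\bf M}_r|=1$ from \eqref{eq:unit} gives ${\bf M}_r\cdot\widetilde{\bf M}_r=0$. Differentiating the optical path identity \eqref{eq:symbolic optical path} along $u(t)$ gives
$$u_1'+n_r\widetilde D_r={\bf w}\cdot\widetilde{\bf F}_r .$$
Using these two relations, the terms $\widetilde D_r({\bf M}_r-\frac1{n_r}{\bf w})\cdot{\bf M}_r$ and $D_r({\bf M}_r-\frac1{n_r}{\bf w})\cdot\widetilde{\bf M}_r$ are arranged to cancel against the ${\bf w}$-component. Concretely, I would write
$$\Big({\bf M}_r-\tfrac1{n_r}{\bf w}\Big)\cdot\widetilde{\bf F}_r={\bf M}_r\cdot\widetilde{\bf F}_r-\tfrac1{n_r}\big(u_1'+n_r\widetilde D_r\big).$$
In the first term, ${\bf M}_r\cdot\widetilde{\bf F}_r=u_1'\,{\bf M}_r\cdot{\bf x}+u_1\,{\bf M}_r\cdot{\bf x}^\perp+\widetilde D_r$, where ${\bf x}=(\sin t,\cos t)$ and ${\bf x}^\perp=(\cos t,-\sin t)$. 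The $\widetilde D_r$ terms then cancel, leaving
$$u_1'\Big({\bf M}_r\cdot{\bf x}-\tfrac1{n_r}\Big)+u_1\,{\bf M}_r\cdot{\bf x}^\perp .$$

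The remaining step is direct computation from the definition of ${\bf M}_r$ in \eqref{eq:auxiliary}. Writing $N=(u_1\sin t-u_2\cos t,\;u_1\cos t+u_2\sin t)$, we have $N\cdot{\bf x}=u_1$ and $N\cdot{\bf x}^\perp=-u_2$. Hence
$${\bf M}_r\cdot{\bf x}=\frac1{n_r}(1-A_ru_1),\qquad {\bf M}_r\cdot{\bf x}^\perp=\frac1{n_r}A_ru_2 .$$
Substituting these gives
$$u_1'\Big(-\tfrac1{n_r}A_ru_1\Big)+u_1\,\tfrac1{n_r}A_ru_2=\tfrac1{n_r}A_r\,u_1\,(u_2-u_1'),$$
which is the claimed formula.

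The main obstacle is bookkeeping: checking that the $D_r\widetilde{\bf M}_r$ contribution vanishes and that the optical-path derivative is applied with the correct signs. The relation $D_r\,{\bf M}_r\cdot\widetilde{\bf M}_r=0$ handles the ${\bf M}_r$ part of that term, while its ${\bf w}$ part is absorbed implicitly through ${\bf w}\cdot\widetilde{\bf F}_r$ in the differentiated path identity. So no explicit formula for $\widetilde{\bf M}_r$ or $\widetilde D_r$ from \eqref{eq:aux with tilde} is ever needed; we use only the identities \eqref{eq:der aux}.
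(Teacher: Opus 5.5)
Your proposal is correct and follows essentially the same route as the paper. Both arguments combine ${\bf M}_r\cdot\widetilde{\bf M}_r=0$ (from differentiating \eqref{eq:unit}) with the differentiated optical path identity ${\bf w}\cdot\widetilde{\bf F}_r=u_1'+n_r\widetilde D_r$, then evaluate ${\bf M}_r\cdot(u_1'{\bf x}+u_1{\bf x}^\perp)$ directly from the definition of ${\bf M}_r$; your computations $N\cdot{\bf x}=u_1$ and $N\cdot{\bf x}^\perp=-u_2$ are right and reproduce the paper's intermediate expression for ${\bf M}_r\cdot\widetilde{\bf F}_r$.
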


\begin{proof}
  From \eqref{eq:unit} and \eqref{eq:der aux},  $${\bf M}_r(t,u(t))\cdot {\widetilde {\bf M}}_r(t,u(t),u'(t))=0.$$
  Hence, from the expressions of ${\widetilde{\bf F}}_r$ in \eqref{eq:aux with tilde} and ${\bf M}_r$ in \eqref{eq:auxiliary},
we obtain that
{\small
\begin{align*}
{\bf M}_r(t,u(t))\cdot {\widetilde{\bf F}}_r(t,u(t),u'(t))&=\left(u_1(t)\cos t+u_1'(t)\sin t,-u_1(t)\sin t+u_1'(t)\cos t\right)\cdot {\bf M}_r(t,u(t))+{\widetilde D}_r(t,u(t),u'(t))\\
&=\dfrac{1}{n_r}\left(u_1'(t)-A_r(u(t))\left(u_1(t)u_1'(t)-u_1(t)u_2(t)\right)\right)+\widetilde{D}_r(t,u(t),u'(t))\\
&=\dfrac{1}{n_r}\left(u_1'(t)+n_r\widetilde{D}_r(t,u(t),u'(t))\right)+\dfrac{1}{n_r}A_r(u(t))(u_2(t)-u_1'(t))u_1(t).
\end{align*}
}
From \eqref{eq:symbolic optical path}, for every $t\in D$,
$$u_1(t)+n_rD_r(t,u(t))-{\bf w}\cdot {\bf F}_r(t,u(t))=(n_r-1)d_0.$$
Differentiating with respect to $t$ and using \eqref{eq:der aux}, we get that
$${\bf w}\cdot {\widetilde{\bf F}}_r(t,u(t),u'(t))=u_1'(t)+n_r\widetilde{D}_r(t,u(t),u'(t)).$$
We conclude that
$${\bf M}_r(t,u(t))\cdot {\widetilde{\bf F}}_r(t,u(t),u'(t))=\dfrac{1}{n_r}{\bf w}\cdot {\widetilde{\bf F}}_r(t,u(t),u'(t))+\dfrac{1}{n_r}A_r(u(t))(u_2(t)-u_1'(t))u_1(t),$$
and the proposition follows.
\end{proof}

For $\omega_b$ with $n_b>1$, with $r$ replaced by $b$, we define auxiliary functions similar to those in \eqref{eq:auxiliary} and \eqref{eq:aux with tilde}.

Now, we define $F=(F_1,F_2, F_3)$ corresponding to the system of FDEs \eqref{eq:system}. 

\begin{definition}\label{def:F_i}
 Given $n_r,n_b>1$, and the parameters $\rho_0,d_0>0$.
 Consider the open domain $U\subseteq \mathbb R^{13}$ 
 $$U=\{(t,\alpha,\beta,\zeta,\xi):t\in \mathbb R;\, \alpha,\beta,\zeta,\xi\in \mathbb R^3;\, \alpha_2,\beta_2>-\rho_0\}.$$ We define the map $F=(F_1,F_2,F_3)$ on $U$
as follows
\begin{equation}\label{eq:F1}
F_1(t,\alpha,\beta,\zeta,\xi)=
\widetilde{F}_{r1}(t,\alpha_2+\rho_0,\alpha_3,\zeta_2,\zeta_3)-\zeta_1\widetilde{F}_{b1}(\alpha_1
,\beta_2+\rho_0,\beta_3,\xi_2,\xi_3),\\
\end{equation}
\begin{equation} \label{eq:F2}
    \begin{aligned}
    F_2(t,\alpha,\beta,\zeta,\xi)&= 
     \widetilde{M}_{r1}(t,\alpha_2+\rho_0,\alpha_3,\zeta_2,\zeta_3)\Lambda_b(\alpha_1, \beta_2 + \rho_0,\beta_3)\\
    &\qquad + \zeta_1 M_{r1}(t,\alpha_2 + \rho_0,\alpha_3)\widetilde{\Lambda}_{b}(\alpha_1,\beta_2+\rho_0,\beta_3,\xi_2,\xi_3)\\
    &\qquad\quad-\zeta_1\widetilde{M}_{b1}(\alpha_1,\beta_2+\rho_0,\beta_3,\xi_2,\xi_3)\Lambda_r(t, \alpha_2 + \rho_0,\alpha_3)\\
    &\qquad\qquad\quad- M_{b1}(\alpha_1, \beta_2 + \rho_0,\beta_3)\widetilde{\Lambda}_{r}(t,\alpha_2+\rho_0,\alpha_3,\zeta_2,\zeta_3),
    \end{aligned} 
\end{equation}
and 
\begin{equation}\label{eq:F3}
F_3(t,\alpha,\beta,\zeta,\xi)=\zeta_2-\alpha_3.
\end{equation}
From Remarks \ref{rmk:Aux obs} and \ref{rmk:Aux Additional obs}, $F$ is analytic in $U$. 
\end{definition}
We are interested in the system \eqref{eq:system} with $F_1,F_2,$ and $F_3$ given above. More precisely, we are looking for a $C^1$ function $Z(t)=(Z_1(t),Z_2(t),Z_3(t))$ in a neighborhood of $t=0$ such that $Z(0)={\bf 0}$ and
{\small
\begin{equation}\label{eq:F1explicit}
\begin{aligned}
\widetilde{F}_{r1}&(t,Z_2(t)+\rho_0,Z_3(t),Z_2'(t),Z_3'(t))\\
&\qquad-Z_1'(t)\widetilde{F}_{b1}(Z_1(t),Z_2(Z_1(t))+\rho_0,Z_3(Z_1(t)),Z_2'(Z_1(t)),Z_3'(Z_1(t)))=0
\end{aligned}
\end{equation}

\begin{equation}\label{eq:F2explicit}
\begin{aligned}
\widetilde{M}_{r1}&(t,Z_2(t)+\rho_0,Z_3(t),Z_2'(t),Z_3'(t))
    \Lambda_b(Z_1(t), Z_2(Z_1(t))+\rho_0, Z_3(Z_1(t)))\\
     &\quad+Z_1'(t)M_{r1}(t,Z_2(t)+\rho_0,Z_3(t))\widetilde{\Lambda}_{b}(Z_1(t),Z_2(Z_1(t))+\rho_0,Z_3(Z_1(t)),Z_2'(Z_1(t)),Z_3'(Z_1(t)))\\
    & \quad\quad- Z_1'(t)\widetilde{M}_{b1}(Z_1(t),Z_2(Z_1(t))+\rho_0,Z_3(Z_1(t)),Z_2'(Z_1(t)),Z_3'(Z_1(t)))\Lambda_r(t,Z_2(t)+\rho_0,Z_3(t)) \\
    &\quad\quad\quad\quad-M_{b1}(Z_1(t), Z_2(Z_1(t))+\rho_0, Z_3(Z_1(t)))\widetilde{\Lambda}_{r}(t,Z_2(t)+\rho_0,Z_3(t),Z_2'(t),Z_3'(t))=0
\end{aligned}
\end{equation}

\begin{equation}\label{eq:F3explicit}
    Z_2'(t)-Z_3(t) = 0
\end{equation}}
\subsubsection{Proof of Theorem \ref{eq:dichromatic theorem}} \label{optics_aux}
We assume that there exist $\rho$, $C_r$, $C_b$, and $\varphi$ such that the corresponding lens $(L,S)$ is a solution to the dichromatic problem as described in Section \ref{sec:Setup}. We let $\rho_0=\rho(0)$, $d_0=d_r(0)=d_b(0)$. 

We express the functions related to the optical problem in terms of the auxiliary functions in \eqref{eq:auxiliary}. Define $u(t)=(\rho(t),\rho'(t))$. From \eqref{eq: rho' at zero},  $u(0)=(\rho_0,0)$.
From \eqref{eq:refracted ray}, the incident ray with frequency $\omega_r$ and direction ${\bf x}(t)=(\sin t,\cos t)$ refracts inside the lens with direction
$ {\bf m}_r(t)=\dfrac{1}{n_r}\left({\bf x}(t)-\lambda_{L,r}(t) \boldsymbol{\nu}_{L}(t)\right).$
From \eqref{eq:formula for normal},
\[{\bf x}(t)\cdot \boldsymbol{\nu}_{L}(t)= \dfrac{\rho(t)}{\sqrt{\rho(t)^2+\rho'(t)^2}},\]and so from \eqref{eq:mu}
\begin{align*}
\lambda_{L,r}(t)=\dfrac{(1-n_r^2)\sqrt{\rho(t)^2+\rho'(t)^2}}{\rho(t)+\sqrt{(n_r^2-1)\left(\rho(t)^2+\rho'(t)^2\right)+\rho(t)^2}}.
\end{align*}
Replacing the expressions of $\lambda_{L,r}$ and $\boldsymbol{\nu}_{L}$ in ${\bf m}_r$,
we get that
\begin{equation}\label{eq:mrM r}
\text{\footnotesize ${\bf m}_r(t)=\dfrac{1}{n_r}\left[(\sin t,\cos t)-\dfrac{(1-n_r^2)\left(\rho(t)\sin t-\rho'(t)\cos t, \rho'(t)\sin t+\rho(t)\cos (t)\right)}{\rho(t)+\sqrt{(n_r^2-1)\left(\rho(t)^2+\rho'(t)^2\right)+\rho(t)^2}}\right]={\bf M}_r(t,u(t)).$}
\end{equation}

From \eqref{eq:d} and \eqref{ref:relation C},
\begin{equation}\label{eq:drDr}
d_r(t)=\dfrac{C_r-\rho(t)(1-{\bf w}\cdot (\sin t,\cos t))}{n_r-{\bf w}\cdot {\bf m}_r(t)}=D_r(t,u(t)).   
\end{equation}
Hence, 
\begin{equation}\label{eq:frFr}
{\bf f}_r(t)=\rho(t){\bf x}(t)+d_r(t){\bf m}_r(t)={\bf F}_r(t,u(t)).
\end{equation}

At the point ${\bf f}_r(t)$, the ray with direction ${\bf m}_r(t)$ is refracted into the direction ${\bf w}=(0,1)$. Then, by Snell's law \eqref{eq:refracted ray},
$${\bf w}-\dfrac{1}{n_r}{\bf m_r}(t)=\lambda_{S,r}(t)\boldsymbol \nu_{S_r}(t),$$ with $\lambda_{S,r}>0$ and $\boldsymbol \nu_{S_r}(t)$ the unit normal to $S_r$ at ${\bf f}_r(t)$. Therefore,

\begin{equation}\label{eq: mu 2r}
  \lambda_{S,r}(t)=\left|{\bf w}-\dfrac{1}{n_r}{\bf m_r}(t)\right|=\sqrt{1+\dfrac{1}{n_r^2}-\dfrac{2}{n_r}{\bf w\cdot {\bf m_r}(t)}}=\Lambda_r(t,u(t)).
\end{equation}
Similarly for the rays with frequency $\omega_b$, ${\bf m}_b(t)={\bf M}_b(t,u(t))$, $d_b(t)=D_b(t,u(t))$, ${\bf f}_b(t)={\bf F}_b(t,u(t))$, and $\lambda_{S,b}(t)=\Lambda_b(t,u(t)).$

We have $Z_1(t)=\varphi(t)$, $Z_2(t)=\rho(t)-\rho(0)$, and $Z_3(t)=\rho'(t)$. From \eqref{eq:phi at 0} and \eqref{eq: rho' at zero}, $Z(0)={\bf 0}$.
Additionally, notice that $u(t)=(\rho(t),\rho'(t))=(Z_2(t)+\rho_0,Z_3(t))$ and $u'(t)=(Z_2'(t),Z_3'(t))$.

Differentiating \eqref{phi} with respect to $t$, we get
${\bf f}_r'(t)=\varphi'(t){\bf f}_b'(\varphi(t)).$ Then, from \eqref{eq:der aux} and \eqref{eq:frFr},
$$\widetilde{F}_{r1}(t,u(t),u'(t))=Z_1'(t)\widetilde{F}_{b1}(Z_1(t),u(Z_1(t)),u'(Z_1(t))).$$
Hence, \eqref{eq:F1explicit} follows.

At the point ${\bf f}_r(t)={\bf f}_b(\varphi(t))$, the rays ${\bf m}_r(t)$ and ${\bf m}_b(\varphi(t))$  are refracted by S  into ${\bf w}=(0,1)$, see Figure \ref{fig: diagram}. Then, by Snell's law \eqref{eq:refracted ray},
$$
    {\bf m}_r(t) - \dfrac{1}{n_r}{\bf w} =\lambda_{S,r}(t)\boldsymbol \nu_{S_r}(t) \qquad and\qquad
    {\bf m}_b(\varphi(t)) - \dfrac{1}{n_b}{\bf w} = \lambda_{S,b}(\varphi(t))\boldsymbol \nu_{S_b}(\varphi(t)).
$$
Since $\boldsymbol{\nu}_{S_r}(t)=\boldsymbol{\nu}_{S_b}(\varphi(t))$, by comparing the first components of the above equations, we get
\begin{equation*}
    m_{r1}(t)\lambda_{S,b}(\varphi(t)) = m_{b1}(\varphi(t))\lambda_{S,r}(t).
\end{equation*}
Differentiating both sides and using  \eqref{eq:der aux}, \eqref{eq:mrM r} and \eqref{eq: mu 2r}, we obtain

\begin{align*}
\widetilde{M}_{r1}&(t,u(t),u'(t))
    \Lambda_b(Z_1(t), u(Z_1(t))) +Z_1'(t)M_{r1}(t,u(t))\widetilde{\Lambda}_{b}(Z_1(t),u(Z_1(t)),u'(Z_1(t)))\\
    &\quad = Z_1'(t)\widetilde{M}_{b1}(Z_1(t),u(Z_1(t)),u'(Z_1(t)))\Lambda_r(t,u(t))+M_{b1}(Z_1(t),u(Z_1(t)))\widetilde{\Lambda}_{r}(t,u(t),u'(t)).
\end{align*}
Hence, \eqref{eq:F2explicit} follows.

Finally, \eqref{eq:F3explicit} follows from the fact that   $Z_2'(t)=\rho'(t)=Z_3(t)$. 
\qed

\subsection{Converse of Theorem \ref{eq:dichromatic theorem}}\label{sec: converse of theorem}
In this section, we show that from a solution (if it exists) to \eqref{eq:system},  with $F$ as 
in Definition \ref{def:F_i}, we can construct a lens that solves the dichromatic problem.
\begin{theorem}\label{thm:converse}
Let $\rho_0, d_0>0$, $n_b>n_r>1$, ${\bf w}=(0,1)$, and $F=(F_1,F_2,F_3)$, with $F_i$ defined in \eqref{eq:F1}, \eqref{eq:F2} and \eqref{eq:F3}. Suppose there exists ${\bf p}=(p_1,p_2,p_3)$ such that $F(0,{\bf 0}, {\bf 0}, {\bf p}, {\bf p})={\bf 0}$ and 
\begin{equation}\label{eq:p1}
    0<|p_1|<1.
\end{equation}
Assume the system
\begin{equation*}
    \begin{cases}
    F(t,Z(t),Z(Z_1(t)),Z'(t),Z'(Z_1(t)))={\bf 0} \\
    Z(0)={\bf 0}
    \end{cases}
\end{equation*}
has a $C^1$ solution in a neighborhood of $t=0$ with $Z'(0)={\bf p}$.

Let $u(t)=(Z_2(t)+\rho_0,Z_3(t))$.
Then, there exists $t_0>0$ such that for $|t|\leq t_0$
\begin{enumerate}[label=\roman*)]
    \item $|Z_1(t)|\leq |t|$,
    \item $Z_2(t)+\rho_0>0$,
    \item ${\bf M}_r(t,u(t))\cdot {\bf w}\geq \dfrac{1}{n_r}$ and $ {\bf M}_b(t,u(t)) \cdot {\bf w}\geq \dfrac{1}{n_b}$,
    \item $D_r(t,u(t)),D_b(t,u(t))>0$,
    \item $\widetilde {\bf F}_r(t,u(t),u'(t)),\widetilde {\bf F}_b(t,u(t),u'(t))\neq {\bf 0}$,
    \item ${\bf F}_r(t,u(t))={\bf F}_b(Z_1(t),u(Z_1(t))).$
\end{enumerate}



\end{theorem}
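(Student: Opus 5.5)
The plan is to get items i)--v) from the values of the auxiliary functions at $t=0$ together with continuity, and item vi) by integrating the equations $F_1=0$ and $F_2=0$, using Proposition \ref{prop:quasicolinearity} for the second component.

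\textbf{Data at $t=0$.} Since $Z(0)={\bf 0}$, we have $u(0)=(\rho_0,0)$ and $u'(0)=(p_2,p_3)$. Evaluating \eqref{eq:F3} at $P$ gives $p_2=\alpha_3=0$, so $u'(0)=(0,p_3)$.

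\textbf{Items i)--iv).}
\begin{itemize}
\item For i): $Z_1'(0)=p_1$ and $|p_1|<1$, so by continuity $|Z_1'|<1$ on a small interval. The mean value theorem with $Z_1(0)=0$ then gives $|Z_1(t)|\le |t|$.
\item For ii): this follows from $Z_2(0)+\rho_0=\rho_0>0$ and continuity.
\item For iii): by \eqref{eqs:identity at 0}, ${\bf M}_r(0,\rho_0,0)={\bf w}$. Hence ${\bf M}_r\cdot{\bf w}=1>1/n_r$ at $t=0$, and continuity gives the inequality nearby; the same holds for $b$.
\item For iv): $D_r(0,\rho_0,0)=D_b(0,\rho_0,0)=d_0>0$, and we again use continuity.
\end{itemize}

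\textbf{Item v).} By continuity it suffices to show $\widetilde{\bf F}_r$ and $\widetilde{\bf F}_b$ are nonzero at $t=0$. Set $a_r=(n_r-1)/n_r$ and $a_b=(n_b-1)/n_b$. By \eqref{eqs:identities for tilde},
\[
\widetilde F_{r1}(0)=\rho_0+d_0\left(1-a_r p_3/\rho_0\right),\qquad \widetilde F_{b1}(0)=\rho_0+d_0\left(1-a_b p_3/\rho_0\right).
\]
Suppose $\widetilde F_{r1}(0)=0$. Then the equation $F_1(P)=0$, which reads $\widetilde F_{r1}(0)=p_1\widetilde F_{b1}(0)$, together with $p_1\neq0$ forces $\widetilde F_{b1}(0)=0$ as well. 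This gives
\[
1-a_rp_3/\rho_0=-\rho_0/d_0=1-a_bp_3/\rho_0 .
\]
Here $p_3\ne0$, since otherwise $1=-\rho_0/d_0$. Therefore $a_r=a_b$, contradicting $n_b>n_r$. Hence $\widetilde F_{r1}(0)\neq0$, and so $\widetilde F_{b1}(0)=\widetilde F_{r1}(0)/p_1\neq0$ too.

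\textbf{Item vi), first component.} By \eqref{eq:der aux} and the chain rule, \eqref{eq:F1explicit} says
\[
\frac{d}{dt}\Big[F_{r1}(t,u(t))-F_{b1}(Z_1(t),u(Z_1(t)))\Big]=0.
\]
At $t=0$ both terms equal $0$ by \eqref{eqs:identity at 0}, using $Z_1(0)=0$. So the first components agree for all small $t$.

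\textbf{Item vi), second component (main obstacle).}
\begin{enumerate}
\item \eqref{eq:F2explicit} is exactly the derivative of
\[
M_{r1}(t,u(t))\,\Lambda_b(Z_1(t),u(Z_1(t)))-M_{b1}(Z_1(t),u(Z_1(t)))\,\Lambda_r(t,u(t)).
\]
This expression vanishes at $t=0$ because $M_{r1}=M_{b1}=0$ there, so it is identically zero.
\item Define the unit vectors
\[
\boldsymbol\nu_r(t)=\big({\bf M}_r(t,u(t))-{\bf w}/n_r\big)/\Lambda_r(t,u(t)),\qquad \boldsymbol\nu_b(s)=\big({\bf M}_b(s,u(s))-{\bf w}/n_b\big)/\Lambda_b(s,u(s)).
\]
Step 1 says $\boldsymbol\nu_r(t)$ and $\boldsymbol\nu_b(Z_1(t))$ have equal first components. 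Both have positive second component by iii) (strict near $0$). Hence $\boldsymbol\nu_r(t)=\boldsymbol\nu_b(Z_1(t))=:\boldsymbol\nu(t)$, with $\nu_2(t)>0$.
\item Since $u_2=Z_3=Z_2'=u_1'$ by \eqref{eq:F3explicit}, Proposition \ref{prop:quasicolinearity} gives $\widetilde{\bf F}_r(t,u(t),u'(t))\perp\boldsymbol\nu(t)$. Likewise $\widetilde{\bf F}_b(Z_1(t),u(Z_1(t)),u'(Z_1(t)))\perp\boldsymbol\nu(t)$.
\item Let $G(t)={\bf F}_r(t,u(t))-{\bf F}_b(Z_1(t),u(Z_1(t)))$. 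Then
\[
G'(t)=\widetilde{\bf F}_r(t,u(t),u'(t))-Z_1'(t)\,\widetilde{\bf F}_b(Z_1(t),u(Z_1(t)),u'(Z_1(t))),
\]
so $G'\perp\boldsymbol\nu$. The first component of $G'$ is zero by the previous paragraph, so $G'=(0,g)$ and $0=G'\cdot\boldsymbol\nu=g\,\nu_2$. Since $\nu_2>0$, we get $g=0$.
\item Finally $G(0)=(0,\rho_0+d_0)-(0,\rho_0+d_0)={\bf 0}$, so $G\equiv{\bf 0}$, which is vi).
\end{enumerate}

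The delicate points are the observation that \eqref{eq:F2explicit} is an exact derivative, and the use of Proposition \ref{prop:quasicolinearity} to pass from the equality of the normals to the second component of vi).
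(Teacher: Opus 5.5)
Your proposal is correct and follows essentially the same route as the paper. Items i)--v) come from continuity at $t=0$, with the same contradiction argument for v). Item vi) comes from integrating \eqref{eq:F1explicit} and \eqref{eq:F2explicit}, showing the two normals agree, and then applying Proposition \ref{prop:quasicolinearity}.

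Two steps differ in detail. You normalize to unit vectors in place of the paper's squaring-and-square-root argument for the collinearity Claim. You also get \eqref{eq:Colinearity 2} directly: $G'\perp\boldsymbol\nu$, $G'$ has vanishing first component, and $\nu_2>0$, so $G'=0$. This is slightly cleaner than the paper's step, which first asserts that $\widetilde{\bf F}_r$ and $\widetilde{\bf F}_b$ are collinear and then compares first components.
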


\begin{proof}
Inequality $i)$ follows in a neighborhood of $t=0$ from the fact that $Z_1(0)=0$, $Z_1'(0)=p_1$, and \eqref{eq:p1}. 
Similarly,  $\rho_0>0$ and $Z_2(0)$ give inequality $ii)$. 

We have $u(0)=(\rho_0,0)$, so from \eqref{eqs:identity at 0}
$$ {\bf M}_r(0,u(0)) \cdot {\bf w} = {\bf M}_b(0,u(0)) \cdot {\bf w}=1> \dfrac{1}{n_r}>\dfrac{1}{n_b},$$
which implies $iii)$. Also from \eqref{eqs:identity at 0}, $D_r(0,u(0))=D_b(0,u(0))=d_0>0$, obtaining $iv)$.

Next, we show $v)$. From \eqref{eq:F1explicit} at $t=0$,
$$
{\widetilde F}_{r1}(0,u(0),u'(0))=Z_1'(0) {\widetilde F}_{b1}(0,u(0),u'(0))
.$$
From \eqref{eq:p1}, $Z_1'(0)=p_1\neq 0$. Hence, $\widetilde F_{r1}(0,u(0),u'(0))=0$ if and only if $\widetilde F_{b_1}(0,u(0),u'(0))=0$.

From \eqref{eq:F3explicit} at $t=0$, $Z_2'(0)=Z_3(0)=0$. Then, $u'(0)=(Z_2'(0),Z_3'(0))=(0,p_3)$.
For the sake of contradiction, assume that ${\widetilde F}_{r1}(0,u(0),u'(0))={\widetilde F}_{b_1}(0,u(0),u'(0))=0$ which, using \eqref{eqs:identities for tilde}, implies that
$$
 \rho_0+d_0\left(1-\dfrac{n_r-1}{n_r}\dfrac{p_3}{\rho_0}\right)=\rho_0+d_0\left(1-\dfrac{n_b-1}{n_b}\dfrac{p_3}{\rho_0}\right)=0.
 $$
Since $n_r\neq n_b$ and $d_0>0$, the left equality implies that $p_3=0$ which,   substituted in the right equality, yields the contradiction   $\rho_0+d_0=0$. We then obtain $v)$.

It remains to show $vi)$. From \eqref{eq:F1explicit},
\begin{equation}\label{eq:Colinearity 1}
    \widetilde{F}_{r1}(t,u(t),u'(t))=Z_1'(t)\widetilde{F}_{b1}(Z_1(t),u(Z_1(t)),u'(Z_1(t))).
\end{equation}
We prove a similar identity for $\widetilde{F}_{r2}$ and $\widetilde{F}_{b2}$.

From \eqref{eq:F3explicit}, $u_1'(t)=Z_2'(t)=Z_3(t)=u_2(t)$. Therefore, from Proposition \ref{prop:quasicolinearity} 
\begin{equation}\label{eq:orthogonality}
    \begin{cases}
        \left({\bf M}_r(t,u(t))-\dfrac{1}{n_r}{\bf w}\right)\cdot{\widetilde{\bf F}}_r(t,u(t),u'(t))=0 \\
        \left({\bf M}_b(Z_1(t),u(Z_1(t)))-\dfrac{1}{n_b}{\bf w}\right)\cdot{\widetilde{\bf F}}_b(Z_1(t),u(Z_1(t)),u'(Z_1(t)))=0
    \end{cases}.
\end{equation}

\paragraph{\bf Claim.} The vectors ${\bf M}_r(t,u(t))-\dfrac{1}{n_r}{\bf w}$ and ${\bf M}_b(Z_1(t),u(Z_1(t)))-\dfrac{1}{n_b}{\bf w}$ are collinear. 

\begin{proof}[Proof of the claim]Integrating \eqref{eq:F2explicit} and using \eqref{eq:der aux}, we get that\\  
    $M_{r1}(t,u(t)) \Lambda_b(Z_1(t),u(Z_1(t)))=M_{b1}(Z_1(t),u(Z_1(t)))\Lambda_r(t,u(t))+c$.
Plugging $t=0$,  \eqref{eqs:identity at 0} implies that $c=0$ and so 
\begin{equation} \label{eq:M1_Lambda_ratio}
    M_{r1}(t,u(t)) \Lambda_b(Z_1(t),u(Z_1(t)))=M_{b1}(Z_1(t),u(Z_1(t)))\Lambda_r(t,u(t)).
\end{equation}

From \eqref{eq:auxiliary} and \eqref{eq:unit},
\begin{align*}
\Lambda_r(t,u(t))= \sqrt{1+\dfrac{1}{n_r^2}-\dfrac{2}{n_r} M_{r2}(t,u(t))} &= \sqrt{M_{r1}(t,u(t))^2+\left(M_{r2}(t,u(t))-\dfrac{1}{n_r}\right)^2},\\
\Lambda_b(Z_1(t),u(Z_1(t)))&= \sqrt{M_{b1}(Z_1(t),u(Z_1(t)))^2+\left(M_{b2}(Z_1(t),u(Z_1(t))-\frac{1}{n_b}\right)^2}
\end{align*}

Squaring \eqref{eq:M1_Lambda_ratio}, substituting the above expressions of $\Lambda_r$ and $\Lambda_b$, and simplifying, we get that
{\small
\begin{equation}\label{eq:squares}
\left(M_{r1}(t,u(t))\left(M_{b2}(Z_1(t),u(Z_1(t)))-\dfrac{1}{n_b}\right) \right)^2=\left(M_{b1}(Z_1(t),u(Z_1(t)))\left(M_{r2}(t,u(t))-\dfrac{1}{n_r}\right) \right)^2.
\end{equation}
}
From \eqref{eq:M1_Lambda_ratio} and since $\Lambda_r$,$\Lambda_b>0$, $M_{r1}(t,u(t))$ and $M_{b1}(Z_1(t),u(Z_1(t)))$ have the same sign. Consequently, taking the square root in \eqref{eq:squares} and using $iii)$, we obtain
\begin{equation*}
 M_{r1}(t,u(t))\left(M_{b2}(Z_1(t),u(Z_1(t))) - \frac{1}{n_b}\right) = M_{b1}(Z_1(t),u(Z_1(t)))\left(M_{r2}(t,u(t)) - \frac{1}{n_r}\right),  
\end{equation*}
and the claim follows.
\end{proof}

From the orthogonality in \eqref{eq:orthogonality} and the Claim, we conclude that the vectors $\widetilde{F}_r(t,u(t),u'(t))$ and $\widetilde{F}_b(Z_1(t),u(Z_1(t)),u'(Z_1(t)))$ are colinear. Therefore, from \eqref{eq:Colinearity 1}, we get
\begin{equation}\label{eq:Colinearity 2}
\widetilde{F}_{r2}(t,u(t),u'(t))=Z_1'(t)\widetilde{F}_{b2}(Z_1(t),u(Z_1(t)),u'(Z_1(t))).
\end{equation}
By integrating \eqref{eq:Colinearity 1} and \eqref{eq:Colinearity 2}, and using \eqref{eq:der aux},  
$${\bf F}_r(t,u(t))={\bf F}_b(Z_1(t),u(Z_1(t)))+(c_1,c_2).$$
Plugging $t=0$ and using \eqref{eqs:identity at 0}, we get that $c_1=c_2=0$, concluding $vi)$.
\end{proof}

    
\subsubsection{Interpretation of Theorem \ref{thm:converse}}\label{subsub:converse} With the setting of Theorem \ref{thm:converse}, define $\rho(t)=Z_2(t)+\rho_0$, $t\in [-t_0,t_0]$. From $ii)$, $\rho(t)>0$. Let $L=\{\rho(t){\bf x}(t)\}$ with ${\bf x}(t)=(\sin t,\cos t)$ be the curve separating vacuum and the medium with refractive indices $n_r$ and $n_b$ corresponding to $\omega_r$ and $\omega_b$ respectively. From Section \ref{optics_aux}, the ray emitted from the origin with frequency $\omega_r$ and direction ${\bf x}(t)$ is refracted by $L$ into the direction 
    $${\bf m}_r(t)={\bf M}_r(t,\rho(t),\rho'(t))={\bf M}_r(t,Z_2(t)+\rho_0,Z_3(t)).$$
    Let $C_r=(n_r-1)d_0$, and define
    $$d_r(t)=\dfrac{C_r-\rho(t)(1-{\bf x}(t)\cdot {\bf w})}{n_r-{\bf w}\cdot {\bf m}_r(t)}=D_r(t,Z_2(t)+\rho_0,Z_3(t)).$$ From $iv)$, $d_r(t)>0$. We then define $S_r$ parameterized by
    $${\bf f}_r(t)=\rho(t){\bf x}(t)+d_r(t){\bf m}_r(t)={\bf F}_r(t,Z_2(t)+\rho_0,Z_3(t)).$$
    From \eqref{eq:der aux} and $v)$, 
    ${\bf f}'_r(t)=\widetilde {\bf F}_r(t,Z_2(t)+\rho_0,Z_3(t),Z_2'(t),Z_3'(t))\neq 0$, and  hence $S_r$ has a normal at each point.
    Finally,  from $iii)$, ${\bf m}_r(t)\cdot {\bf w}\geq \dfrac{1}{n_r}$. Then, from Section \ref{prelim:monochromatic}, the lens $(L,S_r)$ refracts all emitted rays with frequency $\omega_r$ and directions ${\bf x}(t)$, for $t\in [-t_0,t_0]$, into the direction ${\bf w}$. Similarly, the lens $(L,S_b)$, with $S_b$ parameterized by ${\bf f}_b(t)={\bf F}_b(t,Z_2(t)+\rho_0,Z_3(t))$, refracts all emitted rays with frequency $\omega_b$ into the direction ${\bf w}$.
    
    Let $\varphi(t)=Z_1(t)$. From $i)$, $\varphi:[-t_0,t_0]\mapsto [-t_0,t_0]$, and from $vi)$
    $${\bf f}_r(t)={\bf f}_b(\varphi(t)).$$
    We conclude that the lens $(L,S)$, with $S=S_b$, solves the dichromatic problem for both frequencies $\omega_r$ and $\omega_b.$




\section{Existence of a Solution to the Dichromatic Problem}\label{sec:existence}

\subsection{The two dimensional case}\label{subsec:2D}
In section \ref{sec: converse of theorem}, we showed the equivalence between the existence of a solution to the dichromatic problem in two dimensions and the existence of a solution to the system of FDEs \eqref{eq:system} with $F$ given in Definition \ref{def:F_i}.

In this section, we use Theorem \ref{thm:Main Theorem} to show that such a solution exists for some values of the parameters $\rho_0$ and $ d_0$. We use the notation $k_0=\dfrac{\rho_0}{d_0}$, $\Delta_r=\dfrac{n_r}{n_r-1}$, and $\Delta_b=\dfrac{n_b}{n_b-1}$.

\begin{theorem}
\label{thm:existenceANDuniquenessOFsolution}
    Given $\rho_0,d_0>0$ and $n_b>n_r>1$. For $k_0=\dfrac{\rho_0}{d_0}<\dfrac{(\Delta_r-\Delta_b)^2}{4\Delta_r\Delta_b}$, the system \eqref{eq:system}, with $F=(F_1,F_2,F_3)$  given in Definition \ref{def:F_i},
    has a  unique solution $Z$ in a neighborhood of $t=0$, with $Z'(0)={\bf p}$ where   
\begin{equation}\label{eq:admissible p}
    {\bf p}=\left(\dfrac{\Delta_b-\Delta_r + \sqrt{(\Delta_r-\Delta_b)^2-4k_0\Delta_r\Delta_b}}{\Delta_r-\Delta_b + \sqrt{(\Delta_r-\Delta_b)^2-4k_0\Delta_r\Delta_b}},0,
\dfrac{\Delta_b+\Delta_r + \sqrt{(\Delta_r-\Delta_b)^2-4k_0\Delta_r\Delta_b}}{2}\rho_0\right).
\end{equation}

Additionally, the solution $Z(t)=(Z_1(t),Z_2(t),Z_3(t))$ satisfies the following symmetry properties
\begin{equation}\label{eq:symmetry of solution}
    Z_1(-t)=-Z_1(t),\qquad\qquad Z_2(-t)=Z_2(t),\qquad\qquad Z_3(-t)=-Z_3(t).
\end{equation}

\end{theorem}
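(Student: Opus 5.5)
The plan is to apply Theorem \ref{thm:Main Theorem} with $n=3$, $U$ and $F$ as in Definition \ref{def:F_i}, and $P=(0,{\bf 0},{\bf 0},{\bf p},{\bf p})$. Since $F$ is analytic on $U$ and $P\in U$, it remains to check four things: that $F(P)={\bf 0}$, that $|p_1|<1$, that $\nabla_\zeta F(P)$ is invertible, and the spectral radius condition. The symmetry \eqref{eq:symmetry of solution} will then come from the uniqueness part of that theorem.

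\emph{Step 1: $F(P)={\bf 0}$ and the formula for ${\bf p}$.} At $P$ we have $u=(\rho_0,0)$ for both colors. From \eqref{eq:F3}, $F_3(P)=p_2$, which forces $p_2=0$, so the tilde functions are evaluated at $v=(0,p_3)$. Using \eqref{eqs:identities for tilde} and writing $\frac{n-1}{n}=\Delta^{-1}$, the equation $F_1(P)=0$ reads
\[
\rho_0+d_0\Bigl(1-\tfrac{p_3}{\Delta_r\rho_0}\Bigr)=p_1\Bigl(\rho_0+d_0\Bigl(1-\tfrac{p_3}{\Delta_b\rho_0}\Bigr)\Bigr).
\]
For $F_2(P)=0$, \eqref{eqs:identity at 0} gives $M_{r1}=M_{b1}=0$ and $\Lambda_{r,b}=\Delta_{r,b}^{-1}$. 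Only the $\widetilde M$ terms survive, so
\[
\Delta_r\Bigl(1-\tfrac{p_3}{\Delta_r\rho_0}\Bigr)=p_1\Delta_b\Bigl(1-\tfrac{p_3}{\Delta_b\rho_0}\Bigr).
\]
I will eliminate $p_1$ between these two equations. This should give a quadratic in $p_3/\rho_0$ whose discriminant is $(\Delta_r-\Delta_b)^2-4k_0\Delta_r\Delta_b$, which is positive by the hypothesis on $k_0$. Taking the root with the $+$ sign gives the $p_3$ in \eqref{eq:admissible p}, and back-substitution gives $p_1$. Next I will check $|p_1|<1$ directly. Since $\Delta_r>\Delta_b>1$, the denominator of $p_1$ is positive and exceeds the absolute value of the numerator $\Delta_b-\Delta_r+\sqrt{\cdot}$, because $\sqrt{\cdot}<\Delta_r-\Delta_b$. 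The same estimate also shows $p_1\neq0$ when $k_0>0$, which is needed later for Theorem \ref{thm:converse}.

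\emph{Step 2: the linearization.} I will compute $\nabla_\zeta F(P)$ and $\nabla_\xi F(P)$ by differentiating \eqref{eq:aux with tilde} at $t=0$, $u=(\rho_0,0)$, $v=(0,p_3)$.
\begin{itemize}
\item The row of $F_3$ is $(0,1,0)$ in $\zeta$ and zero in $\xi$.
\item No component of $F$ depends on $\xi_1$, so the first column of $\nabla_\xi F(P)$ vanishes.
\item The $\zeta_1$ column has entries $-\widetilde F_{b1}(P)$ and the analogous $F_2$ coefficient, and these are nonzero by the computation in part $v)$ of Theorem \ref{thm:converse}.
\item The $\zeta_3$ column has entries $-d_0/(\Delta_r\rho_0)$ in $F_1$ and $-\Delta_b^{-1}\Delta_r^{-1}\rho_0^{-1}$ (times the constant from $\widetilde M_{r1}$) in $F_2$. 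Here the $\widetilde\Lambda$ terms contribute nothing, because they are multiplied by $M_{r1}$ or $M_{b1}$, which vanish at $P$.
\item The $\xi$ entries are $p_1$ times the corresponding $b$-quantities.
\end{itemize}
Expanding along the row $(0,1,0)$, invertibility of $\nabla_\zeta F(P)$ reduces to a $2\times2$ determinant in the $(\zeta_1,\zeta_3)$ variables. I expect this determinant to be a nonzero multiple of the derivative of the quadratic from Step 1 at the chosen root, hence of $\sqrt{(\Delta_r-\Delta_b)^2-4k_0\Delta_r\Delta_b}\neq 0$.

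\emph{Step 3: the spectral radius, which I expect to be the main obstacle.} Let $\mathcal M=-[\nabla_\zeta F(P)]^{-1}\nabla_\xi F(P)$. Because the first column of $\nabla_\xi F(P)$ is zero, $0$ is an eigenvalue of $\mathcal M$ and the rest of the spectrum comes from the lower-right $2\times2$ block. Because the row of $F_3$ in $\nabla_\xi F(P)$ is zero, the $\zeta_2$-component of $\mathcal M$ applied to any vector is zero. I expect this to collapse the spectrum to $\{0,0,\mu\}$, where $\mu$ is a single explicit number proportional to $p_1$. The proportionality factor should be a ratio such as $p_1\Delta_b/\Delta_r$, or a similar expression in $p_1$, $k_0$, $\Delta_r$, $\Delta_b$. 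I will first try to prove $|\mu|<1$ from the explicit form of $p_1$ and $\Delta_r>\Delta_b$. This is the step where the choice of the root (the $+$ sign) should matter.

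\emph{Step 4: symmetry.} Define $Y(t)=(-Z_1(-t),Z_2(-t),-Z_3(-t))$. Then $Y(0)={\bf 0}$, $Y'(0)=(p_1,-p_2,p_3)={\bf p}$ since $p_2=0$, and $Y_1(Y_1(t))=-Z_1(Z_1(-t))$. Using the parity relations \eqref{eq:symmetry nontilde} and \eqref{eq:symmetry tilde} with $u_2=Z_3$ and $v_1=Z_2'$, which change sign under $t\mapsto -t$, I will check the following: $F_1$ evaluated along $Y$ at $t$ equals $-F_1$ along $Z$ at $-t$, $F_2$ picks up a sign similarly, and $F_3$ is preserved up to sign. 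So $Y$ also solves \eqref{eq:system}. By the uniqueness in Theorem \ref{thm:Main Theorem}, $Y=Z$, which is \eqref{eq:symmetry of solution}.
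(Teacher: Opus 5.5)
Your plan is the paper's plan. You verify the hypotheses of Theorem~\ref{thm:Main Theorem} at $P=(0,{\bf 0},{\bf 0},{\bf p},{\bf p})$, then get the symmetry by applying uniqueness to $(-Z_1(-t),Z_2(-t),-Z_3(-t))$. Steps 1 and 4 are essentially right, with two small corrections:
\begin{enumerate}
\item In Step 1, $|p_1|<1$ comes from the square root being \emph{strictly positive}, which is where the strict inequality on $k_0$ is used. The bound $\sqrt{\cdot}<\Delta_r-\Delta_b$ only fixes the sign of the numerator and gives $p_1\neq 0$.
\item In Step 4, along $Y$ at $t$ the expressions $F_1$ and $F_2$ equal their values along $Z$ at $-t$ with \emph{no} sign change, since each product in $F_2$ is even under the parity relations. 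Only $F_3$ flips sign, which is harmless.
\end{enumerate}
Your structural reduction in Step 3 is also correct. The $F_3$ row of $\nabla_\zeta F(P)$ is $(0,1,0)$, so the second row of its inverse is $(0,0,1)$. Since the $F_3$ row and the $\xi_1$ column of $\nabla_\xi F(P)$ vanish, the spectrum of $\mathcal M$ is $\{0,0,\mu\}$ with $\mu=\mathcal M_{33}$.

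The gap is that hypotheses (2) and (3), which carry the whole proof, are only predicted, and both predictions are wrong in form. Write $x=p_3/\rho_0$. Expanding along the $F_3$ row, and using the quadratic in the form $k_0\Delta_r\Delta_b=(x-\Delta_r)(\Delta_b-x)$, gives
\[
\det\nabla_\zeta F(P)=\frac{(\Delta_b-\Delta_r)(\Delta_b-x)-k_0\Delta_r\Delta_b}{k_0\Delta_r^2\Delta_b^2}=\frac{(\Delta_b-x)^2}{k_0\Delta_r^2\Delta_b^2}.
\]
This is not a multiple of the square root; it stays nonzero even at the critical value of $k_0$. It is nonzero because $x-\Delta_b=\frac12\bigl(\Delta_r-\Delta_b+\sqrt{(\Delta_r-\Delta_b)^2-4k_0\Delta_r\Delta_b}\bigr)>0$.

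Solving the $2\times 2$ system for the $\zeta_3$-component gives $\mu=-\dfrac{k_0\Delta_r\Delta_b}{(\Delta_b-x)^2}\,p_1$. From $F_2(P)=0$ you have $p_1=\dfrac{x-\Delta_r}{x-\Delta_b}$. Combining this with the same identity yields $\mu=p_1^2$.

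This identity is the missing idea. It reduces the spectral condition to $|p_1|^2<1$, which you already have from Step 1. So the choice of root enters only through $|p_1|<1$, not through a separate estimate on $\mu$; this is exactly the computation the paper carries out. As written, your Step 3 is an unverified expectation, and the factor of the type $p_1\Delta_b/\Delta_r$ you anticipate is not what comes out. Hypothesis (3) of Theorem~\ref{thm:Main Theorem} is therefore not established.
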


\begin{lemma}\label{lm:system at 0}
   The system
    \begin{equation}\label{eq:system at 0}
        F(0,{\bf0}, {\bf 0}, {\bf p
    },{\bf p})={\bf 0}
    \end{equation}
has a solution ${\bf p}=(p_1,p_2,p_3)$ if and only if $k_0\leq \dfrac{(\Delta_r-\Delta_b)^2}{4\Delta_r\Delta_b}$. 
\end{lemma}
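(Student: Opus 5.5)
The plan is to evaluate $F(0,{\bf 0},{\bf 0},{\bf p},{\bf p})$ explicitly, using the values of the auxiliary functions at $t=0$, $u=(\rho_0,0)$ recorded in \eqref{eqs:identity at 0} and \eqref{eqs:identities for tilde}. This reduces \eqref{eq:system at 0} to an algebraic system in $(p_1,p_2,p_3)$, which collapses to a single quadratic equation.

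\textbf{Reducing the three equations.} Since $\alpha=\beta={\bf 0}$, every auxiliary function is evaluated at $t=0$ (note $\alpha_1=0$ as well) and $u=(\rho_0,0)$, with $v=(\zeta_2,\zeta_3)=(\xi_2,\xi_3)=(p_2,p_3)$.
\begin{itemize}
\item The third equation \eqref{eq:F3} gives $p_2=0$. Hence $v=(0,p_3)$, and \eqref{eqs:identities for tilde} applies.
\item Put $s=p_3/\rho_0$ and recall $\Delta_r=\frac{n_r}{n_r-1}$. Then $\widetilde M_{r1}=1-s/\Delta_r$ and $\widetilde F_{r1}=\rho_0+d_0(1-s/\Delta_r)$, with the analogous formulas for $b$.
\item From \eqref{eqs:identity at 0}, $M_{r1}=M_{b1}=0$, $\Lambda_r=1/\Delta_r$ and $\Lambda_b=1/\Delta_b$. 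Also $\widetilde\Lambda_r=\widetilde\Lambda_b=0$.
\end{itemize}
Substituting into \eqref{eq:F2} and \eqref{eq:F1}, and dividing the latter by $d_0$, gives
\begin{equation*}
\frac{1-s/\Delta_r}{\Delta_b}=p_1\,\frac{1-s/\Delta_b}{\Delta_r},\qquad k_0+1-\frac{s}{\Delta_r}=p_1\Big(k_0+1-\frac{s}{\Delta_b}\Big).
\end{equation*}
Thus \eqref{eq:system at 0} is equivalent to $p_2=0$ together with these two equations in the unknowns $(p_1,s)$.

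\textbf{Eliminating $p_1$.} Since $n_b>n_r>1$, we have $\Delta_r>\Delta_b$. If $s=\Delta_b$, the first equation forces $s=\Delta_r$, which is impossible. Hence $s\neq\Delta_b$, and the first equation gives $p_1=(\Delta_r-s)/(\Delta_b-s)$. Substituting this into the second equation and clearing denominators yields
\begin{equation*}
(k_0+1)(\Delta_b-\Delta_r)-s\,\frac{\Delta_b^2-\Delta_r^2}{\Delta_r\Delta_b}+s^2\,\frac{\Delta_b-\Delta_r}{\Delta_r\Delta_b}=0.
\end{equation*}
Multiplying by the nonzero factor $\Delta_r\Delta_b/(\Delta_b-\Delta_r)$ gives
\begin{equation*}
s^2-(\Delta_r+\Delta_b)s+(k_0+1)\Delta_r\Delta_b=0.
\end{equation*}
Its discriminant is $(\Delta_r+\Delta_b)^2-4(k_0+1)\Delta_r\Delta_b=(\Delta_r-\Delta_b)^2-4k_0\Delta_r\Delta_b$.

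\textbf{Equivalence.} If a solution ${\bf p}$ exists, then $s$ is a real root of this quadratic, so the discriminant is nonnegative. This is exactly $k_0\le\frac{(\Delta_r-\Delta_b)^2}{4\Delta_r\Delta_b}$.

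Conversely, if the discriminant is nonnegative, take a real root $s$. The root cannot equal $\Delta_b$: plugging $s=\Delta_b$ into the quadratic gives $k_0\Delta_r\Delta_b\neq 0$. So $p_1=(\Delta_r-s)/(\Delta_b-s)$ is well defined, and ${\bf p}=(p_1,0,s\rho_0)$ solves \eqref{eq:system at 0}, since every step above is reversible.

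Taking the root with the $+$ sign reproduces $p_3$ in \eqref{eq:admissible p}. A short rationalization shows that $p_1$ then matches the first component of \eqref{eq:admissible p}; this form is needed later.

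The main obstacle is bookkeeping rather than ideas: correctly reading off which arguments of $\widetilde F_{b1}$, $\Lambda_b$, etc.\ are evaluated at $t=0$, and checking that the identities \eqref{eqs:identities for tilde} apply once $p_2=0$ has been established. The only delicate algebraic point is ruling out the degenerate case $s=\Delta_b$ before dividing.
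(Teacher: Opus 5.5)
Your proposal is correct and follows the paper's proof. Both arguments get $p_2=0$ from $F_3$, derive the same two algebraic equations from the values at $t=0$, eliminate $p_1$ to reach the quadratic \eqref{eq:quadratic-equation}, read off the discriminant, and exclude $p_3/\rho_0=\Delta_b$ via $k_0\Delta_r\Delta_b\neq 0$. The only difference is cosmetic: you solve the $F_2$-equation for $p_1$ and substitute, whereas the paper multiplies that equation by $\rho_0+d_0\bigl(1-\frac{1}{\Delta_b}\frac{p_3}{\rho_0}\bigr)$ and uses the $F_1$-equation; your version has the small merit of making explicit the reversibility needed for the ``if'' direction, which the paper leaves implicit.
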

\begin{proof}
From \eqref{eq:F3}, $F_3(0,{\bf 0},{\bf 0}, {\bf p}, {\bf p})=0$ if and only if $p_2=0$.
 Then, using \eqref{eq:F1},\eqref{eq:F2},\eqref{eqs:identity at 0}, and \eqref{eqs:identities for tilde}, $F_1(0,{\bf 0},{\bf 0}, {\bf p}, {\bf p})= F_2(0,{\bf 0},{\bf 0}, {\bf p}, {\bf p})=0$ implies
\begin{align}
      \label{eq:algebraic system 1} \rho_0+d_0\left(1-\frac{1}{\Delta_r}\frac{p_3}{\rho_0}\right)-p_1\left(\rho_0+d_0\left(1-\frac{1}{\Delta_b}\frac{p_3}{\rho_0}\right)\right)=0\\
    \label{eq:algebraic system 2} \frac{1}{\Delta_b}\left(1-\frac{1}{\Delta_r}\frac{p_3}{\rho_0}\right)-\frac{p_1}{\Delta_r}\left(1-\frac{1}{\Delta_b}\frac{p_3}{\rho_0}\right) = 0.
\end{align} 
 Multiplying \eqref{eq:algebraic system 2} by $\rho_0+d_0\left(1-\dfrac{1}{\Delta_b}\dfrac{p_3}{\rho_0}\right)$ and using \eqref{eq:algebraic system 1}, we get
\begin{align*}
0&=\dfrac{1}{\Delta_b}\left(1-\dfrac{1}{\Delta_r}\dfrac{p_3}{\rho_0}\right)\left(\rho_0+d_0\left(1-\frac{1}{\Delta_b}\frac{p_3}{\rho_0}\right)\right)-\dfrac{1}{\Delta_r}\left(1-\dfrac{1}{\Delta_b}\dfrac{p_3}{\rho_0}\right)   \left(\rho_0+d_0\left(1-\frac{1}{\Delta_r}\frac{p_3}{\rho_0}\right)\right)\\
     &=\rho_0\left(\dfrac{1}{\Delta_b}-\dfrac{1}{\Delta_r}\right)+d_0\left(\dfrac{1}{\Delta_b}-\dfrac{1}{\Delta_r}\right)\left(1-\left(\dfrac{1}{\Delta_b}+\dfrac{1}{\Delta_r}\right)\dfrac{p_3}{\rho_0}+\dfrac{1}{\Delta_r\Delta_b}\left(\dfrac{p_3}{\rho_0}\right)^2\right)
\end{align*} 
Dividing by $d_0\left(\dfrac{1}{\Delta_b}-\dfrac{1}{\Delta_r}\right)\dfrac{1}{\Delta_r\Delta_b}$, we obtain the following quadratic equation in $\dfrac{p_3}{\rho_0}$
\begin{equation}\label{eq:quadratic-equation}
\left(\dfrac{p_3}{\rho_0}\right)^2-(\Delta_r+\Delta_b)\left(\dfrac{p_3}{\rho_0}\right)+\Delta_r\Delta_b(k_0+1)=0.
\end{equation}
 The discriminant of \eqref{eq:quadratic-equation}
 is $$\delta=(\Delta_r+\Delta_b)^2-4\Delta_r\Delta_b(k_0+1)=(\Delta_r-\Delta_b)^2-4k_0\Delta_r\Delta_b,$$ which is non-negative if and only if $k_0\leq \dfrac{(\Delta_r-\Delta_b)^2}{4\Delta_r\Delta_b}$. 
 
 For such $k_0$, the solutions are
$$\dfrac{p_3}{\rho_0}=\dfrac{\Delta_r+\Delta_b \pm \sqrt{(\Delta_r-\Delta_b)^2-4k_0\Delta_r\Delta_b}}{2}.$$
Notice that, since $k_0\Delta_r\Delta_b>0$,
\begin{equation}\label{eq:p3notDeltab}
   \dfrac{p_3}{\rho_0}-\Delta_b=\dfrac{\Delta_r-\Delta_b\pm \sqrt{(\Delta_r-\Delta_b)^2-4k_0\Delta_r\Delta_b}}{2}\neq 0.
\end{equation}
 Replacing \eqref{eq:p3notDeltab} in \eqref{eq:algebraic system 2}, we get 
\begin{equation}\label{eq:p1explicit}
    p_1=\dfrac{\dfrac{p_3}{\rho_0}-\Delta_r}{\dfrac{p_3}{\rho_0}-\Delta_b}=\dfrac{\Delta_b-\Delta_r \pm \sqrt{(\Delta_r-\Delta_b)^2-4k_0\Delta_r\Delta_b}}{\Delta_r-\Delta_b \pm \sqrt{(\Delta_r-\Delta_b)^2-4k_0\Delta_r\Delta_b}}.
\end{equation}
\end{proof}
\begin{remark}\label{rmk:admissibility of p}
From Theorems \ref{thm:Main Theorem} and \ref{thm:converse}, we are interested in a solution ${\bf p}=(p_1,p_2,p_3)$ to \eqref{eq:system at 0} satisfying \eqref{eq:p1}. From Lemma \ref{lm:system at 0}, notice the following
\begin{itemize}
\item For $k_0>\dfrac{(\Delta_r-\Delta_b)^2}{4\Delta_r\Delta_b}$, \eqref{eq:system at 0} has no real solution, and so, from Remark \ref{rmk:necessary condition}, the system \eqref{eq:system} has no solution.
\item For $k_0=\dfrac{(\Delta_r-\Delta_b)^2}{4\Delta_r\Delta_b}$, the solution to \eqref{eq:system at 0} is ${\bf p}=\left(-1,0,\dfrac{\Delta_b+\Delta_r}{2}\rho_0\right)$ which violates \eqref{eq:p1}.     
\item  For $k_0<\dfrac{(\Delta_r-\Delta_b)^2}{4\Delta_r\Delta_b}$, \eqref{eq:system at 0} has two disctinct solutions. Since $n_b>n_r>1$, $\Delta_b<\Delta_r$. Hence, only solution \eqref{eq:admissible p} satisfies \eqref{eq:p1}.
\end{itemize}
\end{remark}
\subsubsection{Proof of Theorem  \ref{thm:existenceANDuniquenessOFsolution}} Knowing $k_0=\dfrac{\rho_0}{d_0}<\dfrac{(\Delta_r-\Delta_b)^2}{4\Delta_r\Delta_b}$, let $P=(0,{\bf 0,0,p,p})$ with ${\bf p}=(p_1,p_2,p_3)$ given in \eqref{eq:admissible p}. We will show that the assumptions of Theorem \ref{thm:Main Theorem} are satisfied. Recall the expression of $F_i$ in Definition \ref{def:F_i}.

From Remark \ref{rmk:admissibility of p}, $F(P)={\bf 0}$ and $|p_1|<1$.

We calculate 
$\nabla_{\zeta
}F(P)$. Using \eqref{eqs:identity at 0} and \eqref{eqs:identities for tilde},
\begin{align*}
\dfrac{\partial F_1}{\partial  \zeta_1}(P)&=-\widetilde{F}_{b1}(0,\rho_0,0,0,p_3)=-\rho_0-d_0\left(1-\dfrac{1}{\Delta_b}\dfrac{p_3}{\rho_0}\right)\\
\dfrac{\partial F_2}{\partial \zeta_1}(P)&=M_{r1}(0,\rho_0,0)\widetilde{\Lambda}_{b}(0,\rho_0,0,0,p_3)-\widetilde M_{b1}(0,\rho_0,0,0,p_3)\Lambda_r(0,\rho_0,0)=-\dfrac{1}{\Delta_r}\left(1-\dfrac{1}{\Delta_b}\dfrac{p_3}{\rho_0}\right)\\
\dfrac{\partial F_3}{\partial \zeta_1}(P)&=0.
\end{align*}

From the expression of $\widetilde{M}_{r1}$ in \eqref{eq:aux with tilde}, 
$\dfrac{\partial\widetilde{M}_{r1}}{\partial v_1}(0,\rho_0,0,0,p_3)=0$. Hence, using \eqref{eqs:identity at 0} and \eqref{eq:aux with tilde},
\begin{align*}
\dfrac{\partial F_1}{\partial \zeta_2}(P)&=\dfrac{\partial \widetilde{ F}_{r1}}{\partial v_1}(0,\rho_0,0,0,p_3)=\dfrac{\partial \widetilde{M}_{r1}}{\partial v_1}(0,\rho_0,0,0,p_3)D_r(0,\rho_0,0)+M_{r1}(0,\rho_0,0)\dfrac{\partial {\widetilde D}_r}{\partial v_1}(0,\rho_0,0,p_3)=0\\
\dfrac{\partial F_2}{\partial \zeta_2}(P)&=\dfrac{\partial \widetilde{M}_{r1}}{\partial v_1}(0,\rho_0,0,0,p_3)\Lambda_b(0,\rho_0,0)-M_{b1}(0,\rho_0,0)\dfrac{\partial \widetilde{\Lambda}_{r}}{\partial v_1}(0,\rho_0,0,0,p_3)=0\\
\dfrac{\partial F_3}{\partial \zeta_2}(P)&=1.
\end{align*}

From 
\eqref{eqs:identity at 0} and \eqref{eq:aux with tilde}, we have  $\dfrac{\partial \widetilde{M}_{r1}}{\partial v_2}(0,\rho_0,0,0,p_3)=\dfrac{A_r(\rho_0,0)}{n_r}=\dfrac{-1}{\rho_0\Delta_r}$. Again, using \eqref{eqs:identity at 0} and \eqref{eq:aux with tilde},
\begin{align*}
    \dfrac{\partial F_1}{\partial \zeta_3}(P)&=\dfrac{\partial \widetilde{ F}_{r1}}{\partial v_2}(0,\rho_0,0,0,p_3)=\dfrac{\partial \widetilde{M}_{r1}}{\partial v_2}(0,\rho_0,0,0,p_3)D_r(0,\rho_0,0)+M_{r1}(0,\rho_0,0)\dfrac{\partial \widetilde{D}_r}{\partial v_2}(0,\rho_0,0,p_3)=-\dfrac{d_0}{\rho_0\Delta_r }\\
    \dfrac{\partial F_2}{\partial \zeta_3}(P)&=\dfrac{\partial \widetilde{M}_{r1}}{\partial v_2}(0,\rho_0,0,0,p_3)\Lambda_b(0,\rho_0,0)-M_{b1}(0,\rho_0,0)\dfrac{\partial \widetilde{\Lambda}_{r}}{\partial v_2}(0,\rho_0,0,0,p_3)=-\dfrac{1}{\rho_0\Delta_r\Delta_b}\\
    \dfrac{\partial F_3}{\partial \zeta_3}(P)&=0.
\end{align*}
We conclude that
\begin{equation}\label{eq:nabla zeta F}
    \nabla_{\zeta}F(P)=\begin{pmatrix} -\rho_0-d_0\left(1-\dfrac{1}{\Delta_b}\dfrac{p_3}{\rho_0}\right) & 0 & -\dfrac{d_0}{\rho_0\Delta_r}\\
-\dfrac{1}{\Delta_r}\left(1-\dfrac{1}{\Delta_b}\dfrac{p_3}{\rho_0}\right) & 0 & -\dfrac{1}{\rho_0\Delta_r\Delta_b}\\ 0 & 1 & 0\end{pmatrix}.
\end{equation}
Expanding over the last row, 
\begin{equation*}
    \begin{aligned}
    \det \left(\nabla_{\zeta}F(P)\right) &=-\dfrac{1}{\rho_0\Delta_r\Delta_b}\left(\rho_0+d_0\left(1-\dfrac{1}{\Delta_b}\dfrac{p_3}{\rho_0}\right)\right)  +\dfrac{d_0}{\rho_0\Delta_r}\left( \dfrac{1}{\Delta_r}\left(1-\dfrac{1}{\Delta_b}\dfrac{p_3}{\rho_0}\right)\right) \\
    &=-\dfrac{1}{k_0(\Delta_r\Delta_b)^2}\left[k_0\Delta_r\Delta_b+\Delta_r\left(\Delta_b-\dfrac{p_3}{\rho_0}\right)  - \Delta_b\left(\Delta_b-\dfrac{p_3}{\rho_0}\right)\right].
    \end{aligned}
\end{equation*}
From \eqref{eq:quadratic-equation}, 
\begin{equation}\label{eq:fact-from-quadratic}
k_0\Delta_r\Delta_b +\Delta_r\left(\Delta_b-\frac{p_3}{\rho_0}\right)= \frac{p_3}{\rho_0}\left(\Delta_b-\frac{p_3}{\rho_0}\right).
\end{equation}
Hence, 
\begin{equation}\label{eq:determinant}
\det (\nabla_{\zeta}F(P)) = \dfrac{-1}{k_0(\Delta_r\Delta_b)^2}\left[\frac{p_3}{\rho_0}\left(\Delta_b-\frac{p_3}{\rho_0}\right)  -\Delta_b\left(\Delta_b-\dfrac{p_3}{\rho_0}\right)\right]
=\dfrac{\left(\Delta_b-\dfrac{p_3}{p_0}\right)^2}{k_0(\Delta_r\Delta_b)^2}.   \end{equation}         
From \eqref{eq:p3notDeltab}, $\det (\nabla_{\zeta}F(P))\neq 0$, and so we conclude that $\nabla_{\zeta}F(P)$ is invertible.

It remains to show that the spectral radius of the matrix $\mathcal M:=-[\nabla_{\zeta}F(P)]^{-1}\nabla_{\xi}F(P)$ is smaller than $1$.
From \eqref{eq:nabla zeta F} and \eqref{eq:determinant},
\begin{equation}\label{eq:inverse zeta}
[\nabla_{\zeta}F(P)]^{-1}=\dfrac{k_0(\Delta_r\Delta_b)^2}{\left(\Delta_b-\frac{p_3}{\rho_0}\right)^2}\begin{pmatrix}
    \dfrac{1}{\rho_0\Delta_r\Delta_b} & -\dfrac{d_0}{\rho_0\Delta_r} & 0\\
    0 & 0 & \dfrac{\left(\Delta_b-\frac{p_3}{\rho_0}\right)^2}{k_0\left(\Delta_b\Delta_r\right)^2}\\
    -\dfrac{1}{\Delta_r}\left(1-\dfrac{1}{\Delta_b}\dfrac{p_3}{\rho_0} \right) & \rho_0+d_0\left(1-\dfrac{1}{\Delta_b}\dfrac{p_3}{\rho_0} \right) & 0
\end{pmatrix}.
\end{equation}

Next, we evaluate $\nabla_{\xi}F(P)$. Notice that $F_1,F_2,$ and $F_3$ are independent of $\xi_1$, then $\dfrac{\partial F_1}{\partial \xi_1}=\dfrac{\partial F_2}{\partial \xi_1}=\dfrac{\partial F_3}{\partial \xi_1}=0$.
Moreover, similar to the calculation of $\nabla_{\zeta}F(P)$, we get
\begin{align*}
\dfrac{\partial  F_1}{\partial \xi_2}(P)&=-p_1\dfrac{\partial \widetilde{F}_{b1}}{\partial v_1}(0,\rho_0,0,0,p_3)=0\\
\dfrac{\partial F_2}{\partial \xi_2}(P)&=p_1M_{r1}(0,\rho_0,0)\dfrac{\partial \widetilde{\Lambda}_b}{\partial v_1}(0,\rho_0,0,p_3)-p_1\dfrac{\partial \widetilde{M}_{b1}}{\partial v_1}(0,\rho_0,0,0,p_3)\Lambda_r(0,\rho_0,0)=0\\
\dfrac{\partial F_3}{\partial \xi_2}(P)&=0.
\end{align*}
Finally,
\begin{align*}
    \dfrac{\partial F_1}{\partial \xi_3}(P)&=-p_1\dfrac{\partial \widetilde{F}_{b1}}{\partial v_2}(0,\rho_0,0,0,p_3)=\dfrac{d_0}{\rho_0\Delta_b}p_1\\
    \dfrac{\partial F_2}{\partial \xi_3}(P)&=p_1M_{r_1}(0,\rho_0,0)\dfrac{\partial \widetilde{\Lambda}_b}{\partial v_2}(0,\rho_0,0,0,p_3)-p_1\dfrac{\partial \widetilde{M}_{b1}}{\partial v_2}(0,\rho_0,0,0,p_3)\Lambda_r(0,\rho_0,0)=\dfrac{1}{\rho_0\Delta_r\Delta_b}p_1\\
    \dfrac{\partial F_3}{\partial \xi_3}(P)&=0.
\end{align*}
Hence
\begin{equation}\label{eq:nabla xi}
    \nabla_{\xi}F(P)=\begin{pmatrix} 0 & 0 & \dfrac{d_0}{\rho_0\Delta_b}\\
0 & 0 & \dfrac{1}{\rho_0\Delta_r\Delta_b}\\ 0 & 0 & 0\end{pmatrix} p_1.
\end{equation}

Combining \eqref{eq:inverse zeta} and \eqref{eq:nabla xi}, we obtain that
\begin{equation*}
        \mathcal{M}=-[\nabla_{\zeta}F(P)]^{-1}\nabla_{\xi}F(P) =-\dfrac{k_0(\Delta_r\Delta_b)^2}{\left(\Delta_b-\frac{p_3}{\rho_0}\right)^2}\begin{pmatrix}
        0 & 0 & \dfrac{d_0\left(\Delta_r-\Delta_b\right)}{(\rho_0\Delta_r\Delta_b)^2}\\
        0 &0 & 0\\
        0 & 0 & \dfrac{1}{\Delta_r\Delta_b}
    \end{pmatrix}p_1.
\end{equation*}
The spectral radius for the matrix above is 
\begin{equation*}
    R_{\mathcal{M}}=\left|-\dfrac{k_0(\Delta_r\Delta_b)^2}{\left(\Delta_b-\frac{p_3}{\rho_0}\right)^2}\dfrac{1}{\Delta_r\Delta_b}p_1\right| = \left|\dfrac{k_0\Delta_r\Delta_b}{\left(\Delta_b-\frac{p_3}{\rho_0}\right)^2}p_1\right|.
    \end{equation*}
By making the two consecutive substitutions \eqref{eq:fact-from-quadratic} and \eqref{eq:p1explicit}
\begin{equation*}
    R_{\mathcal{M}}= \left|\dfrac{\left(\frac{p_3}{\rho_0}-\Delta_r\right)\left(\Delta_b-\frac{p_3}{\rho_0}\right)}{\left(\Delta_b-\frac{p_3}{\rho_0}\right)^2}p_1\right| = \left|-\dfrac{\left(\frac{p_3}{\rho_0}-\Delta_r\right)}{\left(\frac{p_3}{\rho_0}-\Delta_b\right)}p_1\right|  =\left|p_1\right|^2 < 1.
\end{equation*}

From Theorem \ref{thm:Main Theorem}, we conclude that  for $k_0<\dfrac{(\Delta_r-\Delta_b)^2}{4\Delta_r\Delta_b}$ there exists  a unique solution $Z$ to \eqref{eq:system} satisfying $Z'(0)={\bf p}$.

It remains to show the symmetry properties \eqref{eq:symmetry of solution}. Define $\hat{Z}(t) = (\hat{Z}_1(t), \hat{Z}_2(t), \hat{Z}_3(t)) = (-Z_1(-t), Z_2(-t), -Z_3(-t))$, and notice that $$\hat{Z}'(t) = (\hat{Z}'_1(t), \hat{Z}'_2(t), \hat{Z}'_3(t)) = (Z_1'(-t), -Z_2'(-t), Z_3'(-t)).$$ 
We have $\hat Z(0)=Z(0)={\bf 0}$ and $\hat Z'(0)=Z'(0)={\bf p}$.
We will show that $\hat{Z}(t)$ also satisfies \eqref{eq:F1explicit}, \eqref{eq:F2explicit}, and \eqref{eq:F3explicit} which by uniqueness implies that $\hat Z=Z.$

In fact, from the formula of $F_1$ in \eqref{eq:F1}, 
{\footnotesize
\begin{align*}
F_1&(t, \hat{Z}(t), \hat{Z}(\hat{Z}_1(t)), \hat{Z}'(t), \hat{Z}'(\hat{Z}_1(t)))\\ 
    &=\tilde {F}_{r1} (t, Z_2(-t) + \rho_0, -Z_3(-t), -Z_2'(-t), Z_3'(-t)) -Z_1'(-t) \tilde {F}_{b1} (-Z_1(-t), Z_2(Z_1(-t))+ \rho_0, -Z_3(Z_1(-t)), -Z_2'(Z_1(-t)), Z_3'(Z_1(-t))).
\end{align*}
}
Hence, the symmetry properties of ${\tilde F}_{r1}$ (and ${\tilde F}_{b1}$) in  \eqref{eq:symmetry tilde} and equation \eqref{eq:F1explicit} give 
{\footnotesize
\begin{align*}
F_1&(t, \hat{Z}(t), \hat{Z}(\hat{Z}_1(t)), \hat{Z}'(t), \hat{Z}'(\hat{Z}_1(t)))
\\
&=\tilde {F}_{r1} (-t, Z_2(-t)+ \rho_0, Z_3(-t), Z_2'(-t), Z_3'(-t)) -Z_1'(-t) \tilde {F}_{b1} (Z_1(-t), Z_2(Z_1(-t))+ \rho_0, Z_3(Z_1(-t)), Z_2'(Z_1(-t)), Z_3'(Z_1(-t))) \\
    &= F_1(-t, Z(-t), Z(Z_1(-t)), Z'(-t), Z'(Z_1(-t))) \\
    &=0.
\end{align*}
}
Similarly, from \eqref{eq:F2}, the symmetry properties \eqref{eq:symmetry nontilde} and \eqref{eq:symmetry tilde}, and equation \eqref{eq:F2explicit}, we get 
\begin{equation*}
    F_2(t, \hat{Z}(t), \hat{Z}(\hat{Z}_1(t)), \hat{Z}'(t), \hat{Z}'(\hat{Z}_1(t))) = F_2(-t, Z(-t), Z(Z_1(-t)), Z'(-t), Z'(Z_1(-t))) = 0.
\end{equation*}
Finally, from \eqref{eq:F3} and \eqref{eq:F3explicit}, {\small
\begin{align*}
            F_3(t, \hat{Z}(t), \hat{Z}(\hat{Z}_1(t)), \hat{Z}'(t), \hat{Z}'(\hat{Z}_1(t))) = -Z_2'(-t)+Z_3(-t)
            =-F_3(-t,Z(-t),Z(Z_1(-t)),Z'(-t),Z'(Z_1(-t)))=0
\end{align*}
}
\qed

We proved in Theorem \ref{thm:existenceANDuniquenessOFsolution} the existence of a solution to system \eqref{eq:system}, with $Z'(0)={\bf p}$ given in \eqref{eq:admissible p}. From Remark \ref{rmk:admissibility of p}, $p_1$ satisfies \eqref{eq:p1}. Then, from Theorem \ref{thm:converse} and Section \ref{subsub:converse}, given $\rho_0, d_0>0$ such that $k_0=\dfrac{\rho_0}{d_0}<\dfrac{(\Delta_r-\Delta_b)^2}{4\Delta_r\Delta_b}$,
there exists a unique lens $(L,S)$ that solves the dichromatic problem in 2D with $(0,\rho_0)\in L$, and $(0,\rho_0+d_0)\in S$.




\subsection{The three dimensional case}\label{subsec:3D}
We now exploit the established solution symmetry to solve the dichromatic problem in three dimensions. Let $Z:=Z(t)$, $t\in D=[-t_0,t_0]$, be the unique solution found in Theorem \ref{thm:existenceANDuniquenessOFsolution}. Let $(L,S)$ be the corresponding lens as constructed in Section \ref{subsub:converse}. The lower face $L$ is parametrized by $\rho(t){\bf x}(t)$ with $\rho(t)=Z_2(t)+\rho_0$. From \eqref{eq:symmetry of solution}, $\rho(-t)=\rho(t)$, and so 
$\rho(-t){\bf x}(-t)=(-\rho(t)\sin t, \rho(t)\cos t)$, which implies that $L$ is symmetric with respect to the vertical axis.

Using  \eqref{eq:symmetry nontilde}, the upper face $S=S_b$ parametrized by 
${\bf f}_b(t)=(f_{b1}(t), f_{b2}(t))={\bf F}_b(t,\rho(t),\rho'(t))$ satisfies the following
    \begin{align*}
        f_{b1}(-t)&=F_{b1}(-t,\rho(-t), \rho'(-t))= F_{b1}(-t, \rho(t),-\rho'(t))=-F_{b1}(t,\rho(t),\rho'(t))
        =-f_{b1}(t)\\
        f_{b2}(-t)&=F_{b2}(-t,\rho(-t), \rho'(-t))= F_{b2}(-t, \rho(t),-\rho'(t))=F_{b2}(t,\rho(t),\rho'(t))
        =f_{b2}(t).
    \end{align*}
Hence, the upper face $S$ is also symmetric with respect to the vertical axis.  Similarly, $S_r$ parametrized by ${\bf f}_r(t)=(f_{r1}(t),f_{r2}(t))=F_r(t,\rho(t),\rho'(t))$ is symmetric with respect to the vertical axis. Letting $\varphi(t)=Z_1(t)$, we have that ${\bf f}_{r}(t)={\bf f}_b(\varphi(t))$.

Revolving both faces of the lens $L$ and $S$ around the vertical axis, we get two surfaces of revolution $\widetilde L$ and $\widetilde S$ in three dimensions such that the lens $(\widetilde L,\widetilde S) $ refracts the cone of dichromatic rays emitted from the origin into the vertical direction 
$(0,0,1)$, see Figure \ref{fig:3D}.

More precisely, for $\theta\in \mathbb R$ and ${\bf a} = (a_1,a_2)\in \mathbb R^2$, define the axial rotation 
$${\bf a}_\theta=R_{\theta}{\bf a}=(a_1 \cos \theta, a_1 \sin \theta, a_2).$$
Let $\Omega\subseteq S^2$ be as follows
$$\Omega=\{{\bf x}_{\theta}(t)=R_{\theta}(\sin t,\cos t):t\in D, \theta\in [0,\pi]\}.$$
We parametrize the lower face of the lens $\widetilde L$ by $\rho(t){\bf x}_{\theta}(t)$ with ${\bf x}_{\theta}(t)\in \Omega$.
Fix $\theta\in [0,\pi]$, let $\Pi_{\theta}$ be the plane containing the $z$-axis (vertical axis) and forming an angle $\theta$ with the $x$-$z$ plane. For each $t$, the normal to ${\widetilde L}$ at the point $\rho(t){\bf x}_{\theta}(t)$ is contained in the plane $\Pi_{\theta}$. Then, by Snell's law, the ray emitted from the origin with frequency $\omega_r$ and direction ${\bf x}_\theta(t)$ is refracted by $\widetilde L$  into the direction $({\bf m}_r)_{\theta}(t)=R_{\theta}{\bf m}_r(t)\in \Pi_{\theta}$; ${\bf m}_r$ given in \eqref{eq:mrM r}. Let $\widetilde S_r$ be the surface parametrized by the vector $$({\bf f}_{r})_{\theta}(t)=\rho(t){\bf x}_{\theta}(t)+d_r(t)({\bf m}_{r})_{\theta}(t)=R_{\theta}{\bf f}_r(t).$$ Again, by Snell's law at the point $({\bf f}_r)_{\theta}(t)$, the ray with direction $({\bf m}_{r})_\theta(t)$ is refracted by $\widetilde S _r$ into the direction $(0,0,1)\in \Pi_{\theta}$. 
Similarly, we define the surface $\widetilde S_b$ parametrized by
$({\bf f}_{b})_{\theta}(t)=R_{\theta}{\bf f}_b(t)$. The lens $(\widetilde L,\widetilde S_b)$ refracts rays with frequency $\omega_b$ and unit direction in $\Omega$ into the direction $(0,0,1)$. 

Notice that
$$({\bf f}_{r})_{\theta}(t)=R_{\theta}{\bf f}_r(t)=R_{\theta}{\bf f}_b(\varphi(t))=({\bf f}_{b})_{\theta}(\varphi(t)).$$ 
Letting $\widetilde S=\widetilde S_b$, we conclude that the lens $(\widetilde L,\widetilde S)$ solves the dichromatic problem for the frequencies $\omega_r$ and $\omega_b$ in three dimensions.

\subsection{Numerical Implementation and results}\label{subsec:numerical}
We implement a Picard fixed-point iteration to construct the solution to the system \eqref{eq:system} with $F_i$ given in Definition \ref{def:F_i}, and we generate the corresponding lens\footnote{The code can be found in the following github repository \url{https://github.com/TarekM678/SRC-Optics-Numerical-Implementation}.}.

For that, we first write the system in the form \eqref{eq:simplified system}. In fact, from \eqref{eq:F3explicit}, $Z_2'(t)=Z_3(t)$, obtaining $H_2$. Replacing in \eqref{eq:F1explicit} and \eqref{eq:F2explicit}, we solve for $Z_1'(t)$ and $Z_3'(t)$ to obtain $H_1$ and $H_3$. We then construct the sequence $$Z^0(t)=t{\bf p}\qquad Z^{n+1}(t)=\int_0^t H(s,Z^{n}(s),{Z^{n}}(Z_1^{n}(s)),({Z^{n}})'(Z_1^{n}(s)))\, ds,$$
where ${\bf p}$ is given in \eqref{eq:admissible p}. The convergence of the iterations for particular values of $k_0$ and for $t_0$ small enough follows from Theorems \ref{thm:Main Theorem} and \ref{thm:existenceANDuniquenessOFsolution}. 

For the numerical implementation, we need to find $Z^n$ at multiple values of $t\in[-t_0,t_0]$ with $t_0$ chosen. We interpolate and evaluate the composition $Z^{n} (Z^{n}_{1}(t))$ and $({Z^n})' (Z^{n}_{1}(t))$ at each iteration. This allows us to evaluate $H$ in the integrand and find $Z^{n+1}(t)$. 

Choosing a small tolerance $\delta_0$, we define $Z(t) := Z^{n_0}(t)$ such that $$\max_{t}\frac{|Z^{n_0}(t) -Z^{n_0-1}(t) |}{|Z^{n_0-1}(t)|}< \delta_0.$$ We then take $\rho(t)=Z_2(t)+\rho_0$, and implement Snell's law to calculate ${\bf m_r}$ and ${\bf m}_b$. Now, we construct the corresponding faces ${\bf f}_r$ and ${\bf f}_b$.   
For $n_r = 1.14$ and $ n_b =1.866$, we obtain  Figure \ref{figure code}.

\begin{figure}[H]
    \centering
\includegraphics[width=5cm, height=10cm]{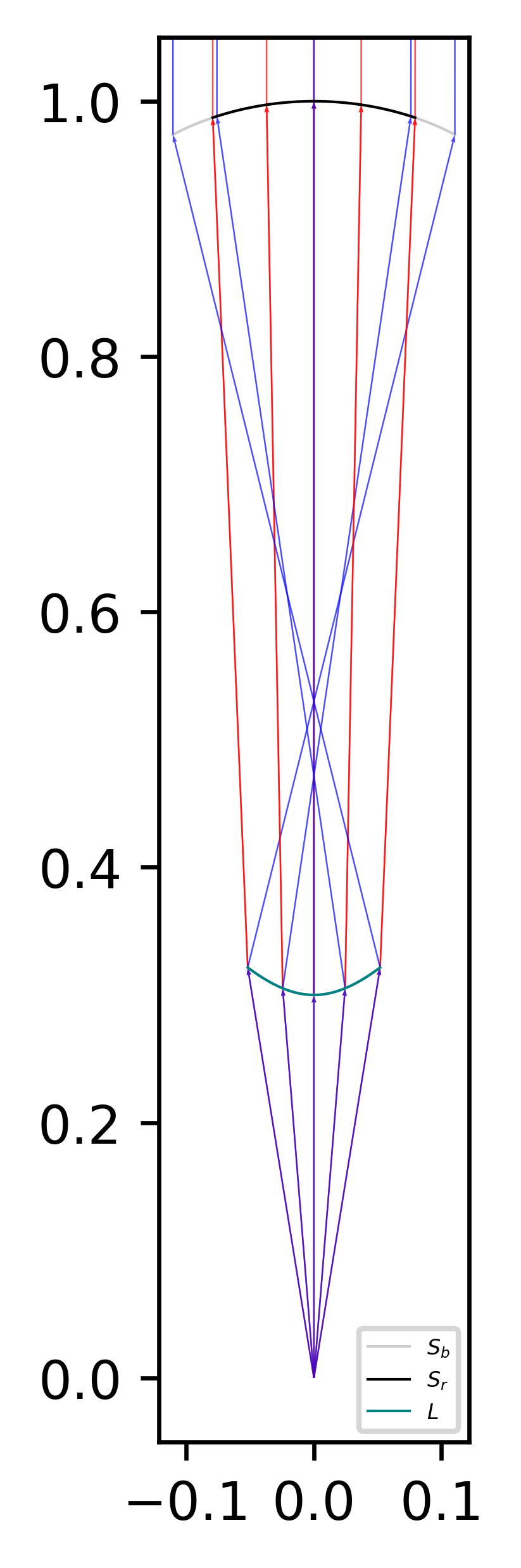}
    \caption{Lens solution with $\rho_0 =0.3 , d_0=0.7,t_0 = 0.16, \delta_0 =10^{-12}$}
\label{figure code}
\end{figure}
We also implemented a Runge-Kutta algorithm (RK4) to solve the system of FDEs which allowed faster convergence, see Table \ref{Table}
\begin{table}[H]
{\small  \begin{tabular}{|c|c|c|c|c|c|}
        \hline
        Schemes & { \footnotesize $\rho_0 = 0.5, d_0= 1.2$} & {\footnotesize $\rho_0 = 0.5, d_0= 1.5$} & { \footnotesize$\rho_0 = 0.5, d_0= 2$} & { \footnotesize$\rho_0 = 0.5, d_0= 2.5$}\\
        \hline
        Picard Lindel\"of & $8.4s$ & $7.8s$ & $8.1s$ & $9.2s$ \\
        \hline
        RK4 & $6.4s$ & $6.7s$ & $6.8s$ & $6.6s$ \\
        \hline
        \hline
    \end{tabular}}
    \caption{Run-times for RK4 and Picard Lindel\"of with $\delta_0 = 10^{-10}$. 
    }\label{Table}
\end{table}

We notice that for large angle $t_0$ the lens will suffer from self intersections or cusps emphasizing that our solution could only be local, see Figure \ref{fig:cusp}.
\begin{figure}[H]
    \centering
\includegraphics[width=0.5\linewidth]{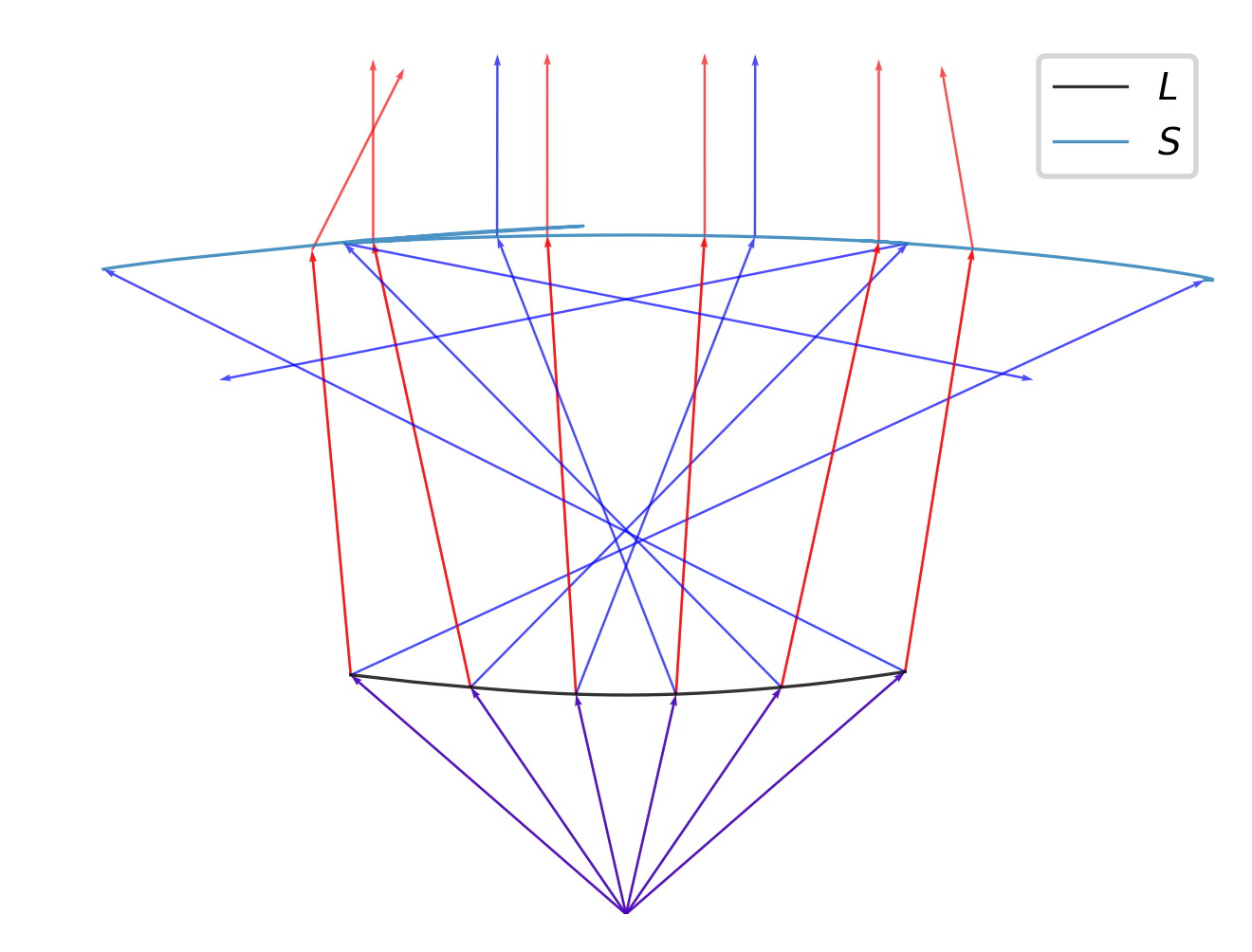}
    \caption{Obtained lens for $\rho_0=0.1,d_0 = 0.21$, and $t_0=0.2$. 
    }
    \label{fig:cusp}
\end{figure}
\noindent We then investigated, using the Picard scheme, the largest $t_0$ for which the algorithm converges for different values of $k_0$, and noticed that as $k_0$ approaches the limiting value $\dfrac{(\Delta_r-\Delta_b)^2}{4\Delta_r\Delta_b}$, $t_0$ becomes close to zero, see Figure \ref{fig:k_0different}.
\begin{figure}[H]
    \centering
    \includegraphics[width=0.5\linewidth]{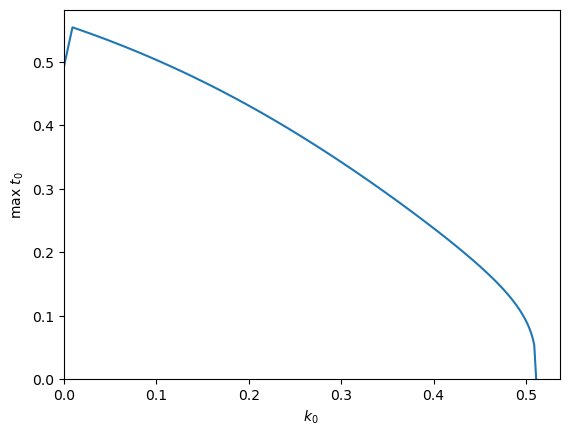}
    \caption{Maximal angle $t_0$ with respect to $k_0$.
    }
    \label{fig:k_0different}
\end{figure}
\noindent For $k_0$ at the limiting value, the Picard scheme was unable to construct a lens for $t_0 > 3.218*10^{-5}$, whereas the RK4 scheme was able to construct a lens for $t_0 = 0.04$.

Finally, we revolve the 2D lens in Figure \ref{figure code} to obtain the three dimensional lens in Figure \ref{fig:3D}.
\begin{figure}[H]
    \centering
    \includegraphics[width=0.42\linewidth]{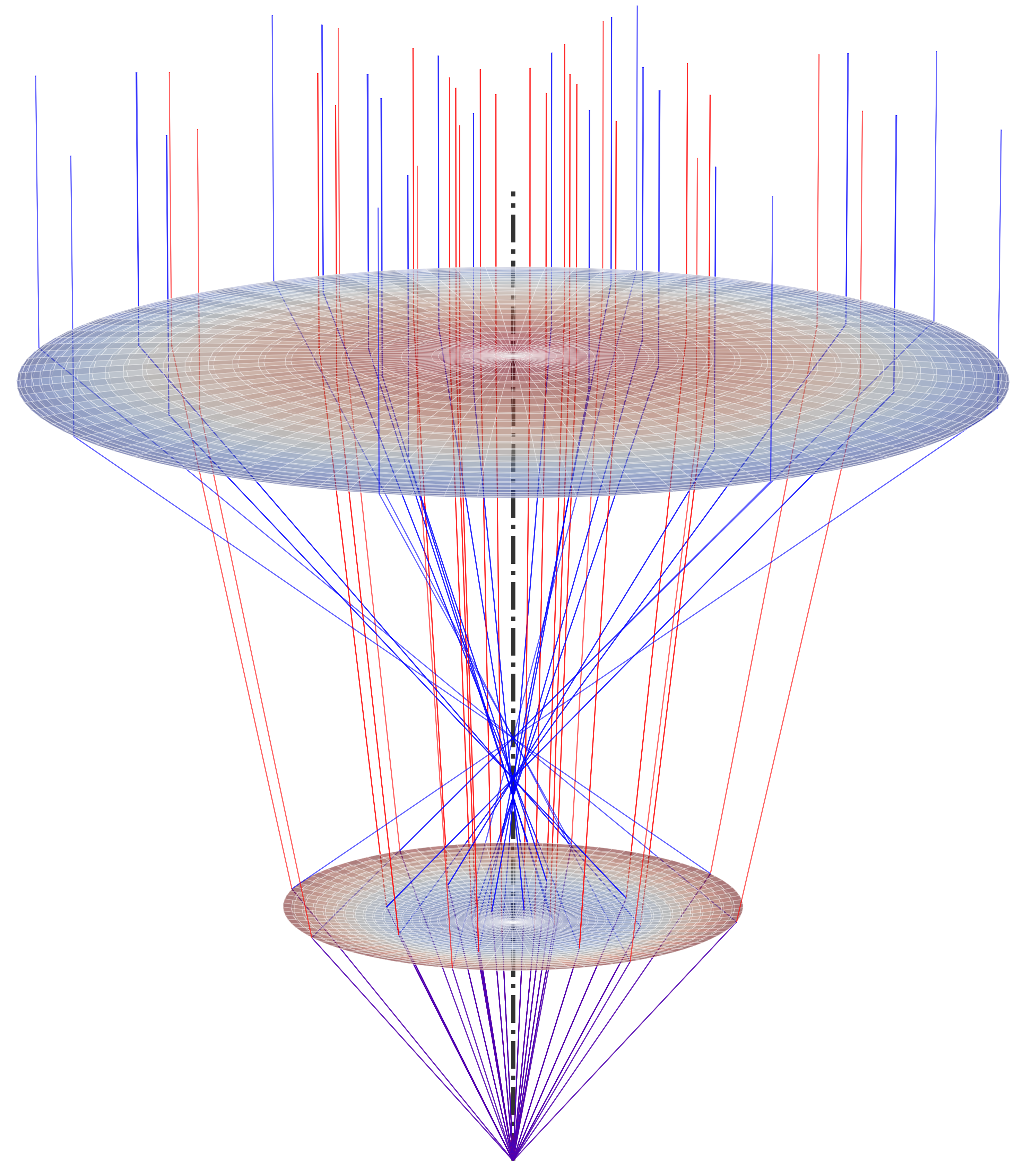}
    \caption{3D lens solution with $\rho_0 =0.3 , d_0=0.7,t_0 = 0.16, \delta_0 =10^{-12}$.}
    \label{fig:3D}
\end{figure}

\section*{Acknowledgments}
All authors were partially supported by the University Research Board, Grants 104631 from the American University of Beirut, and by the CAMS-Department of Mathematics Summer Research Program.  

\bibliographystyle{unsrt}
\bibliography{References}

@book{BornWolf1999,
  author    = {Max Born and Emil Wolf},
  title     = {Principles of Optics: Electromagnetic Theory of Propagation, Interference and Diffraction of Light},
  edition   = {7},
  publisher = {Cambridge University Press},
  address   = {Cambridge},
  year      = {1999},
  isbn      = {978-0521642224},
}

@article{CaffarelliOliker2008,
  author  = {Luis A. Caffarelli and Vladimir I. Oliker},
  title   = {Weak Solutions of One Inverse Problem in Geometric Optics},
  journal = {Journal of Mathematical Sciences},
  volume  = {154},
  number  = {1},
  pages   = {39--49},
  year    = {2008},
  month   = oct,
  doi     = {10.1007/s10958-008-9152-x},
}

@article{Chen2018,
  author  = {Wei Ting Chen and Alexander Y. Zhu and Vyshakh Sanjeev and Mohammadreza Khorasaninejad and Zhujun Shi and Eric Lee and Federico Capasso},
  title   = {A Broadband Achromatic Metalens for Focusing and Imaging in the Visible},
  journal = {Nature Nanotechnology},
  volume  = {13},
  number  = {3},
  pages   = {220--226},
  year    = {2018},
  doi     = {10.1038/s41565-017-0034-6},
}

@article{FriedmanMcLeod1987,
  author  = {Avner Friedman and John B. McLeod},
  title   = {Optimal Design of an Optical Lens},
  journal = {Archive for Rational Mechanics and Analysis},
  volume  = {99},
  number  = {2},
  pages   = {147--164},
  year    = {1987},
  doi     = {10.1007/BF00275875},
}

@article{guan1998monge,
  title={On a {M}onge-{A}mp{\`e}re equation arising in geometric optics},
  author={Guan, Pengfei and Wang, Xu-Jia},
  journal={Journal of Differential Geometry},
  volume={48},
  number={2},
  pages={205--223},
  year={1998},
  publisher={Lehigh University}
}

@article{Gutierrez:14,
author = {Cristian E. Guti\'{e}rrez and Ahmad Sabra},
journal = {J. Opt. Soc. Am. A},
number = {4},
pages = {891--899},
publisher = {Optica Publishing Group},
title = {Design of pairs of reflectors},
volume = {31},
month = {Apr},
year = {2014},
doi = {10.1364/JOSAA.31.000891},
}

@article{SIAMGUTSAB,
author = {Guti\'{e}rrez, Cristian E. and Sabra, Ahmad},
title = {Aspherical Lens Design and Imaging},
journal = {SIAM Journal on Imaging Sciences},
volume = {9},
number = {1},
pages = {386-411},
year = {2016},
doi = {10.1137/15M1030807},
}

@article{Gutierrez:18,
author = {Cristian E. Guti\'{e}rrez and Luca Pallucchini},
journal = {J. Opt. Soc. Am. A},
number = {9},
pages = {1523--1531},
publisher = {Optica Publishing Group},
title = {Reflection and refraction problems for metasurfaces related to {M}onge-{A}mp{\`e}re equations},
volume = {35},
month = {Sep},
year = {2018},
doi = {10.1364/JOSAA.35.001523},

}

@article{Gutierrez2013,
  author    = {Guti\'{e}rrez, Cristian E.},
  title     = {Aspherical Lens Design},
  journal   = {Journal of the Optical Society of America A},
  year      = {2013},
  volume    = {30},
  number    = {9},
  pages     = {1719},
  doi       = {10.1364/JOSAA.30.001719},
  publisher = {Optica Publishing Group}
}

@article{GutierrezHuang2014,
  author  = {Cristian E. Guti{\'e}rrez and Qingbo Huang},
  title   = {The Near Field Refractor},
  journal = {Annales de l'Institut Henri Poincar\'e C, Analyse Non Lin\'eaire},
  volume  = {31},
  number  = {4},
  pages   = {655--684},
  year    = {2014},
  doi     = {10.1016/j.anihpc.2013.07.001},
}

@article{GutierrezSabra2021,
  author  = {Cristian E. Guti{\'e}rrez and Ahmad Sabra},
  title   = {Chromatic Aberration in Metalenses},
  journal = {Advances in Applied Mathematics},
  volume  = {124},
  pages   = {102134},
  year    = {2021},
  doi     = {10.1016/j.aam.2020.102134},
}

@incollection{GutierrezSabra2020,
  author    = {Cristian E. Guti{\'e}rrez and Ahmad E. Sabra},
  title     = {On the Existence of Dichromatic Single Element Lenses},
  booktitle = {Advances in Harmonic Analysis and Partial Differential Equations},
  series    = {Contemporary Mathematics},
  volume    = {748},
  pages      = {99--145},
  publisher = {American Mathematical Society},
  address   = {Providence, RI},
  year      = {2020},
  doi       = {10.1090/conm/748/15057},
}

@article{GutierrezHuang2009,
  author  = {Cristian E. Guti{\'e}rrez and Qingbo Huang},
  title   = {The Refractor Problem in Reshaping Light Beams},
  journal = {Archive for Rational Mechanics and Analysis},
  volume  = {193},
  number  = {2},
  pages   = {423--443},
  year    = {2009},
  doi     = {10.1007/s00205-008-0165-x},
}

@book{HaleLunel1993,
  author    = {Jack K. Hale and Sjoerd M. Verduyn Lunel},
  title     = {Introduction to Functional Differential Equations},
  series    = {Applied Mathematical Sciences},
  volume    = {99},
  publisher = {Springer},
  address   = {New York},
  year      = {1993},
  doi       = {10.1007/978-1-4612-4342-7},
}

@article{Hua2017,
  author  = {Hong Hua and Yuzuru Tsuboi and Yuki Yamanaka and Hiroyuki Oshika},
  title   = {Achromatic Doublet Intraocular Lens for Full Aberration Correction},
  journal = {Biomedical Optics Express},
  volume  = {8},
  number  = {7},
  pages   = {3240--3248},
  year    = {2017},
  doi     = {10.1364/BOE.8.003240},
  pmid    = {28663881},
  pmcid   = {PMC5480488},
}

@book{JenkinsWhite2001,
  author    = {Francis A. Jenkins and Harvey E. White},
  title     = {Fundamentals of Optics},
  edition   = {4},
  publisher = {McGraw-Hill},
  address   = {New York},
  year      = {2001},
  isbn      = {978-0072561913},
}

@article{MerigotThibert2021,
  author  = {Quentin M{\'e}rigot and Boris Thibert},
  title   = {Mirrors, Lenses and {M}onge-{A}mp{\`e}re Equations},
  journal = {European Mathematical Society Magazine},
  volume  = {120},
  year    = {2021},
  pages   = {16--28},
  doi     = {10.4171/MAG/21},
}

@inproceedings{Oliker2019,
  author    = {Vladimir Oliker},
  title     = {Freeform Optics for Illumination and Supporting Quadric Method (SQM)},
  booktitle = {Optical Design and Fabrication 2019 (Freeform, OFT)},
  series    = {OSA Technical Digest},
  publisher = {Optica Publishing Group},
  year      = {2019},
  pages      = {FT1B.6},
  doi       = {10.1364/FREEFORM.2019.FT1B.6},
}

@article{rogers1988existence,
  title={Existence, uniqueness, and construction of the solution of a system of ordinary functional differential equations, with application to the design of perfectly focusing symmetric lenses},
  author={Rogers, Joel  CW},
  journal={IMA journal of applied mathematics},
  volume={41},
  number={2},
  pages={105--134},
  year={1988},
  publisher={Oxford University Press}
}

@book{Serre,
    author = {Denis Serre},
    title ={Matrices: Theory and applications},
    publisher = {Springer},
    year = {2010}
}

@article{SunChuTien2009,
  author  = {Wen-Shing Sun and Chien-Hsun Chu and Chuen-Lin Tien},
  title   = {Well-Chosen Method for an Optimal Design of Doublet Lens Design},
  journal = {Optics Express},
  volume  = {17},
  number  = {3},
  pages   = {1414--1428},
  year    = {2009},
  doi     = {10.1364/OE.17.001414},
}

@article{van1992lens,
  title={A lens focusing light at two different wavelengths},
  author={Van-Brunt, Bruce and Ockendon, John R},
  journal={Journal of mathematical analysis and applications},
  volume={165},
  number={1},
  pages={156--179},
  year={1992},
  publisher={Elsevier}
}

@article{van1994mathematical,
  title={Mathematical possibility of certain systems in geometrical optics},
  author={Van-Brunt, Bruce},
  journal={Journal of the Optical Society of America A},
  volume={11},
  number={11},
  pages={2905--2914},
  year={1994},
  publisher={Optical Society of America}
}

@inproceedings{Verma2026,
  author    = {Sanjana Verma and Martijn J. H. Anthonissen and Jan H. M. ten Thije Boonkkamp and Wilbert L. IJzerman and Lisa Kusch},
  title     = {Inverse Methods for Freeform Imaging Design},
  booktitle = {AIP Conference Proceedings},
  volume     = {3489},
  number     = {1},
  pages      = {280005},
  year       = {2026},
  editor     = {Theodore E. Simos and Charalambos Tsitouras},
  publisher  = {AIP Publishing},
  doi        = {10.1063/5.0328468},
}
\end{document}